\definecolor{mygreen}{rgb}{0,0.6,0}
\definecolor{mygray}{rgb}{0.5,0.5,0.5}
\definecolor{mymauve}{rgb}{0.58,0,0.82}
\definecolor{backcolour}{rgb}{0.95,0.95,0.92}
\tiny\color{mygray}, %
\newtheorem{theorem}{Theorem}[section]
\newtheorem*{mainthm*}{\hypertarget{thm:DRclosure}{\bf Main theorem}}
\newtheorem{theorem+definition}{Theorem\,+\,Definition}[theorem]
\numberwithin{case}{theorem}
\newtheorem{claim}[theorem]{Claim}
\newtheorem*{claim*}{Claim}
\newtheorem{corollary}[theorem]{Corollary}
\newtheorem{question}[theorem]{Question}
\newtheorem{lemma}[theorem]{Lemma}
\newtheorem{convention}[theorem]{Convention}
\newtheorem{proposition}[theorem]{Proposition}
\newtheorem{def+prop}[theorem]{Definition\,+\,Proposition}
\theoremstyle{definition}
\newtheorem{definition}[theorem]{Definition}
\theoremstyle{remark}
\newtheorem{remark}[theorem]{Remark}
\DeclareMathOperator{\BIC}{BIC}
\DeclareMathOperator{\spc}{sp}
\DeclareMathOperator{\GL}{GL}
\DeclareMathOperator{\CC}{\mathbb{C}}
\DeclareMathOperator{\RR}{\mathbb{R}}
\DeclareMathOperator{\PP}{\mathbb{P}}
\DeclareMathOperator{\ZZ}{\mathbb{Z}}
\DeclareMathOperator{\rank}{rank}
\DeclareMathOperator{\Res}{Res}
\DeclareMathOperator{\mult}{mult}
\DeclareMathOperator{\id}{id}
\DeclareMathOperator{\Bl}{Bl}
\DeclareMathOperator{\res}{res}
\DeclareMathOperator{\ord}{ord}
\newcommand{\codim}{\operatorname{codim}}
\DeclareMathOperator{\ANN}{Ann}
\DeclareMathOperator{\abs}{abs}
\DeclareMathOperator{\Spec}{\operatorname{Spec}}
\newcommand\calV{\mathcal{V}}
\newcommand\calU{\mathcal{U}}
\newcommand\calO{\mathcal{O}}
\newcommand\calN{\mathcal{N}}
\newcommand\calE{\mathcal{E}}
\newcommand\calR{\mathcal{R}}
\newcommand\calI{\mathcal{I}}
\newcommand\calM{\mathcal{M}}
\newcommand\calH{\mathcal{H}}
\newcommand\calS{\mathcal{S}}
\newcommand\calP{\mathcal{P}}
\newcommand\calJ{\mathcal{J}}
\newcommand\Adm[1][\ramp]{\operatorname{Adm}_{#1} }
\newcommand\Hur[1][\ramp]{\operatorname{Hur}_{#1} }
\newcommand\Hurcl[1][\ramp]{\overline{\Hur[]}_{#1} }
\newcommand{\Mgn}[1][g,n]{\calM_{#1}}
\newcommand{\Mgnbar}[1][g,n]{ \overline\calM_{#1}}
\newcommand\rescl[1][i]{T_{\lfloor #1\rfloor}}
\newcommand{\cl}[1]{\overline{#1}}
\newcommand\DM{Deligne-Mumford\xspace}
\newcommand{\bd}[1][\pf]{D_{#1}}
\newcommand{\bdint}[1][\pf]{D^{\circ}_{#1}}
\newcommand\lG{\overline{\Gamma}}
\newcommand\lGh{\overline{\Lambda}}
\newcommand\sG{\Gamma}
\newcommand\lexp[1][i]{\ell_{#1}}
\newcommand\LG{\operatorname{LG}}
\newcommand\LGc[1][1]{\LG_1}
\newcommand\CH[1][*]{\operatorname{CH}^{#1}}
\newcommand\logCH[1][*]{\operatorname{logCH}^{#1}}
\newcommand\CHop[1][*]{\operatorname{CH}^{#1}_{\operatorname{op}}}
\newcommand\ch{\operatorname{ch}}
\newcommand\Rtr[1][*]{R^{#1}}
\newcommand\St[1][(\mu)]{\calH{#1}}
\newcommand\PSt[1][(\mu)]{\calP #1}
\newcommand{\PMSDS}[1][(\mu)]{\overline{\calP}#1}
\newcommand{\PMSDSdisc}[2]{\PP\Xi\Mgnbar[#1] #2}
\newcommand\taut{\xi}
\newcommand\pf[1][P]{\operatorname{#1}}
\newcommand\pfQ{\pf[Q]}
\newcommand\pfR{\pf[R]}
\newcommand\pfS{\pf[S]}
\newcommand\pfP{\pf[P]}
\newcommand\PF[1][(B)]{\Sigma #1}
\newcommand\PFtop[1][{(\pf[Q])}]{\mathcal{P}^{\top}#1}
\newcommand\PFi[2][(B)]{\Sigma^{#2}#1}
\DeclareMathOperator{\ev}{ev}
\newcommand\vb{\calV}
\tikzset{
  symbol/.style={
    draw=none,
    every to/.append style={
      edge node={node [sloped, allow upside down, auto=false]{$#1$}}}
  }
}
\newlength{\mylen}
\newcommand\undeg[1][{[i]}]{\overset{{#1}}{\leadsto}} 
\newcommand\degen{{\leadsto}}
\newcommand\coht[1][{[i]}]{V^{#1}_{\ramp}(X,\omega)}
\newcommand\ramp{\underline{\lambda}}
\newcommand\chp[3][{}]{P_{{#2}/{#3}}#1}
\newcommand\CHP{total Chern class\xspace}
\newcommand\CHPs{total Chern classes\xspace}
\newcommand\Ann[2][(\vb)]{\ANN_{#2} #1}
\newcommand\clAnn[2][(\vb)]{\cl{\ANN_{#2}#1}}
\newcommand\nb[2]{\calN_{#1/#2}}
\newcommand\ci[2][i]{c_{#1}(#2)}
\newcommand\cinb[3][i]{\ci[#1]{\nb{#2}{#3}}}
\newcommand\cinbe[3][i]{\overline{c}_{#1}(\nb{#2}{#3})}
\newcommand\divR[1][*]{\operatorname{divR}^{#1}}
\newcommand\tr[2][*]{\operatorname{R}^{#1}(#2)}
\newcommand\trd[2][*]{\operatorname{divR}^{#1}(#2)}
\newcommand\bdexl[2][\pfP,\pfQ]{A_{#1}^{[#2]}}
\newcommand\Exc[1][\ramp]{\operatorname{Exc}_{#1}}
\newcommand\Exccl[1][\ramp]{\overline{\operatorname{Exc}}_{#1}}
\newcommand\Excmu[1][(\mu)]{\operatorname{Exc}{#1}}
\newcommand\Excclmu[1][(\mu)]{\overline{\operatorname{Exc}}{#1}}
\newcommand\ptf[1]{\widetilde{#1}}
\numberwithin{equation}{section}
\newlength{\perspective}
\title{Tautological classes of strata of exact differentials}
\author{Frederik Benirschke}
\begin{document}

\begin{abstract}
Strata of exact differentials are moduli spaces for differentials on Riemann surfaces with vanishing absolute periods.
Our main result is that classes of closures of strata of exact differentials inside the moduli space of multi-scale differentials lie in the divisorial tautological ring. By relating exact differentials to rational functions  we obtain a new proof that classes of Hurwitz spaces are tautological and a new method for computations.

\end{abstract}
\maketitle
\setcounter{tocdepth}{1}
\tableofcontents
\section{Introduction}
\subsection{Motivation from algebraic geometry: Cycle classes of Hurwitz spaces}
A classical way of producing subvarieties of the moduli space of curves is using Hurwitz spaces.
The moduli space $\Hur$ of Hurwitz covers of $\PP^1$ parametrizes finite degree morphisms $f:X\to\PP^1$ with prescribed ramification.
A {\em ramification profile} for degree $d$ covers is a tuple $\ramp=
(\lambda_1,\ldots,\lambda_m)$ of ordered partitions of $d$.
The Hurwitz space $\Hur$ parametrizes branched covers $f:X\to\PP^1$ of degree $d$ branched over $m$ points $x_1,\ldots,x_m$ in $\PP^1$ with ramification described by $\lambda_i$ in the fiber over $x_i$, and no other branch points.
A modular compactification  $\Hurcl$ of Hurwitz spaces using admissible covers was first constructed by Harris and Mumford.
Forgetting the branched cover yields a morphism $\phi:\Hurcl\to\Mgnbar$ and pushing forward the fundamental class  produces natural {\em admissible cover cycles} 
$\Adm\in\CH(\Mgnbar)$.
The computation of admissible cover cycles has attracted a lot of research, see \cite{eisenbud-harris,faber-pandharipande,faber-pagani,schmitt-vanzelm} for a collection of results.
While in general no closed formulas for admissible cover cycles are known, it was shown in \cite{faber-pandharipande} that admissible cover cycles lie in the tautological ring $\tr{\Mgnbar}$.

In order to study admissible cover cycles we will now change our point of view and work with differential forms instead of rational functions. This perspective has appeared already in \cite{Fred,BDG}, where we constructed a smooth compactification of Hurwitz spaces using exact differentials.
Similar ideas were also used in \cite{mullane,sauvaget-thesis}.

Given a rational function $f:X\to \PP^1$ from a Riemann surface $X$ of genus $g$, the {\em associated exact differential} is  $df=f^*dz$ and we can recover $f$, up to additive constants, by integrating. 
The key observation is that exact differentials can be characterized by the vanishing of all absolute periods
\begin{equation}\label{eq:abper}
\int_{\gamma} df=0 \text{ for all } \gamma \in H_1(X\setminus P(df);\CC),
\end{equation}
where $P(df)\subseteq X$ is the set of poles of $df$.
\Cref{eq:abper} can be rephrased as lying in the zero locus of a section of a vector bundle over a suitable moduli space of differentials, which will allow us to perform intersection theory. We will now setup the moduli spaces in question.

\subsection{Strata of exact differentials}
The stratum $\St$ parametrizes tuples \[
(X,Q=(q_1,\ldots,q_n),\omega)\] of  marked Riemann surfaces $X$ together with a meromorphic differential $\omega$ vanishing to  order $\mu_i$ at $q_i$. Most of the time we only consider differentials up to rescaling, which are parametrized by the projectivized stratum $\PSt=\St/\CC^*$.

Given a ramification profile $\ramp$ we let $\mu$ be the partition recording the zero and pole orders of $df$.
For example if we consider  hyperelliptic curves the ramification profile is $\ramp=(2^{2g+2})$ and the associated partition is $\mu=(-3,1^{2g+1})$, where we used exponential notation to denote  repetitions.

The {\em stratum of  exact differentials}
\[
\begin{split}
\Exc[](\mu)&
=\left\{(X,\omega)\,:\, \int_{\gamma}\omega=0 \text{ for all }\gamma \in H_1(X\setminus P(\omega);\ZZ)\right\}\subseteq \PSt
\end{split}
\]
is either empty or a codimension $2g+|P(\omega)|-1$ subvariety of the projectivized stratum.

Differentials in $\Excmu$ correspond to rational functions with prescribed ramification multiplicities, but not necessarily with ramification profile $\ramp$, since so far we have no requirement on which ramification points lie in the same fiber. In \Cref{sec:prelim} we will construct the {\em stratum of $\ramp$-exact differentials} $\Exc\subset\Excmu$ consisting of exact differentials coming from rational functions with ramification profile $\ramp$.
In order to perform intersection theory  we will work on a compactification of the stratum $\PSt$.

Recently  in \cite{BCGGMsm} the authors construct the {\em moduli space of multi-scale differentials} $\PMSDS$, a smooth compactification of the stratum $\PSt$. We will recall its main features in \Cref{sec:localsystems}.
The boundary of $\PMSDS$ is a normal crossing divisor, which can be decomposed into the divisor $D_h$ consisting of irreducible curves with horizontal nodes and divisors $D_{\lG}$ parametrized by non-horizontal two level graphs. Similar to $\Mgnbar$, the moduli space of multi-scale differentials has a natural tautological ring $\tr{\PMSDS}$, which was first defined in \cite{CMZ}.
For example the ring $\tr{\PMSDS}$ contains all boundary divisors $\bd[\lG]$ for non-horizontal two-level graphs and also the first Chern class 
\[
\taut{}{}=c_1(\calO(-1))\in\CH[1](\PMSDS)
\] of the tautological line bundle, whose fiber over a twisted differential is the line generated by the differential.

Our main result is the computation of the class of the closure $\Excclmu$ in the Chow ring of $\PMSDS$.

\begin{theorem}\label{thm:main}Let $\mu$ be a partition of $2g-2$.
The class of the stratum of exact differentials $\Excclmu$ lies in the divisorial tautological ring $\trd{\PMSDS}\subseteq \CH(\PMSDS)$ generated by $\taut$ and $\bd[\lG]$ where $\lG$ is a non-horizontal boundary divisor.
Furthermore, there exists an explicit algorithm computing the class  $\Excclmu$.
The same is true for the class of $\ramp$-exact strata $\cl{\Exc}$ for any ramification profile $\ramp$.

\end{theorem}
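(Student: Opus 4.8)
The strategy is to realize the closure $\Excclmu$ as (the pushforward of) the zero locus of a section of a vector bundle over $\PMSDS$, and then apply an excess-intersection / localized-Chern-class argument to express this class in terms of tautological data. Concretely, over the open stratum $\PSt$ the condition \eqref{eq:abper} says that the period map $\omega \mapsto \bigl(\int_\gamma \omega\bigr)_{\gamma}$ vanishes; the periods of $\omega$ relative to $P(\omega)$ assemble into a section $s$ of a local system (tensored with the tautological line bundle to account for projectivization), namely the dual of $H_1(X \setminus P(\omega);\CC)$. The first task is therefore to extend this local system and the section $s$ across the boundary of $\PMSDS$. The natural candidate is the relative homology local system of the family of curves-minus-poles, but it does not extend as a local system across the boundary — the monodromy around the boundary divisors $\bd[\lG]$ is in general nontrivial (Dehn twists around vanishing cycles, plus the rescaling behavior on lower levels). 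So one must replace it by a coherent extension: a vector bundle $\vb$ on $\PMSDS$ whose restriction to $\PSt$ is $\bigl(H^1\bigr)^{\vee} \otimes \calO(-1)$ — this is essentially the package set up in \Cref{sec:localsystems} following \cite{BCGGMsm,CMZ}, and the main point of the preliminaries is that such a $\vb$ exists with controlled behavior along each $\bd[\lG]$.

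The second step is to analyze the section $s$ along the boundary. On a boundary divisor $\bd[\lG]$ associated to a two-level graph, a twisted differential is a collection of differentials on the level subcurves, and the period condition degenerates: the top-level periods must vanish identically, while the lower-level periods contribute only up to the scaling parameter, and horizontal-node residues impose their own linear conditions. Thus $s$ does \emph{not} extend to a nowhere-degenerate section, and $\bd[\lG]$ (or rather certain substrata of it) will appear with multiplicity in the degeneracy locus. The key step is to compute, for each non-horizontal two-level graph $\lG$, the order of vanishing of $s$ along $\bd[\lG]$, and to identify the induced section on $\bd[\lG]$ with the analogous period section for the strata on the two levels. This is an inductive structure: the boundary of $\Exccl$ should be built out of $\ramp'$-exact strata of lower complexity on the level subcurves, glued along residue conditions. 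Making this precise — in particular getting the vanishing orders and the combinatorics of which $\lG$ contribute — is where I expect the real work to be.

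With these two ingredients in place, the class $\Excclmu$ is computed by a standard formula: if $s$ vanished transversally, $[\Excclmu]$ would simply be the top Chern class $c_{2g + |P(\omega)| - 1}(\vb)$, which is tautological because $\vb$ is an iterated extension of $\calO(-1)$ by boundary-supported pieces, so its Chern classes lie in $\trd{\PMSDS}$. In the non-transverse situation one uses the residual-intersection / Vistoli-style excess formula: $[\Excclmu] = c_{\mathrm{top}}(\vb) - (\text{boundary correction terms})$, where each correction term is $\iota_{\lG *}$ of a class on $\bd[\lG]$ obtained from the top Chern class of the quotient bundle $\vb/(\text{the part of }s\text{ that still vanishes})$, paired with the excess normal bundle factors $\bd[\lG]$ — all of which are again in $\trd{}$. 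Iterating over the (finite) poset of boundary strata, ordered by codimension, produces the explicit algorithm: at each stage one subtracts a tautological boundary contribution and recurses on a lower-complexity exact stratum on each level, terminating because genus and number of marked points strictly drop. Finally, the $\ramp$-exact version $\cl{\Exc}$ is obtained by the same machinery applied to $\Exc \subset \Excmu$, where the extra constraint (which ramification points lie in a common fiber) is cut out by a further linear condition on periods — encoded by a sub-local-system — so the identical argument gives a tautological answer. The main obstacle, to repeat, is the boundary analysis of the section $s$: establishing that the degeneracy locus of the extended section is supported on boundary strata, with exactly computable multiplicities matching the inductive description of $\partial\Exccl$.
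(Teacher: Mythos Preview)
Your high-level strategy agrees with the paper's: extend the period-evaluation section to the Deligne extension $\cl{\vb}$ of the local system, analyze its zero locus on the boundary, and remove extraneous contributions recursively.  But there is a genuine gap in the step you flag as ``the main obstacle'', and your proposed fix (a direct residual/excess-intersection formula) is not strong enough.

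The zero locus of the extended section is not supported on full boundary divisors $\bd[\lG]$.  The paper shows (Proposition~\ref{prop:zerolocus}) that
\[
Z(\cl{\ev}_{\pf,i}) \;=\; \bdexl[\pf]{i} \;\cup\; \bigcup_{\lG\in \PFi[(\pf,i)]{1}} \bdexl[\pf,\lG]{i},
\]
where each $\bdexl[\pf,\lG]{i}$ is a \emph{proper} subvariety of (a thickening of) $\bd[\pf+\lG]$, cut out by the vanishing of the top-level periods of $\vb$.  These extraneous pieces have various codimensions, and the crucial point --- emphasized in the paper's outline --- is that they are generally \emph{not of the expected codimension}.  Your formula $[\Excclmu]=c_{\mathrm{top}}(\vb)-(\text{boundary corrections})$ with corrections given by excess-normal-bundle Chern classes presupposes exactly the regular situation that fails here: when the zero scheme has components of the wrong dimension, Fulton's excess/residual formulas do not let you simply subtract them off level by level.

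What the paper actually does is blow up.  One orders the extraneous loci $\bdexl[\pf,\lG_1]{i},\ldots,\bdexl[\pf,\lG_n]{i}$ compatibly with the partial order on $\BIC$, blows them up iteratively, and at each stage divides the pulled-back section by the equation of the new exceptional divisor.  After the final blowup the modified section has zero locus equal to the proper transform of $\bdexl[\pf]{i}$ alone, now of expected codimension, so its class is the top Chern class of an explicit twist of $\cl{\vb}$ (Proposition~\ref{prop:finalblowup}).  Pushing this down to $\PMSDS$ and controlling the total Chern classes of all the proper transforms --- in particular showing they stay in $\trd{}$ --- is the content of the Appendix on iterated blowups of systems of regular embeddings of Alexander stacks (some centers are non-reduced).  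This blowup machinery is the missing idea in your proposal; without it the recursion you sketch does not get off the ground, because the ``subtract and recurse'' step has no formula to subtract with.

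A smaller point: the Chern classes of $\cl{\vb}$ are not tautological because $\vb$ is ``an iterated extension of $\calO(-1)$ by boundary-supported pieces''; rather, they \emph{vanish} in Chow (Proposition~\ref{prop:vanishing}), ultimately by Mumford's computation $\ch(\calE)+\ch(\calE^*)=0$ for the Hodge bundle.
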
 
In particular the class of $\Excclmu$ behaves like the class of a complete intersection, although we do not expect $\Excclmu$ to be a complete intersection unless $g=0$.

We now circle back to the connection between rational functions and exact differentials.
The moduli space $\PMSDS$ comes with a forgetful map $\rho:\PMSDS\to \Mgnbar$ and by construction the pushforward of 
$[\Exccl]$ is the admissible cover cycle $\Adm$.

\begin{corollary}[\cite{faber-pandharipande}]\label{cor:taut}
 Admissible cover cycles $\Adm\in\CH(\Mgnbar)$ are tautological.
\end{corollary}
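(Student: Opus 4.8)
The plan is to derive \Cref{cor:taut} from \Cref{thm:main} together with the fact that the forgetful map $\rho\colon\PMSDS\to\Mgnbar$ carries tautological classes to tautological classes. As recalled above, the body of the paper identifies a generic $\ramp$-exact differential with an admissible cover of ramification profile $\ramp$, and this identification is arranged precisely so that $\rho_*[\Exccl]=\Adm$ in $\CH(\Mgnbar)$. Now \Cref{thm:main} asserts that $[\Exccl]$ lies in the divisorial tautological ring $\trd{\PMSDS}$, which is contained in the full tautological ring $\tr{\PMSDS}$ since both generators $\taut=c_1(\calO(-1))$ and $\bd[\lG]$ are tautological. Hence the corollary follows once we prove
\[
\rho_*\bigl(\trd{\PMSDS}\bigr)\subseteq\tr{\Mgnbar}.
\]

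To establish this inclusion I would argue on the two families of generators. By the projection formula it suffices to show that every monomial $\rho_*\!\left(\taut^{\,a}\prod_{j}\bd[\lG_j]\right)$ is tautological on $\Mgnbar$. Each boundary divisor $\bd[\lG]$ appearing among the generators is the image of a clutching morphism from a finite quotient of a fiber product of moduli spaces of multi-scale differentials indexed by the non-horizontal two-level graph $\lG$, and $\rho$ intertwines this morphism with the corresponding clutching map of $\Mgnbar$, whose target is a product of smaller moduli spaces of pointed curves. Using the self-intersection and normal bundle formulas for the $\bd[\lG]$ recalled in \Cref{sec:localsystems} and iterating over the boundary stratification, the computation of such a monomial reduces to computing the pushforward of a single power $\taut^{\,a}$ along the restriction of $\rho$ to one (possibly deep) boundary stratum, i.e.\ along a projectivized twisted Hodge bundle living over a blow-up of a product of $\Mgnbar$'s; and since the clutching and forgetful maps between moduli of curves preserve the tautological ring, nothing is lost at these steps.

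The heart of the matter — and the step I expect to be the main obstacle — is therefore that the pushforward of powers of $\taut$ along such a projectivized bundle is tautological. The point is that $\PMSDS$ is locally a subvariety of a projective bundle over a blow-up of $\Mgnbar$ along tautological centers, with $\taut$ restricting to the relative $\calO(-1)$; pushing forward powers of $\calO(-1)$ along a projective bundle produces Segre classes, which are polynomials in the Chern classes of the bundle, and that bundle is assembled from the Hodge bundle and its twists by boundary divisors, whose Chern classes are tautological on $\Mgnbar$ and remain so under the (tautological) blow-ups. Keeping careful track of these pushforwards through the intricate plumbing and blow-up structure of $\PMSDS$ is the real work; in practice it is cleanest to isolate $\rho_*\bigl(\tr{\PMSDS}\bigr)\subseteq\tr{\Mgnbar}$ as a separate lemma, invoking the description of $\PMSDS$ and of its tautological ring from \cite{BCGGMsm,CMZ}. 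Granting this, applying $\rho_*$ to the class produced by \Cref{thm:main} gives $\Adm=\rho_*[\Exccl]\in\tr{\Mgnbar}$, recovering the theorem of \cite{faber-pandharipande}.
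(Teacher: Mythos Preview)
Your overall strategy matches the paper's: deduce the corollary from \Cref{thm:main} together with the statement that $\rho_*$ sends tautological classes on $\PMSDS$ to tautological classes on $\Mgnbar$ (this is \Cref{prop:taut} in the paper), and then use $\rho_*[\Exccl]=\Adm$.

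Where you diverge is in the argument for $\rho_*\tr{\PMSDS}\subseteq\tr{\Mgnbar}$. You attack it via the \emph{multiplicative} generators $\taut,\bd[\lG]$ of $\trd{\PMSDS}$, trying to push forward monomials $\taut^a\prod_j\bd[\lG_j]$ by reducing to Segre classes of a projectivized twisted Hodge bundle over an iterated blow-up. This sketch is vague at the crucial point: $\PMSDS$ is not globally a projective bundle over a tautological blow-up of $\Mgnbar$, and carrying out the pushforward along your lines would essentially require re-deriving the structure theory of \cite{CMZ}. The paper instead uses the \emph{additive} generators of $\tr{\PMSDS}$ established in \cite{CMZ}, namely classes of the form $c_{\lG,*}p_{\lG}^*f_{\lG}^*\alpha$ with $\alpha$ a polynomial in $\psi$-classes. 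For such a generator the commutative roof diagram relating $(B_{\lG},D^s_{\lG},D_{\lG})$ to $(\Mgnbar[\sG],\Mgnbar)$ immediately gives $\rho_*c_{\lG,*}p_{\lG}^*f_{\lG}^*\alpha = \zeta_{\sG,*}\bigl(f_{\lG,*}p_{\lG,*}c_{\lG}^*(\cdots)\bigr)$, and the right-hand side is tautological because pushforwards of strata classes are tautological by \cite{universal-dr}. This bypasses all of the projective-bundle and blow-up bookkeeping you anticipate; the ``real work'' you flag has already been packaged into the additive presentation of $\tr{\PMSDS}$ and the tautologicality of strata classes.
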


\begin{proof}
The pushforward $\rho_*:\CH(\PMSDS)\to\CH(\Mgnbar)$ sends tautological classes to tautological classes (see \Cref{prop:taut}) and the statement now follows directly from \Cref{thm:main} for strata of $\ramp$-exact differentials.
\end{proof}

The proof of \Cref{thm:main} is explicit and together with the algorithm for the computation of the pushforward $\rho_*$ in \cite{CMZ}, we obtain an explicit approach for calculating admissible cover cycles.

The algorithm for the computation of strata of $\ramp$-exact differentials has been implemented in \verb|sage| using the packages \verb|admcycles| \cite{admcycles} and \verb|diffstrata| \cite{diffstrata} and can be used to cross check our formulas, see \Cref{section-crosschecks} for examples.

\subsection{Closed formulas in low genus}
While in general our formula only produces a recursive algorithm to compute the class of strata of exact differentials, in  genus $g=0,1$ we can use \Cref{thm:main}  to extract closed formulas for the class of the stratum of $\ramp$-exact differentials.

\begin{proposition} Let $\Exc$ be a stratum $\ramp$-exact differentials in $g=0,1$. Then
\[
[\cl{\Exc}] = (-1)^{\codim \Exc} \prod_{i} (\xi + \sum_{\lG\in W_i(\ramp)} \ell_{\lG}\bd[\lG]) \cdot \alpha_{\ramp} \in \CH(\PMSDS)
\] for some explicit collection of two-level graphs $W_i(\ramp)$
 and an explicit class $\alpha_{\ramp}$. 
 If $g=0$, then $\alpha_{\ramp} =1$ and if $g=1$, then $\alpha_{\ramp} \in \divR[2](\PMSDS)$.
We describe  $W_i(\ramp)$ and $\alpha_{\ramp}$ explicitly in  \Cref{prop-g-0,prop-g-1}.
\end{proposition}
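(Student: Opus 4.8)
The plan is to run the recursive algorithm underlying \Cref{thm:main} and show that for $g\le 1$ the recursion terminates after at most one step, so that the output collapses to the stated product. Recall from the proof of \Cref{thm:main} that $\cl{\Exc}$ is (a modification of) the zero locus of a ``period section'' of the extended period bundle $\vbcl$ on $\PMSDS$: over the open stratum the fibre of $\vbcl$ combines $H_1(X\setminus P(\omega);\CC)^\vee$ with the relative-period directions that pin down the $\ramp$-fibre structure, and the section records the absolute periods of $\omega$ (whose vanishing is exactness, since $f$ is recovered from $df$ by integration) together with the relative periods $\int_{p}^{p'}\omega$ (whose vanishing forces the prescribed ramification points to share a fibre of $f$). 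The algorithm computes $[\cl{\Exc}]$ as $c_{\mathrm{top}}(\vbcl)$ corrected by excess contributions supported on those non-horizontal boundary divisors $\bd[\lG]$ along which the period section vanishes identically, and each such correction is, up to a boundary push-forward, again a class of the same type for a stratum of exact differentials on the lower levels of $\lG$ --- whose total genus is at most $g$ and drops strictly whenever a component of positive genus is pushed below the top level.

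\textbf{Genus $0$.} When $g=0$ there are no absolute periods beyond residues, so the period section is the tuple $\left(\res_{q_j}\omega\right)_j$ modulo the single relation $\sum_j\res_{q_j}\omega=0$, together with the relative-period conditions, all of which are scalar-valued and linear in $\omega$. Hence $\vbcl$ is a direct sum $\bigoplus_i\calL_i^\vee$ of $\codim \Exc$ line bundles. A local computation on the universal multi-scale differential shows that, once extended over $\PMSDS$, the $i$-th section vanishes along $\bd[\lG]$ to order $\ell_{\lG}$ precisely for the two-level graphs in an explicit set $W_i(\ramp)$ --- those pushing the marked point governing the $i$-th condition below the top level --- so that $c_1(\calL_i)=\xi+\sum_{\lG\in W_i(\ramp)}\ell_{\lG}\bd[\lG]$. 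Since on a tree of $\PP^1$'s residues and relative periods may be prescribed freely subject only to the obvious relations, the period section is everywhere of expected rank and has reduced zero scheme, so $\cl{\Exc}$ is a genuine complete intersection and $[\cl{\Exc}]=\prod_i c_1(\calL_i^\vee)=(-1)^{\codim \Exc}\prod_i\bigl(\xi+\sum_{\lG\in W_i(\ramp)}\ell_{\lG}\bd[\lG]\bigr)$, i.e.\ $\alpha_{\ramp}=1$. The combinatorial description of $W_i(\ramp)$ is carried out in \Cref{prop-g-0}.

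\textbf{Genus $1$.} For $g=1$ the period bundle has, over the open stratum, a ``residue-and-relative'' piece $\calV^{\mathrm{res}}$ of rank $\codim \Exc-2$ --- treated exactly as in the $g=0$ case and contributing $(-1)^{\codim \Exc}\prod_i\bigl(\xi+\sum_{\lG\in W_i(\ramp)}\ell_{\lG}\bd[\lG]\bigr)$ to the top Chern class --- and a rank-$2$ ``elliptic'' piece $\calV^{\mathrm{ell}}$ recording the two honest periods of the genus-$1$ component. Because the genus drops along any boundary divisor that degenerates the genus-$1$ component, the elliptic section fails to be transverse only along such divisors, and on each of them the induced excess stratum is of genus $0$ and hence already understood; the recursion therefore stops after one iteration. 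Combining the factorization $c_{\mathrm{top}}(\vbcl)=c_{\mathrm{top}}(\calV^{\mathrm{res}})\,c_2(\calV^{\mathrm{ell}})$ with this single layer of excess corrections produces a degree-$2$ class $\alpha_{\ramp}$ with $[\cl{\Exc}]=(-1)^{\codim \Exc}\prod_i\bigl(\xi+\sum_{\lG\in W_i(\ramp)}\ell_{\lG}\bd[\lG]\bigr)\cdot\alpha_{\ramp}$; since every class the algorithm produces lies in the divisorial tautological ring, $\alpha_{\ramp}\in\divR[2](\PMSDS)$. The explicit form of $\alpha_{\ramp}$, and of the sets $W_i(\ramp)$, is recorded in \Cref{prop-g-1}.

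\textbf{Main obstacle.} The genuinely delicate points are both in the $g=1$ step. First, one must verify that the residue-and-relative part splits off from the elliptic part globally on $\PMSDS$, not merely over the open stratum, so that the top Chern class factors and the two sources of excess do not interfere. Second, one must control the one-step recursion for $\calV^{\mathrm{ell}}$: pinning down exactly which boundary divisors contribute, checking that after the genus-$1$ component degenerates one really lands among the genus-$0$ strata of the previous step, and confirming that the residual correction has degree exactly $2$. Matching the sets $W_i(\ramp)$ and the multiplicities $\ell_{\lG}$ against the vanishing orders of the period sections along the boundary is routine but must be carried out carefully; this bookkeeping is precisely the content of \Cref{prop-g-0,prop-g-1}.
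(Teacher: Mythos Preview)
Your overall strategy is in the right neighbourhood, but your organization creates an obstacle that the paper avoids entirely, and one of your genus-$0$ claims is not correct as stated.

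The paper does \emph{not} try to split the period bundle into a ``residue-and-relative'' piece and an ``elliptic'' piece. Instead it uses the natural three-step chain
\[
\cl{\Exc}\;\subseteq\;\cl{\Excmu}\;\subseteq\;\PMSDS^{\res}\;\subseteq\;\PMSDS,
\]
corresponding to the filtration $\calH_{\res}\subseteq\calH_{\abs}\subseteq\calH_{\ramp}$. The two outer steps are handled by flag arguments already available in the paper: the residue step $\PMSDS^{\res}\subseteq\PMSDS$ by \Cref{prop:residue-closed}, and the ramification step $\cl{\Exc}\subseteq\cl{\Excmu}$ by the analogous argument (Prop.~\ref{prop-torsion}), using that $\calH_{\ramp}/\calH_{\abs}$ has trivial monodromy on $\Excmu$. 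Each of these contributes a product of factors $(\xi+\sum\ell_{\lG}\bd[\lG])$. Only the middle step $\cl{\Excmu}\subseteq\PMSDS^{\res}$ involves the genuine absolute periods $H_1(X)$; in genus $0$ this step is vacuous (every residueless differential is exact, so $\alpha_{\ramp}=1$), and in genus $1$ it is a codimension-$2$ condition whose excess can be stripped off in one pass because after removing the purely divisorial extraneous components the evaluation section already has expected codimension. This yields the explicit $\alpha_{\ramp}\in\divR[2]$. Because the paper works \emph{inside} $\PMSDS^{\res}$ before touching the elliptic periods, and \emph{inside} $\cl{\Excmu}$ before touching the relative ones, no global splitting of the bundle is ever needed; the ``main obstacle'' you flag simply does not arise.

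Your attempt to bundle residues and relative periods together is awkward because $\calH_{\res}$ is a \emph{sub}-local system while $\calH_{\ramp}/\calH_{\abs}$ is a \emph{quotient}; they do not sit side by side in any natural way, and the relative periods are only well-defined functions once the absolute periods have been killed. Separately, your genus-$0$ assertion that the period section is ``everywhere of expected rank and has reduced zero scheme'' is not true: on any boundary divisor where all poles go to lower level, every residue vanishes identically. The paper's flag argument circumvents this by cutting one divisorial condition at a time and absorbing the boundary vanishing into the coefficient $\ell_{\lG}\bd[\lG]$ at each step, rather than asserting regularity of the full section at once.
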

The coefficients $\ell_{\lG}$ will be defined in \Cref{sec-lgs}.

 \subsection*{Lifting admissable cover cycles to the log Chow ring}
 The log Chow ring  $\logCH(\Mgnbar)$ ring of $\Mgnbar$ can be constructed via logarithmic geometry or alternatively as the colimit of the Chow rings of iterated blowups of boundary strata of $\Mgnbar$, see for example \cite{log-chow} for an introduction. Sometimes tautological classes become expressible more easily in the log Chow ring. For example it was shown in \cite{log-chow} that  the top $\lambda$-class $\lambda_g$ does not lie in the subalgebra of $\CH(\Mgnbar[g])$ generated by divisorial classes for $g\geq 2$ but if lifted to the log Chow ring $\lambda_g$ lies in the divisorial subring $\operatorname{divlogCH}^*(\Mgnbar[g])$.

In \cite[Thm. 5.13 ]{tale-moduli} the moduli space of multi-scale differentials $\PMSDS$ was related to an iterated blowup coming from log geometry.
We can thus use  closures of strata of $\ramp$-exact differentials $\Exc$ to define a lift of admissable cover cycles to the log Chow ring.
We have shown that the class of the closure  of $\Exc$ lies in the divisorial subring of $\CH(\PMSDS)$ and it seems an  interesting question to determine whether the lift lies in the divisorial subring $\operatorname{divlogCH}^*(\Mgnbar[g])$.

\subsection{Motivation from Teichm\"uller dynamics and bialgebraic geometry}
Strata of exact differentials are examples of a more general class of subvarieties of strata $\PSt$.
The stratum $\St$ has a linear structure given by local period coordinates and {\em linear subvarieties} of $\St$ are algebraic subvarieties which are locally given by linear equations in period coordinates.
If the linear equations are defined over the real numbers, as it happens for $\Excmu$, then a linear subvariety is invariant for the natural $\GL_+(2,\RR)$-action on strata. In a holomorphic stratum $\GL_+(2,\RR)$-invariant varieties are unions of finite orbit closures and the intersection theory of an orbit closure is (conjecturally) closely related to the dynamics of the $\GL_+(2,\RR)$-action (see for example \cite[Conjecture 4.3]{chen-moeller-sauvaget}).
Additionally, in \cite{euler-linear} the Euler characteristic of a linear subvariety is expressed as intersection number on the moduli space of multi-scale differentials.
If the class of an orbit closure can be explicitly computed in the Chow ring of $\PMSDS$ these formulas can be evaluated effectively. This motivates the question whether classes of orbit closures lie in the tautological ring. One way to produce an orbit closure from a given one is using covering constructions.  But tautological classes are not preserved under branched coverings, for example the bielliptic locus is in general not tautological, see \cite{graber-pandharipande} and can be realized as a covering construction of a stratum in genus one.
\begin{question}
Are pushforwards of classes of affine invariant submanifolds H-tautological in $\Mgnbar$ (in the sense of \cite{Lian})?
\end{question}
In this paper we deal with two classes of linear subvarieties, strata of exact differentials and residue subspaces, both of which only occur in strata of meromorphic differentials. In both cases the classes happen to lie in the (classical) tautological ring. 

Linear subvarieties are examples of bialgebraic subvarieties of strata (see \cite{Klingler-Lerer} for an introduction to these circles of ideas). Since bialgebraic varieties are very special and rare, one expect their classes to have special numerical properties as well and it seems interesting to investigate when these classes are tautological.

\subsection{Residueless differentials}\label{sec-residues}
Our methods not only work for strata of exact differentials  but also for linear subvarieties defined by linear equations among residues. These can be defined as follows.
On a stratum of meromorphic differentials $\St$ there exists a residue map
\[
R: \St\to \CC^{r}, 
\]
where $r$ is the number of poles of $\St$, sending a differential to the residues at the marked poles.
A {\em residue subspace} is an algebraic variety of the form $R^{-1}(V)$, where $V\subseteq \CC^r$ is some linear subspace.
If $V$ is defined over the real numbers, $R^{-1}(V)$ is a $\GL_+(2,\RR)$-invariant subvariety.
In the case where $V$ is the zero subspace we call the resulting variety the stratum of residueless differentials.
Their classes in the cohomology ring of $\Mgnbar$ have been recently considered in \cite{residueless} to construct a partial cohomological field theory related to the KP hierarchy.

 \Cref{thm-main-general} implies that Chow classes of residue subspaces lie in the divisorial tautological ring but in this special case the proof is easier and we also obtain a closed formula.

\begin{proposition}\label{prop:residue-subspaces} Let $Z$ be the closure of a residue subspace in  $\PMSDS$. Then there exists a closed formula 
\[
Z = (-1)^{\codim Z} \prod_{i} (\xi + \sum_{\lG\in W_i(Z)} \ell_{\lG}\bd[\lG]) \in \CH(\PMSDS)
\] in the divisorial tautological ring. Here $W_i(Z)$ is a collection of two-level graphs that we describe explicitly in 
\Cref{prop:residue-closed}.
\end{proposition}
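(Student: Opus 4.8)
The plan is to realize $Z$ as a proper intersection of $\codim Z$ divisors on $\PMSDS$, each obtained from the vanishing of one residue condition, and to compute the class of each such divisor by tracking how residues degenerate along the boundary.

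First I would set up the residue sections on the open stratum. Since the residue map $R\colon\St\to\CC^{r}$ is $\CC^{*}$-equivariant and linear on the cone, a choice of $\codim Z=\codim_{\CC^{r}}V$ linear forms $\psi_{1},\dots,\psi_{\codim Z}$ cutting out $V$ gives functions $\psi_{i}\circ R$ on $\St$ that are homogeneous of degree one, hence descend to sections $s_{i}$ of $\calO(1)=\calO(-1)^{\vee}$ on $\PSt$ whose common zero locus is $R^{-1}(V)/\CC^{*}$. (The single relation $\sum_{k}\res_{q_{k}}=0$ only restricts which $V$ give a nonempty $Z$, after intersecting $V$ with the corresponding hyperplane.)

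Next I would extend these sections across the boundary of $\PMSDS$. Residues stay bounded — indeed they go to zero — under degeneration, so each $s_{i}$ extends to a genuine holomorphic section of $\calO(1)$ on $\PMSDS$; the point is to find its boundary vanishing. Using the local form of a multi-scale differential near a non-horizontal divisor $\bd[\lG]$, where a differential on a lower level is rescaled by the $\ell_{\lG}$-th power of the level parameter relative to the level above (with $\ell_{\lG}$ as in \Cref{sec-lgs}), one checks that $\res_{q_{k}}$ of a nearby smooth differential vanishes to order $\ell_{\lG}$ along $\bd[\lG]$ exactly when $q_{k}$ degenerates below the top level, and is otherwise generically nonzero; no such vanishing occurs along the horizontal divisor. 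Hence $s_{i}$ vanishes identically along $\bd[\lG]$ precisely for the graphs $\lG$ in an explicit combinatorial set $W_{i}(Z)$ — those for which every pole on which $\psi_{i}$ is supported lies below the top level — and then to order $\ell_{\lG}$; this is the content of \Cref{prop:residue-closed}. Writing $Z_{i}$ for the closure of $\{\psi_{i}\circ R=0\}$ in $\PMSDS$, this gives $\{s_{i}=0\}=Z_{i}+\sum_{\lG\in W_{i}(Z)}\ell_{\lG}\bd[\lG]$ as divisors, so $[Z_{i}]=c_{1}(\calO(1))-\sum_{\lG\in W_{i}(Z)}\ell_{\lG}\bd[\lG]=-\xi-\sum_{\lG\in W_{i}(Z)}\ell_{\lG}\bd[\lG]$.

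It then remains to prove that $Z_{1},\dots,Z_{\codim Z}$ meet properly with $\cl{R^{-1}(V)}$ as their only component, counted with multiplicity one; granting this, $[Z]=\prod_{i}[Z_{i}]=(-1)^{\codim Z}\prod_{i}\big(\xi+\sum_{\lG\in W_{i}(Z)}\ell_{\lG}\bd[\lG]\big)$, which is the asserted formula and manifestly lies in the divisorial tautological ring. I expect this to be the main obstacle. On the interior it is immediate, since the $\psi_{i}\circ R$ are independent linear functionals along the fibres of the submersion $R$; the work is at the boundary, where one must check, in the explicit charts on $\PMSDS$ and including the deeper boundary strata, that the restrictions of the $s_{i}$ to each boundary divisor (after removing the identified vanishing) remain as independent as the combinatorics allows, so that no extra boundary component of codimension $\codim Z$ appears in $\bigcap_{i}Z_{i}$. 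This should follow by induction on $\codim Z$ from the single-equation case together with the product-type description of the boundary strata of the moduli space of multi-scale differentials, provided the flag $0\subsetneq V^{\perp}_{1}\subsetneq\dots\subsetneq V^{\perp}$ underlying the choice of the $\psi_{i}$ is taken compatibly with the refinement of $\PMSDS$ by its level graphs.
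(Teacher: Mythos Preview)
Your setup is close to the paper's, but the claim that $Z_1,\dots,Z_{\codim Z}$ meet properly in $\PMSDS$ with $\cl{R^{-1}(V)}$ as the only component is false in general, and your description of $W_i(Z)$ is correspondingly not the right one. Take two residue conditions $\psi_1=r_1+r_2$ and $\psi_2=r_1-r_2$ and a two-level graph $\lG$ with $p_1$ on top and $p_2$ below. Neither $s_i$ vanishes identically on $\bd[\lG]$, so $\lG$ lies in neither of your $W_i(Z)$; but in local coordinates $s_1=r_1^{\top}+t^{\ell_\lG}r_2^{\bot}$ and $s_2=r_1^{\top}-t^{\ell_\lG}r_2^{\bot}$, so $Z_1\cap Z_2$ is cut out by $r_1^{\top}=t^{\ell_\lG}r_2^{\bot}=0$ and acquires the extraneous codimension-two component $\{r_1^{\top}=0\}\cap\bd[\lG]$. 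Your product $\prod_i[Z_i]$ then computes the class of this reducible locus, not of $Z$, and no single choice of the $\psi_i$ will avoid this phenomenon simultaneously for all $\lG$.

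The paper sidesteps this by never intersecting in the ambient space. It fixes a complete flag $\vb_0\subsetneq\cdots\subsetneq\vb_k=\vb$, sets $Z_i=\cl{\Ann(\vb_i)}$, and observes that $Z_i\subset Z_{i-1}$ is a \emph{divisor}; one then computes $[Z_i]\in\CH[1](Z_{i-1})$ from the single section of $(\vb_i/\vb_{i-1})^*\otimes\calO(1)$ restricted to $Z_{i-1}$, and obtains the product formula by iterated push--pull along the chain $Z_k\subset\cdots\subset Z_0=\PMSDS$. The crucial difference is that $W_i(\vb_\bullet)$ consists of those $\lG$ with $(\vb_i)_\lG^{[0]}=(\vb_{i-1})_\lG^{[0]}$, i.e.\ where the $i$-th condition becomes dependent on the previous ones on the top level --- a condition \emph{relative to the flag}, strictly larger than your ``support of $\psi_i$ lies below''. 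In the example above $\lG$ does lie in the paper's $W_2(\vb_\bullet)$, correctly accounting for the excess. Your suggested induction at the end is in fact the right move, but carried out this way it forces the paper's definition of $W_i$ and makes the ambient properness claim unnecessary.
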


\subsection{Outline}
We briefly sketch the proof idea for \Cref{thm:main}, highlighting the technical difficulties that we encounter along the way.
On the stratum $\PSt$ we can find a vector bundle $\calE$ with a section $s$ whose zero locus is exactly the stratum of exact differentials $\Excmu$. In particular the class of $\Excmu$ is given by the top Chern class of $\calE$.
The first step in extending the computation to the compactification $\PMSDS$ is to extend $\calE$ and $s$ to $\PMSDS$. This is done in \Cref{sec:Deligne}.
The extended section $\cl{s}$ contains the closure of $\Excmu$ but it also contains other extraneous components.
The extraneous components can be described explicitly and are all related to exact differentials on nodal curves.
The main technical difficulty is that some extraneous components are not of expected codimension.
We deal with this difficulty by blowing up all extraneous components in a carefully chosen order. On the blowup the extraneous components become divisorial and the extended section $\cl{s}$ vanishes with a certain multiplicity. After dividing by the defining equation of the divisors we obtain a new section of a different vector bundle and it turns out that now the zero locus only consists of the proper transform of $\cl{\Excmu}$. We then need to blow down all the exceptional divisors and use  intersection theory of blowups to obtain the class of $\cl{\Excmu}$.
Another technical difficulty is that some of the extraneous components are not smooth but instead have non-reduced stack structures and therefore the resulting blowup might not be smooth as well. By a careful analysis of the local defining equations we show that the singularities are mild enough so that the Chow groups still have an intersection product and the classes of proper transforms can be computed.

\subsection*{Acknowledgments}
We would like to thank Samuel Grushevsky and Martin M\"{o}ller for helpful discussions. We also would like to  thank Matteo Costantini, Johannes Schmitt and Rahul Pandharipande for comments on an earlier draft.

\section{Preliminaries}\label{sec:prelim}

\subsection{Exact differentials}
Given a rational function $f:X\to \PP^1$ from a Riemann surface $X$ of genus $g$, the {\em associated exact differential} is  $df=f^*dz$ and we can recover $f$ by integrating.
By acting on $\PP^1$ with M\"obius transformations we can produce isomorphic rational functions with different exact differentials. In order to associate to the isomorphism class of $f$ a unique exact differential, at least up to rescaling, we need to normalize $f$ suitably.
By choosing a marked point $p_1$ on $X$, we can normalize $f$ such that$f(p_1)=\infty$. This determines $f$ up to rescaling.

Exact differentials  characterized in terms of their periods. A meromorphic differential $\omega$ on $X$ is exact if and only if all its absolute periods ~$\int_{\gamma}\omega$ for ~$\gamma\in H^1(X\setminus P(\omega);\ZZ)$ vanish, where $P(\omega)$ denotes the set of poles of $\omega$.
In that case the rational function is given by $f(x)=\int_{x_0}^x \omega$ for some choice of base point $x_0$ and some choice of path from $x_0$ to $x$.
Later we will see that that the vanishing of absolute periods can be described as the zero locus of a section of a vector bundle on the stratum of meromorphic differentials, which will the basis of our computation.

Now we address how to determine the ramification of a rational function from its exact differential.
The ramification multiplicities of $f$ are determined by the orders of zeros and poles of $df$. In particular we have
\[
\mult_{p} f = \begin{cases} \ord_p df+1 & \text{ if } f(p)\neq\infty,\\
-\ord_p df -1 & \text{otherwise}.\end{cases}
\]

A {\em ramification profile} $\ramp=(d;\lambda_1,\ldots,\lambda_k)$ consists of a natural number $d$ and a tuple of partitions $\lambda_i= (\lambda_{i,1},\ldots,\lambda_{i,l(\lambda_i)})$ satisfying the Riemann-Hurwitz condition
\[
\sum_{i,j} \left( \lambda_{i,j}-1\right) = 2d + 2g(X)-2.
\]
A marked curve $(X,Q=(q_{i,j}),f)$ with a rational function $f$ of degree $d$ is said to have ramification profile $\ramp$ if at all marked points we have $\mult_{q_{i,j}} f = \lambda_{i,j}$ and if two points $q_{i,j}, q_{i,k}$ correspond to the same partition, they lie in the same fiber of $f$, i.e., $f(q_{i,j})= f(q_{i,j})$. In terms of periods of $df$ this translates to 
\[
\int_{q_{i,j}}^{q_{i,k}} df=0.
\] 
The integration here is performed on any path on $X$ connecting $q_{i,j}$ and $q_{i,k}$ and is independent of the choice of such a path, since we already know that all absolute periods of $df$ are zero.

\subsection{Strata of exact differentials}
By  passing to exact differentials, rational functions with a given ramification profile $\ramp$ can be described in terms of meromorphic differentials with prescribed orders and zeros, together with  additional linear constraints on certain periods.

We let $\mu$ be a tuple of  order of zeros and poles of an exact differential with ramification profile $\ramp$. In general there is ambiguity because of the choice of a pole of $f$. However, we will always assume that the first partition $\lambda_1$ corresponds to the poles of $f$. With this convention $\ramp$ determines  a unique  partition $\mu$, which we call  the {\em partition associated to } $\ramp$.
For example if we consider  hyperelliptic curves the ramification profile is $\ramp=(2^{2g+2})$ and the associated partition is $\mu=(-3,1^{2g+1})$, where we used exponential notation to denote  repetitions.

Our goal now is to define moduli spaces for exact differentials.
The stratum $\St$ parametrizes tuples $(X,Q=(q_1,\ldots,q_n),\omega)$ of  marked Riemann surfaces $X$ of genus $g$ together with a meromorphic differential $\omega$ vanishing to  order $\mu_i$ at $p_i$. Most of the time we only consider differentials up to rescaling, which are parametrized by the projectivized stratum $\PSt=\St/\CC^*$.
The {\em stratum of  exact differentials}
\[
\begin{split}
\Excmu&
=\left\{(X,\omega)\,:\, \int_{\gamma}\omega=0 \text{ for all }\gamma \in H_1(X\setminus P(\omega);\ZZ)\right\}\subseteq \PSt
\end{split}
\]
is a codimension $2g+|P(\omega)|-1$ subvariety of the projectivized stratum.

Differentials in $\Excmu$ are exact and have prescribed ramification multiplicities. To further specify the ramification profile $\ramp$ we thus need to pass to a further subvariety, taking into account which  points lie in the same fiber.
We write $\ramp=(\lambda_1,\ldots,\lambda_m)$ with $\lambda_i=(\lambda_{i,1},\ldots,\lambda_{i,l(\lambda_i)})$.
Sometimes it will be useful to re-index the marked points $Q=(q_1,\ldots,q_n)$ by $(q_{1,1},\ldots, q_{1,l(\lambda_1)},\ldots,q_{m,1},\ldots,q_{m,l(\lambda_m)})$ such that the marked point $q_{i,j}$ corresponds to the entry $\lambda_{i,j}$ in the partition $\lambda_i$. We say that $(q_{i,1},\ldots, q_{i,l(i)})$ are the {\em  marked points lying
in the $i$-th fiber} of $\ramp$.

We can now define the {\em stratum of $\ramp$-exact differentials} 
\[
\Exc:=\left\{(X,Q,\omega)\in\Excmu\,:\,\int_{q_{i,j}}^{q_{j,k}} \omega =0 \text{ for all $1\leq i \leq m, 1\leq j \leq l(\lambda_i)$}\right\}.
\]

The expected codimension of $\Exc$ inside the stratum is \[d_{\ramp}=2g+P(\omega)-1+\sum_{i=1}^m \left(l(\lambda_i)-1\right).\]

\section{Local systems on strata}\label{sec:localsystems}

Over the stratum $\PSt$ the relative homology groups $H_1(X\setminus P(\omega),Z(\omega);\CC)$ assemble to a $\CC$-local system. Here $P(\omega)$ and $Z(\omega)$ denote the set of poles and zeros of $\omega$, respectively. We let $\calH$ be the associated flat vector bundle, which comes with  the {\em evaluation morphism}
\[
\ev:\calO(-1)\otimes \calH\mapsto \calO,\, \omega\otimes\alpha\mapsto \int_{\alpha} \omega.
\]

Our goal is to study subvarieties of strata defined by the vanishing of periods $\int_{\alpha} \omega$. In order to formalize this idea, let $\calV\subseteq \calH$ be a sub-local system. Here and in the following we do not distinguish between a local system and the associated flat vector bundle. We let \[\ev_{\calV}:\calO(-1)\otimes\calV\to\calO\] be the restriction of the evaluation section to cycles in $\calV$ and  then define the {\em annihilator of $\calV$} to be 
\[
\Ann{\PSt}:= Z(\ev_{\calV})=\left\{(X,\omega)\in\PSt: \int_{\alpha} \omega=0 \text{ for all } \alpha\in \calV_{(X,\omega)}\right\}\subseteq \PSt.
\]
The zero locus $\Ann{\PSt}$ is either empty or of codimension $\rank({\calV})$ in the stratum $\PSt$.  This can be checked for example in local period coordinates.

In the sequel we will mainly focus on three examples of local systems $\calV$, which we will now discuss.

The first one is the local system of absolute homology $\calH_{abs}$ whose fiber over a point $(X,\omega)$ is $H_1(X\setminus P(\omega);\CC)$.
The zero locus $\Ann[({\calH_{\abs}})]{\PSt}$ coincides with the stratum of exact differentials $\Excmu$. The expected codimension of $\Excmu$ is $\rank(\calH_{\abs})=2g+|P(\omega)|-1$.

If we want to take the ramification profile of a rational function into consideration, we need to require certain relative periods to vanish and thus need to pass a local system containing $\calH_{\abs}$. This will be our second example.
Given a ramification profile $\ramp$ we define  \[
H_{\ramp}(X,\omega)\subseteq H_1(X\setminus P(\omega),Z(\omega);\CC)\]
 to be  the smallest subspace containing the absolute homology $H_1(X\setminus P(\omega);\CC)$ as well as all paths such that both  endpoints $q,q'$ lying in the same fiber, i.e., there exist $i,k,k'$ such that $q=q_{i,k},q'=q_{i,k'}$. In particular we have $\calH_{\abs}\subseteq \calH_{\ramp}\subseteq \calH$.
The annihilator of $\calH_{\ramp}$ consists of $\ramp$-exact differentials, in other words
\[
\Ann[(\calH_{\ramp})]{}=\Exc.
\]
The expected codimension in this case is $2g+|P(\omega)|-1+\sum_{i=1}^m \left(l(\lambda_i)-1\right)$, where we recall that $l(\lambda_i)$ is the number of parts in the partition $\lambda_i$.

The last example we will consider is related to linear relations among residues.
First consider the local system $\calH_{\res}$ with fiber $\ker( H_1(X\setminus P(\omega);\CC)\to H_1(X;\CC))$.
The bundle $\calH_{\res}$ is trivial and for any subspace $R\subseteq (\calH_{\res})_{(X,\omega)}$ of a fiber we consider the associated vector bundle $\calR\subseteq \calH_{\res}$. 
The annihilator of $\calR$ consists of all differentials whose residues satisfy the linear equations given by $\calR$.
A special case is the case $\calR=\calH_{\res}$ which yields the {\em stratum of residueless differentials} $\PMSDS^{\res}$.

The annihilator of any local system $\calV\subseteq \calH$ is the zero locus of a vector bundle and has the expected codimension. From this we can determine the class of $\Ann{}$ as the top Chern class of $\calV^*\otimes \calO(-1)$. The Chern classes of a flat vector bundle vanish in cohomology by Chern-Weil theory. For the examples $\calH_{\abs},\calH_{\ramp},\calR$, which we will focus on the sequel, the same is true also in the Chow ring.

\begin{definition}
A local system $\vb\subseteq\calH$ on $\PSt$ is {\em globally defined}
if either $\vb\subseteq \calH_{\res}$  or $\calH_{\abs}\subseteq \vb$.
\end{definition}

In particular $\calH_{\abs},\calH_{\ramp},\calR$ are all globally defined local systems.

\begin{remark}
If $\vb\subseteq \calH_{\res}$ then $\vb$ is a trivial local system.
On the other hand if $\calH_{\abs}\subseteq \vb$, then 
$\dfrac{\vb}{\calH_{\abs}}$ is trivial. 
\end{remark}

\begin{lemma} Let $\calV$ be any globally defined local system. Then $\calV$ has vanishing Chern classes in the Chow ring of $\PSt$.
\end{lemma}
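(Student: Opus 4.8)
The plan is to reduce everything to the two building blocks identified in the Remark preceding the statement: a globally defined local system $\calV$ either sits inside $\calH_{\res}$, in which case it is a trivial local system, or it contains $\calH_{\abs}$, in which case the quotient $\calV/\calH_{\abs}$ is trivial. In the first case there is nothing to do beyond recording that a trivial bundle has trivial Chern classes, so the whole content lies in the second case, where the natural strategy is to combine a Whitney-type multiplicativity argument with a separate analysis of $\calH_{\abs}$ itself. Concretely, from the short exact sequence of local systems $0\to\calH_{\abs}\to\calV\to\calV/\calH_{\abs}\to 0$ one gets $c(\calV)=c(\calH_{\abs})\cdot c(\calV/\calH_{\abs})$ in $\CH^*(\PSt)$; since $\calV/\calH_{\abs}$ is trivial its total Chern class is $1$, so $c(\calV)=c(\calH_{\abs})$, and it suffices to prove the statement for $\calV=\calH_{\abs}$.

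For $\calH_{\abs}$ itself I would argue as follows. The bundle $\calH_{\abs}$ is the flat vector bundle associated to the local system $H_1(X\setminus P(\omega);\CC)$ over $\PSt$; by Chern–Weil / the Chern classes of a flat complex bundle vanish in de Rham cohomology, so the issue is purely about lifting this vanishing from cohomology to the Chow ring. The cleanest route is to observe that $\PSt$ carries (after a finite cover, or as an orbifold) a filtration of $\calH_{\abs}$, or simply to use that the monodromy representation of $\pi_1(\PSt)$ on $H_1$ factors through $\Sp(2g,\ZZ)$ together with residue data; one can then appeal to the fact that the relevant classifying space for the monodromy group has trivial higher Chow groups in the range we need, or equivalently build an explicit algebraic trivialization of a suitable power of $\det\calH_{\abs}$ and of the associated graded of a flag. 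In practice I expect the paper handles this by passing to a level cover on which the local system becomes a successive extension of pulled-back bundles from $\calM_g$ (or from a point, for the residue part), reducing to the classical fact that flat bundles on $\calM_g$ coming from $H^1$ have tautologically vanishing Chern classes in $\CH^*$.

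The step I expect to be the main obstacle is precisely this last one: promoting the Chern–Weil vanishing to an identity in the Chow ring rather than in cohomology. Over a general base a flat bundle need not have vanishing Chow-theoretic Chern classes, so one genuinely needs to exploit the specific geometry of $\PSt$ — that its orbifold fundamental group has a monodromy image which is (virtually) generated by unipotent/finite-order transvections, so that the bundle admits, up to finite cover, a filtration with trivial graded pieces. Once that structural input is in hand the Whitney-sum computation is immediate, and combining it with the reduction of the first paragraph finishes the proof. I would therefore organize the writeup as: (1) split into the two cases of the definition; (2) the trivial-bundle case; (3) the Whitney reduction to $\calH_{\abs}$; (4) the filtration-by-trivial-pieces argument for $\calH_{\abs}$, which is the crux.
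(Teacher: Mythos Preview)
Your reduction steps (1)--(3) are exactly what the paper does: split into the two cases of the definition, dispose of the trivial case, and use the extension $0\to\calH_{\abs}\to\calV\to\calV/\calH_{\abs}\to 0$ with trivial quotient to reduce to $\calV=\calH_{\abs}$.

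The gap is in step (4). Your proposed mechanism---that on a finite cover $\calH_{\abs}$ (or $\calH_1$) admits a filtration with \emph{trivial} graded pieces---is false. After stripping off the residue part via $0\to\calH_{\res}\to\calH_{\abs}\to\calH_1\to 0$ (with $\calH_{\res}$ trivial), the remaining bundle $\calH_1$ with fiber $H_1(X;\CC)$ has as its natural filtration the Hodge filtration $0\to\calE\to\calH_1\to\calE^*\to 0$, where $\calE$ is the Hodge bundle pulled back from $\Mgn$. The graded pieces $\calE$ and $\calE^*$ are certainly not trivial: their Chern classes are the $\lambda$-classes, which are nonzero. So no unipotence-of-monodromy or level-cover argument will produce trivial graded pieces.

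What actually makes the proof work is Mumford's relation $c(\calE)\cdot c(\calE^*)=1$ in $\CH^*(\Mgn)$ (equivalently $\ch(\calE)+\ch(\calE^*)=0$), proved in \cite{mumford-enumerative} via Grothendieck--Riemann--Roch. This gives $c(\calH_1)=c(\calE)c(\calE^*)=1$, hence $c(\calH_{\abs})=1$. The vanishing is thus a genuinely nontrivial identity in the tautological ring, not a formal consequence of flatness or monodromy structure; your suspicion that ``one genuinely needs to exploit the specific geometry'' is right, but the specific input is Mumford's computation, not a filtration-by-trivials argument.
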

\begin{proof}
 Firstly, if the bundle is trivial, it has zero Chern classes.
 Otherwise we can write $\vb$ as an extension
 \[
 0\to  \calH_{\abs} \to \vb \to  \calO^{n+1} \to 0 
 \]
 of $\calH_{\abs}$ by a trivial bundle.

Similarly, $\calH_{\abs}$ is an extension of $\calH_{\res}$ by the bundle $\calH_1$ with fiber $H_1(X;\CC)$ by a trivial bundle. Finally, the bundle $\calH_1$ sits in a short exact sequence

\[
0\to \calE \to\calH_1\to\calE^*\to 0,
\]
where $\calE$ is the Hodge bundle over $\Mgn$. It then follows from Mumfords computation \cite{mumford-enumerative} that in the Chow ring of $\Mgn$ the identity $\ch(\calH_1)=\ch(E)+\ch(E^*)=0$ holds.
 The same thus holds for the pullback to $\PSt$.
\end{proof}

\begin{proposition}\label{prop:classStratum}Suppose that $\calV\subseteq\calH $ is a globally defined local system.
Then the  class of $\Ann{\PSt}$ in the Chow ring of $\PSt$ is given by
\[
[\Ann{\PSt}]=c_1(\calO(1))^{r}\in\CH[r](\PSt),
\]
where $r:=\rank\calV$.
\end{proposition}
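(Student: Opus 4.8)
The plan is to exhibit $\Ann{\PSt}$ as the zero locus of a regular section of an explicit rank-$r$ bundle and then read off its class from the Chern-class vanishing established in the Lemma above. First I would note that the restricted evaluation morphism $\ev_{\calV}\colon\calO(-1)\otimes\calV\to\calO$ is the same datum as a global section of $(\calO(-1)\otimes\calV)^*\otimes\calO\cong\calO(1)\otimes\calV^*$, a vector bundle of rank $r=\rank\calV$, whose zero scheme is by definition $\Ann{\PSt}$.

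Next I would invoke the fact recalled just before the proposition — and checked in local period coordinates — that $\Ann{\PSt}$ is either empty or a subvariety of pure codimension $r$ in $\PSt$. If it is empty, both sides of the asserted identity vanish and there is nothing to prove. Otherwise the section $\ev_{\calV}$ cuts out a subscheme of the expected codimension, so it is a regular section; by the standard localized-top-Chern-class formula (Fulton) the class of its zero locus equals the top Chern class of the bundle, that is, $[\Ann{\PSt}]=c_r(\calO(1)\otimes\calV^*)\in\CH[r](\PSt)$.

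It then remains to expand the top Chern class of the twist of a line bundle by a rank-$r$ bundle. The Whitney formula gives $c_r(\calO(1)\otimes\calV^*)=\sum_{i=0}^{r} c_1(\calO(1))^{r-i}\,c_i(\calV^*)$. By the Lemma, the globally defined local system $\calV$ has vanishing Chern classes in $\CH(\PSt)$, hence so does its dual, since $c_i(\calV^*)=(-1)^i c_i(\calV)$. Therefore only the $i=0$ summand survives and $[\Ann{\PSt}]=c_1(\calO(1))^r$, as claimed.

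The step I expect to be the actual (if modest) obstacle is foundational rather than computational: one must make sure the equality $[Z(s)]=c_{\mathrm{top}}(E)$ is being applied in the right setting — a section of a vector bundle on the smooth but possibly non-proper Deligne--Mumford stack $\PSt$ — and that $\Ann{\PSt}$ really carries its reduced structure, so that $[\Ann{\PSt}]$ is the honest cycle of the zero scheme with no hidden multiplicity. Both points follow once one knows $\Ann{\PSt}$ has the expected codimension, which is exactly the input supplied before the proposition; everything else is the formal Chern-class bookkeeping above, resting entirely on the Lemma.
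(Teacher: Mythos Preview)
Your proposal is correct and follows essentially the same approach as the paper: identify $\Ann{\PSt}$ as the zero locus of a regular section of $\calV^*\otimes\calO(1)$, invoke the top Chern class formula, and use the Lemma on vanishing Chern classes of $\calV$ to reduce to $c_1(\calO(1))^r$. You supply a bit more detail (the Whitney expansion, the empty case, the foundational remarks), but the argument is the same.
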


\begin{proof}
Since $\Ann{\PSt}$ is the zero locus of a section of $\mathcal{H}om(\calO(-1)\otimes\calV,\calO)\simeq \calV^*\otimes\calO(1)$ of the expected codimension, the class $[\Ann{\PSt}]$ agrees with the top Chern class $c_r(\calV^*\otimes\calO(1))$.
Thus
\[
[\Ann{\PSt}]=c_r(\calV^*\otimes\calO(1))=
\calO(1)^r,\]
where in the last equality we used that $\calV$ has zero Chern classes.
\end{proof}

\subsection{Moduli space of multi-scale differentials}
The moduli space of multi-scale differentials $\PMSDS$ was constructed in \cite{BCGGMsm} and compactifies the projectivized stratum $\PSt$. The space $\PMSDS$ is a smooth projective \DM stack and the boundary  is a normal crossing divisor.
Furthermore, the boundary $\PMSDS\setminus \PSt$ is stratified by level graphs, which we now recall.

\subsection{Level graphs}\label{sec-lgs}
 A level graph is a stable graph with a total order on the vertices (allowing equality) together with an integer for each half-leg, the order of vanishing, and an additional non-negative integer for each edge, called  the prong at $e$.
 Our notation slightly differs from \cite{BCGGMsm}, where the above is called an ``enhanced level graph". In this paper we exclusively deal with enhanced level graphs and thus refer to them simply as level graphs.
 We follow the convention from \cite{CMZ} and index the levels of $\lG$ by negative integers $\{0,-1,\ldots,-L\}$.
 The $i$-th level passage is a horizontal line above level $-i$ and below level $-i+1$ and thus level passages are indexed by $\{1,\ldots,L\}$.
 An edge between vertices of the same level is called {\em horizontal} and {\em vertical} otherwise. A level graph without horizontal edges is called {\em non-horizontal}.

 We denote $\LG$ the set of isomorphism classes of all level graphs and $\LG_k$ the subset of codimension $k$ level graphs. We also denote $\BIC\subseteq \LG_1$ the set of isomorphism classes of non-horizontal two level-graphs.
For $\lG\in\BIC$ we also define the factor $\ell_{\lG}$ to be the least common multiple of all prongs.

 \subsection{The boundary of $\PMSDS$}
The boundary of $\PMSDS$ is stratified by level graphs.
 The generic point of a boundary stratum $\bd[\lG]$ corresponding to a level graph $\lG$ consists of {\em twisted differentials} $(X,Q,\omega)$ which are compatible with $\lG$.
 The codimension of $\bd$ is the number of level passages, i.e., the number of level $-1$, plus the number of horizontal edges and thus equal to the codimension of the level graph.
 A feature of $\PMSDS$, first noticed in \cite{CMZ}, is that the non-horizontal boundary $\cup_{\lG\in\BIC} \,\bd[\lG]$ is a {\em simple} normal crossing divisor.

\subsection{Tautological rings}
\begin{convention}
All Chow rings in this paper are considered with rational coefficients.
\end{convention}
The tautological ring $\tr{\Mgnbar}$ is the smallest subring of the Chow ring which is closed under forgetful maps and gluing maps. A set of additive generators for $\tr{\Mgnbar}$ consists of decorated strata classes, which can be described as follows. Given a stable graph $\sG$ set $\Mgnbar[\sG]= \prod \Mgn[g_v,n_v]$ where $g_v$ and $n_v$ are the genus and the number of half-legs adjacent to $v$, respectively. For every polynomial $\alpha$ in $\psi$-classes in $\CH(\Mgnbar[\sG])$ the decorated strata class $[\sG,\alpha]$ is defined by 
\[
[\sG,\alpha]:= \zeta_{\sG,*}\alpha,
\]
where  $\zeta_{\sG}:\Mgnbar[\sG]\to \Mgnbar$ is the clutching morphism.

The tautological ring of the moduli space of multi-scale differentials $\PMSDS$ was first introduced in \cite{CMZ} and has a similar description. Given a level graph $\lG$ we have induced partitions $\mu_v$ for every vertex $v\in V(\lG)$.
Additionally there are induced residue conditions $\mathfrak{R}_i$ for every level coming from the matching residue condition at horizontal nodes and the global residue condition.
We can then form the disconnected stratum
\[
\St[(\mu_{[i]})]:=\prod_{\ell(v)=i} \St[(\mu_v)]
\]
and the {\em generalized stratum} is the subspace $\calH(\mu_{[i]})^{\mathfrak{R}_i}\subseteq \St[(\mu_{[i]})]$ satisfying all residue conditions in $\mathfrak{R}_i$.
The projectivized disconnected stratum $\PSt[(\mu_{[i]})]^{\mathfrak{R}_i}$ is the quotient by the diagonal $\CC^*$-action.
As remarked in \cite[Prop. 4.2]{CMZ}, the construction for multi-scale differentials can also be applied to generalized strata to obtain a compactification $B_{\lG}^{[i]}:=\PMSDSdisc{g_{[i]},n_{[i]}}{(\mu_{[i]})}$ of $\PSt[(\mu_{[i]})]^{\mathfrak{R}_i}$.
We then set 
\[
B_{\lG}= \prod_{i=-L}^{0} B^{[i]}_{\lG}.
\]
The product $B_{\lG}$ comes with a forgetful map $f_{\lG}: B_{\lG}\to \Mgnbar[\sG]$, where $\sG$ is the underlying stable graph of $\lG$.  In \cite{CMZ} the authors construct a commutative diagram
\begin{equation}\label{diag:roof}
\begin{tikzcd}
& D^s_{\lG}\ar[dr,"c_{\lG}"] \ar[dl,"p_{\lG}",swap] &&\\
B_{\lG}\ar[d,swap,"f_{\lG}"] && D_{\lG}\ar[r]&\PMSDS\ar[d,"\rho"]\\
\Mgnbar[\sG]\ar[rrr,"\zeta_{\sG}",swap]&&& \Mgnbar
\end{tikzcd}
\end{equation}
where $c_{\lG}$ and $p_{\lG}$ are finite maps and $D^s_{\lG}$ is Cohen-Macaulay.
The tautological ring $\tr{\PMSDS}$ is the smallest subring of $\CH(\PMSDS)$ containing the fundamental classes of all generalized strata and stable under clutching maps $c_{\lG,*}p_{\lG}^*$ for all non-horizontal level graphs. It was shown in \cite[Thm. 1.5]{CMZ} that an additive generating set of $\Rtr(B)$ is given by 
\[
c_{\lG,*}p_{\lG}^*f_{\lG}^*\alpha,
\]
where $\alpha\in \CH(\Mgnbar[\lG])$ is a polynomial in $\psi$-classes and $\lG$ runs over all non-horizontal level graphs. 

\begin{remark} In \cite[Sec. 8]{CMZ} various tautological rings are constructed. In this paper we only consider the non-horizontal version, the smallest of the subrings in (loc.cit.).
\end{remark}

\begin{proposition}\label{prop:taut}The forgetful map $\rho:\PMSDS\to\Mgnbar$ preserves tautological rings, i.e.,
\[
\rho_*\tr{\PMSDS}\subseteq \tr{\Mgnbar}.
\]
\end{proposition}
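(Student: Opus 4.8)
The plan is to reduce the statement to the known fact that pushforwards along clutching maps preserve tautological classes, combined with a compatibility of the two roof diagrams. Recall from \eqref{diag:roof} that for each non-horizontal level graph $\lG$ we have the diagram relating $B_{\lG}$, $D^s_{\lG}$, $D_{\lG}$ and $\PMSDS$ on one side, and the diagram relating $\Mgnbar[\sG]$ and $\Mgnbar$ via the clutching map $\zeta_{\sG}$ on the other, and that the two are glued along $f_{\lG}:B_{\lG}\to\Mgnbar[\sG]$ and $\rho:\PMSDS\to\Mgnbar$. By \cite[Thm.\ 1.5]{CMZ}, an additive generating set for $\tr{\PMSDS}$ consists of the classes $c_{\lG,*}p_{\lG}^*f_{\lG}^*\alpha$ where $\alpha$ is a monomial in $\psi$-classes on $\Mgnbar[\lG]$ and $\lG$ ranges over non-horizontal level graphs. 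So it suffices to show that $\rho_*$ sends each such generator into $\tr{\Mgnbar}$.

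First I would fix such a generator and chase it around the commutative diagram. The key point is the identity $\rho_*\,(\text{inclusion})_*\,c_{\lG,*} = \zeta_{\sG,*}\,(f_{\lG})_*\,p_{\lG,*}$ on Chow groups, which follows from commutativity of \eqref{diag:roof} together with proper pushforward being functorial. Thus
\[
\rho_*\left(c_{\lG,*}p_{\lG}^*f_{\lG}^*\alpha\right) = \zeta_{\sG,*}\left(f_{\lG,*}\,p_{\lG,*}\,p_{\lG}^*\,f_{\lG}^*\alpha\right).
\]
Since $p_{\lG}$ is finite of some degree (in the stacky sense), $p_{\lG,*}p_{\lG}^*$ is multiplication by a rational number, so the inner expression simplifies to a rational multiple of $f_{\lG,*}f_{\lG}^*\alpha$. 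Now I would use the projection formula: $f_{\lG,*}f_{\lG}^*\alpha = \alpha\cdot f_{\lG,*}[B_{\lG}]$, which is $\alpha$ times a rational multiple of $[\Mgnbar[\sG]]$ provided $f_{\lG}$ is generically finite (which it is, as $B_{\lG}$ is a compactification of a stratum lying over $\Mgnbar[\sG]$; if $f_{\lG}$ is not generically finite the pushforward of the fundamental class is $0$ for dimension reasons and the generator dies under $\rho_*$). Hence $\rho_*$ of the generator is a rational multiple of $\zeta_{\sG,*}\alpha = [\sG,\alpha]$, which is by definition a decorated stratum class in $\tr{\Mgnbar}$.

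The main obstacle I anticipate is making the pushforward-degree bookkeeping precise: one must verify that $p_{\lG}$ and $f_{\lG}$ are proper (so that pushforward is defined and the diagram commutes on Chow groups), track the possible non-reducedness or stackiness contributing rational factors, and handle the degenerate case where $f_{\lG}$ drops dimension. One also needs that $\alpha$, being a polynomial in $\psi$-classes on $\Mgnbar[\lG]$, maps under $f_{\lG}^*$ to a class that interacts correctly with $\zeta_{\sG}^*\psi$-classes; in fact the relevant $\psi$-classes on $B_{\lG}^{[i]}$ pull back from $\Mgnbar[\sG]$ up to boundary corrections, so one should be slightly careful about whether the resulting class on $\Mgnbar[\sG]$ is literally a $\psi$-monomial or a $\psi$-monomial plus tautological boundary terms — either way it lies in $\tr{\Mgnbar[\sG]}$, and $\zeta_{\sG,*}$ of anything tautological is tautological by the very definition of $\tr{\Mgnbar}$ as closed under gluing. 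So the proof ultimately rests on (i) functoriality of proper pushforward around \eqref{diag:roof}, (ii) the projection formula, and (iii) the defining closure properties of the tautological ring.
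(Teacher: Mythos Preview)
Your overall architecture matches the paper's: reduce to the additive generators $c_{\lG,*}p_{\lG}^*f_{\lG}^*\alpha$, use commutativity of \eqref{diag:roof} to rewrite $\rho_*$ of such a generator as $\zeta_{\sG,*}$ of something on $\Mgnbar[\sG]$, and conclude by the closure of $\tr{\Mgnbar}$ under clutching. The paper's proof is exactly this, in two lines.

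There is, however, a genuine gap in your execution. You assert a dichotomy: either $f_{\lG}$ is generically finite, in which case $f_{\lG,*}[B_{\lG}]$ is a rational multiple of $[\Mgnbar[\sG]]$, or it is not, in which case the pushforward vanishes. Neither alternative typically holds. Already for the trivial one-vertex level graph one has $B_{\star}=\PMSDS$ and $f_{\star}=\rho$, and $\dim\PMSDS = 2g+n-2 < 3g-3+n = \dim\Mgnbar$ for $g\ge 2$; so $\rho$ is generically finite \emph{onto its image}, which is the closure of the stratum $\Mgnbarmu$, a proper subvariety. Thus $f_{\lG,*}[B_{\lG}]$ is (a multiple of) a product of strata classes in $\CH(\Mgnbar[\sG])$, not a multiple of the fundamental class, and your projection-formula step yields $\alpha$ times such a strata class. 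That this lies in $\tr{\Mgnbar[\sG]}$ is not formal: it is the theorem that classes of (closures of) strata of differentials are tautological, which the paper cites from \cite{universal-dr}. Your proof is complete once you replace the incorrect dichotomy with an appeal to this result.
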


\begin{proof}
Since $\tr{\PMSDS}$ is generated by classes of the form $c_{\lG,*}p_{\lG}^*f_{\lG}^*\alpha$, it suffices to show that\[
\rho_*c_{\lG,*}p_{\lG}^*f_{\lG}^*\alpha
\]
is tautological. This follows from the commutativity of \cref{diag:roof} and the fact that strata classes are tautological \cite{universal-dr}.
\end{proof}

We stress that the inclusion is strict in general for dimension reasons.

\subsection{Profiles and boundary strata}
The boundary of a generalized stratum $B$ is naturally stratified by level graphs, but we will organize the boundary strata somewhat differently to have a better behaved intersection theory.
The set of non-horizontal two level graphs admits a partial ordering, see \cite[Section 5]{CMZ}, which we now recall.
Given any level graph $\lGh$ we define $\delta_i(\lGh)$ to be the $2$-level graph, which is obtained by smoothing all edges, except the ones crossing the $i$-th level passage.
Let $\lG,\lG'\in\BIC$ be two different non-horizontal $2$-level graphs. We say that $\lG\succ \lG'$ if there exists a non-horizontal $3$-level graph $\lGh$ with $\delta_1(\lGh)=\lG, \delta_2(\lGh)=\lG'$.
This is well defined by \cite[Prop. 5.1]{CMZ}.

A profile $\pf=(\lG_1,\ldots,\lG_k)$ is an ordered tuple of non-horizontal two-level graphs ordered such that $\lG_i\succ\lG_{i+1}$. We call $L(\pf):=k+1$ the number of levels of $\pf$ and define the factor $\ell_{\pf}:=\prod_{i=1}^k \ell_{\lG_k}$, where we recall that $\ell_{\lG_k}$ is the product of all prongs of $\lG_k$.
 The collection of all profiles  of $B$ is denoted by $\PF$.
 Every $(k+1)$-level-graph $\lGh$ has an associated profile $\delta(\lGh)=(\delta_1(\lGh),\ldots,\delta_{k}(\lGh))$.
 
 For a non-horizontal two level graph $\lG$ we usually write $\lG$ instead of $(\lG)$. The empty profile $\star$ corresponds to the unique level graph  with one vertex.

The boundary stratum associated to a profile is 
\[
\bd[\pf]=\bigcup_{\lGh\,:\,\delta(\lGh)=\pf} \bd[\lGh].
\]
The codimension of $\bd[\pf]$ is $L(\pf)-1$.
It was shown in \cite{CMZ} that for a profile $\pf=(\lG_1,\ldots,\lG_k)$ we have $\bd[\pf]= \cap_{i=1}^k \bd[\lG_i]$. In particular $\bd[\pf]$ is a complete intersection of codimension $i$ and $[\bd[\pf]]=\prod_{i=1}^k [\bd[\lG_i]]\in\tr{B}$.

Given two profiles $\pf_1,\pf_2$ we say that $\pf_1$ and $\pf_2$ are comparable if the intersection $\bd[\pf_1]\cap\bd[\pf_2]$ is non-empty and for two comparable profiles we define the sum $\pf_1+\pf_2$ to be the profile of some (or equivalently any) level graph in $\bd[\pf_1]\cap\bd[\pf_2]$. We say $\pf_1=(\lG_1,\ldots,\lG_k)$ and $\pf_2=(\Delta_1,\ldots,\Delta_l)$ are complementary if the sets $\{\lG_1,\ldots,\lG_k\}$ and $\{\Delta_1,\ldots,\Delta_l\}$ are disjoint and $\pf_1,\pf_2$ are comparable.
In this case we have \[
\ell_{\pf_1+\pf_2}=\ell_{\pf_1}\ell_{\pf_2}.
\]
If $\pf_1,\pf_2$ are complementary we write $\pf_1\oplus\pf_2=\pf_1+\pf_2$ and say that  $\pf_1\oplus\pf_2$ is a direct sum. For example if $\pf=(\lG_1,\ldots,\lG_k)$ then $\pf=\bigoplus_{i=1}^k \lG_i$.
If $\pf[Q]=\pf_1\oplus \pf_2$ is a direct sum, then the intersection 
\[
\bd[{\pf[Q]}]=\bd[\pf_1]\cap\bd[\pf_2]
\]is a transversal intersection and in particular $[{\bd[{\pf[Q]}]}]=[\bd[\pf_1]]\cdot [\bd[\pf_2]]\in\trd{B}$.

We say $\pf$ degenerates to $\pf'$ or $\pf'$ is a {\em degeneration} of $\pf$ if $\bd[\pf']\subseteq\bd[\pf]$. This is equivalent to being able to write $\pf'=\pf\oplus \pf[R]$ for some profile $\pf[R]$. We also say that $\pf$ is an {\em undegeneration} of $\pf'$ and we write $\pf\degen\pf'$. For a level graph $\lGh$ we say that $\lGh$ is a degeneration of a profile $\pf'$ if the profile $\pf(\lGh)$ of $\lGh$ is a degeneration of $\pf'$.

Similarly, we say that $\pf'$ is a degeneration of $\pf$ by splitting level $i$, if we have
$\pf= (\lG_1,\ldots,\lG_n)$ and $\pf' = (\lG_1,\ldots,\lG_{-i}, \Delta_1,\ldots,\Delta_k, \lG_{-i+1},\ldots,\lG_n)$. We let $\PF[(\pf,i)]$ be the collection of all profiles, which are degenerations of $\pf$ by splitting level $i$.

\subsection{The divisorial tautological ring} Let $B$ be a generalized boundary stratum.
Instead of the whole tautological ring we will mostly work in the {\em divisorial tautological ring} $\trd{B}$, the subring  of the tautological ring $\tr{B}$ generated by codimension $1$ classes in $\tr{B}$.
For us the most important divisorial classes are $\psi$-classes, boundary divisors $\bd[\lG]$ where $\lG\in\BIC$ and the first Chern class $\taut=c_1(\calO_{\PSt}(-1))$ of the tautological line bundle $\calO_{\PSt}(-1)$ whose fibers is spanned by the differential $\omega$.

\begin{proposition}
If the generalized boundary stratum $B$  contains no simple poles, then a multiplicative generating set for $\trd{B}$ is given by \[
\taut\text{ and } D_{\lG} \text{ for } \lG\in \BIC.
\]
On the other hand, if $B$ contains simple poles, then a multiplicative generating set for $\trd{B}$ is given by $ D_{\lG} \text{ for } \lG\in \BIC$ and $\psi$-classes $\psi_j$ where $j$ runs over all marked simple poles.
\end{proposition}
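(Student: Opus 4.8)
The plan is to reduce the statement to a description of the degree-one part of the tautological ring and then to use the relation between the tautological class $\taut$ and the $\psi$-classes to remove redundant generators. By definition $\trd{B}$ is the subring of $\tr{B}$ generated by $\tr[1]{B}$, so it suffices to exhibit a spanning set of $\tr[1]{B}$ that lies in the subring generated by the classes in the statement. First I would restrict the additive generating set $c_{\lG,*}p_{\lG}^{*}f_{\lG}^{*}\alpha$ of $\tr{B}$ from \cite[Thm.~1.5]{CMZ} — which applies to the generalized stratum $B$ by \cite[Prop.~4.2]{CMZ} — to codimension one. Such a generator sits in codimension (number of level passages of $\lG$)$\,+\deg\alpha$, so in codimension one only two cases survive: $\lG$ trivial with $\alpha$ a single $\psi$-class, yielding a $\psi$-class on $B$; and $\lG\in\BIC$ with $\alpha=1$, yielding $\bd[\lG]$. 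Adding the tautological divisor $\taut$, this shows that $\tr[1]{B}$ is spanned by $\taut$, the $\psi_j$ at the marked points, and the $\bd[\lG]$ with $\lG\in\BIC$; hence $\trd{B}$ is generated by these classes, and everything reduces to expressing the $\psi_j$ — and, when simple poles are present, also $\taut$ — in terms of the asserted generators.

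The second ingredient is the standard relation between $\taut$ and the $\psi$-classes, which I would take from \cite{CMZ}. Concretely, for a marked point $q_j$ of order $\mu_j$ the leading term at $q_j$ of the suitably rescaled tautological differential defines a morphism of line bundles $\calO_B(-1)\to\mathbb{L}_j^{\otimes(\mu_j+1)}$, where $\mathbb{L}_j$ is the cotangent line at $q_j$; this morphism is an isomorphism over the open part of $B$ and vanishes only along boundary divisors, so comparing first Chern classes yields a relation in $\tr[1]{B}$ of the form
\[
(\mu_j+1)\,\psi_j \;=\; \taut \;+\; \sum_{\lG\in\BIC} a_{j,\lG}\,\bd[\lG]
\]
for suitable $a_{j,\lG}\in\QQ$. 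The essential feature is that the coefficient of $\psi_j$ equals $\mu_j+1$, which vanishes exactly when $q_j$ is a marked simple pole.

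From here the two cases of the statement follow. If $B$ has no simple poles then $\mu_j\neq-1$ for every marked point, so each relation can be solved for $\psi_j$, exhibiting $\psi_j$ as a $\QQ$-combination of $\taut$ and the $\bd[\lG]$; together with the first paragraph this gives that $\trd{B}$ is generated by $\taut$ and the $\bd[\lG]$. If $B$ does have a marked simple pole $q_{j_0}$, then the relation for $j=j_0$ has vanishing left-hand side and therefore reads $\taut=-\sum_{\lG\in\BIC}a_{j_0,\lG}\bd[\lG]$, so $\taut$ lies in the subring generated by the $\bd[\lG]$; plugging this into the relations for the remaining non-simple-pole marked points then expresses those $\psi_j$ through the $\bd[\lG]$ as well. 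The only classes left from the first paragraph are the $\psi_j$ at marked simple poles, which we retain; hence $\trd{B}$ is generated by the $\bd[\lG]$ together with the $\psi_j$ for $j$ a marked simple pole.

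I expect the main obstacle to be essentially organizational, resting on a careful reading of \cite{CMZ}: first, verifying that specializing the \cite{CMZ} generating set to a \emph{generalized} (and possibly singular) boundary stratum $B$, rather than to $\PMSDS$ itself, contributes no tautological divisor classes beyond $\taut$, the $\psi_j$ and the $\bd[\lG]$; and second, recording the $\taut$--$\psi$ relation with the correct boundary correction terms. Neither is conceptually difficult, but the degenerate behaviour at $\mu_j=-1$, where the leading-term map becomes vacuous and the relation collapses into an identity expressing $\taut$ through boundary divisors, is precisely the mechanism that produces the dichotomy in the statement, so it has to be tracked carefully.
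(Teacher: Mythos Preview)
Your proposal is correct and follows essentially the same route as the paper: both arguments invoke \cite[Thm.~1.5]{CMZ} to obtain $\psi_j$ and $\bd[\lG]$ as generators of $\tr[1]{B}$, and then use the relation $(\mu_j+1)\psi_j=\taut+\text{(boundary terms)}$ from \cite[Prop.~8.2]{CMZ} to trade $\psi_j$ for $\taut$ when $\mu_j\neq -1$ and to express $\taut$ via boundary divisors when $\mu_j=-1$. Your write-up is more explicit about the codimension-one restriction and the geometric origin of the relation, but the logical content is identical.
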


\begin{proof}
By \cite[Thm 1.5]{CMZ} $\trd{B}$ is generated by $\bd[\lG]$ for $\lG\in\BIC$ and $\psi_j$ where $j$ runs over all marked points.
It then follows from \cite[Prop 8.2]{CMZ} that $\psi_j$ can be expressed in term of boundary divisors and $\taut$, as long as the $j$-th marked point is not a simple pole.
The same relation also show that $\taut$ is a linear combination of boundary divisors if $B$ contains simple poles.
\end{proof}

In the sequel we will mostly be interested in the case of exact differentials, in which case simple poles cannot appear.

For a boundary stratum $\bd$ we let $\trd{\bd}$ be the pullback of $\trd{B}$ along the inclusion $ \bd\hookrightarrow B$.
On $\bd$ there is a collection of line bundles $\taut_{\pf}^{[i]}$ which are generated by the differentials $\omega^{[i]}$ on each level, see also \cite[Sec. 4.3]{CMZ}. The line bundle $\taut_{\pf}^{[0]}$ is the restriction of $\taut$ and thus contained in $\trd{\bd}$. We will need later that in fact all line bundles $\taut_{\pf}^{[i]}$ are pullbacks of tautological divisorial classes on $B$.

\begin{proposition} Let $\pf$ be a profile.
Then \[
\taut_{\pf}^{[i]}\in \trd[1]{\bd}
\]for $i= -L(\pf),\ldots,0$.
\end{proposition}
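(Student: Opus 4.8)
The plan is to reduce the claim, for every level $i$ of $\pf$, to two facts already recorded in the text: that $\taut_\pf^{[0]}$ is the restriction of $\taut$, and that $\taut$ together with the non-horizontal boundary divisors $\bd[\lG]$ ($\lG\in\BIC$) and the marked-point $\psi$-classes lie in $\trd[1]{B}$. Writing $j\colon\bd[\pf]\hookrightarrow B$ for the inclusion, and recalling that by definition $\trd[1]{\bd[\pf]}=j^*\trd[1]{B}$, it suffices in each case to exhibit $\taut_\pf^{[i]}$ as $j^*\beta$ for some $\beta\in\trd[1]{B}$. For $i=0$ this is immediate, since $\taut_\pf^{[0]}=j^*\taut$ and $\taut\in\trd[1]{B}$ by the preceding proposition, whether or not $B$ has simple poles.

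For a level $i=-r$ with $\pf=(\lG_1,\dots,\lG_k)$ and $1\le r\le k$, I would pass to the two-level graph $\lG_r\in\BIC$, the $r$-th entry of the profile. Since $\bd[\pf]=\bigcap_{s}\bd[\lG_s]$ we have $\bd[\pf]\subseteq\bd[\lG_r]$, and the level line bundles restrict compatibly along this inclusion: $\taut_{\lG_r}^{[0]}\big|_{\bd[\pf]}=\taut_\pf^{[0]}$ (both equal $\calO_{\PMSDS}(-1)$ restricted), and $\taut_{\lG_r}^{[-1]}\big|_{\bd[\pf]}=\taut_\pf^{[-r]}$, the latter because the bottom-level differential recorded by $\calL_{\lG_r}^{[-1]}$ is carried, after further degeneration to $\bd[\pf]$, by the top component among the levels of $\pf$ below the $r$-th level passage, which is precisely level $-r$. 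Next I would invoke from \cite[Sec.~4.3, Prop.~8.2]{CMZ} the level-crossing relation for a single non-horizontal two-level graph: on $\bd[\lG]$ one has, in $\CH[1](\bd[\lG])$, an identity
\[
\taut_{\lG}^{[-1]} \;=\; \taut_{\lG}^{[0]} \;+\; \gamma_{\lG},
\]
where $\gamma_{\lG}$ is a multiple of $\big[\bd[\lG]\big]$ — the coefficient involving the prong factor $\ell_{\lG}$ — possibly corrected by marked-point $\psi$-classes; in particular $\gamma_\lG$ is the restriction of a class in $\trd[1]{B}$. Pulling this identity back along $\bd[\pf]\hookrightarrow\bd[\lG_r]$ and using the two compatibilities above gives
\[
\taut_\pf^{[-r]} \;=\; \taut_\pf^{[0]} \;+\; j^*\gamma_{\lG_r} \;=\; j^*\big(\taut+\gamma_{\lG_r}\big),
\]
and $\taut+\gamma_{\lG_r}\in\trd[1]{B}$, so $\taut_\pf^{[-r]}\in\trd[1]{\bd[\pf]}$. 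As $r$ ranges over $1,\dots,k$ this exhausts all levels of $\pf$, completing the proof. One could equally run an induction, crossing a single level passage at a time via the three-level undegenerations $\delta_r,\delta_{r+1}$ of $\pf$.

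The main obstacle is establishing the level-crossing identity together with its compatibility under further degeneration in exactly the form used above: one has to unwind the construction of the line bundles $\calL_\pf^{[i]}$ in \cite{CMZ} — the level parameters and the prong-matching factors $\ell_{\lG}$ — and, crucially, verify that the correction $\gamma_{\lG}$ lies in the divisorial tautological ring, i.e.\ that it involves no $\psi$-classes at the nodes of $\lG$ beyond those already re-expressed via \cite[Sec.~8]{CMZ} in terms of $\taut$, boundary divisors, and marked-point $\psi$-classes. Once that is granted, the argument above is formal, using only that $j^*\colon\trd{B}\to\trd{\bd[\pf]}$ is by construction surjective, that $\taut_\pf^{[0]}=j^*\taut$, and that each component $\lG_r$ of a profile lies in $\BIC$.
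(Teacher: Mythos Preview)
Your approach is essentially the same as the paper's: reduce to the two-level case via the compatibility $\taut_{\pf}^{[i]}=j_{\pf,\lG}^*\taut_{\lG}^{[-1]}$ with $\lG=\delta_{-i}(\pf)$ (this is \cite[Lemma~7.6]{CMZ}), then use the normal-bundle relation on $\bd[\lG]$ to express $\taut_{\lG}^{[-1]}$ in terms of restrictions of divisorial tautological classes on $B$.

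One inaccuracy worth flagging: your description of the correction term $\gamma_{\lG}$ is not quite right. The actual relation (this is \cite[Lemma~7.1]{CMZ}) reads
\[
\ell_{\lG}\,j_{\lG}^*[\bd[\lG]] \;=\; -\taut_{\lG}^{[0]} + \taut_{\lG}^{[-1]} - \sum_{\lG'\succ\lG} \ell_{\lG'}\,j_{\lG}^*[\bd[\lG']],
\]
so $\gamma_{\lG}$ is the restriction of $\ell_{\lG}[\bd[\lG]]+\sum_{\lG'\succ\lG}\ell_{\lG'}[\bd[\lG']]$, a linear combination of non-horizontal boundary divisors, with no $\psi$-classes appearing. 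This actually makes your ``main obstacle'' disappear: the correction is manifestly in $\trd[1]{B}$ and there is nothing to re-express via \cite[Sec.~8]{CMZ}. Apart from this, your argument matches the paper's.
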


\begin{proof}

Fix $i<0$ and let $\lG:= \delta_{-i}(\pf)$. Then by \cite[Lemma 7.6]{CMZ} we have
\[
\taut^{[i]}_{\pf} = j_{\pf,\lG}^*(\taut^{[-1]}_{\lG}),
\]
where $j_{\pf,\lG}: \bd\hookrightarrow \bd[\lG]$ denotes the inclusion.
It thus suffices to show the claim for a two-level graph $\lG\in\BIC$.
In this case it follows from \cite[Lemma 7.1]{CMZ} that

\[
\ell_{\lG}j_{\lG}^*[\bd[\lG]]=\ell_{\lG}c_1(\nb{\bd[\lG]}{B})= -\taut^{[0]}+\taut^{[-1]} - \sum_{\lG'} \ell_{\lG'} j_{\lG'}^*[\bd[\lG']]
\]
where the sum runs over all two level graphs with  $\lG'\succ\lG$.
\end{proof}

\subsection{Generalized boundary strata}\label{sec-generalized-boundary}
The boundary strata $\bd$ are smooth substacks but we will be forced to consider
infinitesimal thickenings of the boundary strata $\bd$. Let $\pf=(\lG_1,\ldots,\lG_m)$ and $ \pfQ=(\lG_{m+1},\ldots,\lG_n)$ be complementary profiles.
Local equations for the boundary stratum $\bd[\pf+\pfQ]$ are given by 
\[
t_{\lG_1}=\ldots=t_{\lG_n}=0,
\]
where  $t_{\lG_i}$ is a transversal coordinate for the  divisor $\bd[\lG_i]$.

\begin{definition}
We define the {\em generalized boundary stratum} $\bd[\pf,\pfQ]$ to be the thickening of $\bd[\pf+\pfQ]$, which is locally defined by the ideal
\[
(t_{\lG_1},\ldots,t_{\lG_m}, t_{\lG_{m+1}}^{\ell_{\lG_{m+1}}},\ldots,t_{\lG_{n}}^{\ell_{\lG_{n}}}).
\]

\end{definition}

In particular $\bd[\pf,\pfQ]$ is a complete intersection and 
\[
[\bd[\pf,\pfQ]] =\ell(\pfQ) \prod_{i=1}^n [\bd[\lG_i]]\in \trd{B}.
\]

\subsection{Comparing the cohomology at the boundary}\label{sec-coho-boundary}
Our goal is extend the computation of $[\Ann{\PSt}]$ to the Chow ring of $\PMSDS$. We will do this by extending both the vector bundle $\vb$ and the evaluation section. As preparation we need a way to compare the relative homology at a twisted differential in some boundary stratum $\bd\subseteq\PMSDS$ to the homology at a nearby smooth Riemann surface in $\PSt$.
This has been carried out in general in \cite{Fred}, here we only recall the basic notions we need, with the notation adapted to our situation.

Let $x=(X,\omega)$ be a twisted differential compatible with a level graph $\lGh$, i.e., $x$ lies in the interior $\bd[\lGh]^{\circ}$ and denote by $\tilde{X}$ the normalization of the stable curve $X$ together with a marking of all preimages of nodes. We can then filter all connected components of $\tilde{X}$ as well as the  marked zeros and half-legs by level.
We let $\tilde{X}_{[i]}$ be the union of connected components corresponding to level  $i$ and $\tilde{Z}_{[i]}, \tilde{P}_{[i]}$
the marked points of level $i$, sorted into zeros and poles. Thus for example $\tilde{Z}_{[0]}$ consists of all marked zeros on the top level together with the preimages of all nodes  on the top level which do not correspond to simple poles.

We define $H_1^{[i]}(X,\omega):= H_1(\tilde{X}_i\setminus\tilde{P}_{[i]}, \tilde{Z}_{[i]};\CC)$. 
From now on all cohomology groups are considered with complex coefficients.
Note that the dimension $\dim H_1^{[i]}(X,\omega)$ only depends on the level graph $\lGh$.

Let $x'=(\Sigma,\omega')\in\PSt$ be a nearby flat surface with a collection of simple closed curves $\{\sigma_e, e\in E(\lGh)\}$, called {\em vanishing cycles}, such that $X$ is obtained by contracting the vanishing cycles. Away from the nodes we can realize $X$ as a subsurface of $\Sigma$ and thus consider $P=P(\omega), Z=Z(\omega)$ as a subset of $\Sigma$.
Vanishing cycles can be sorted into {\em horizontal} and {\em vertical} vanishing cycles depending on whether the corresponding node is horizontal or vertical. 

In order to relate periods along a degeneration with periods of the limiting twisted differential, we need to compare suitable relative homology groups of $\Sigma$ and $X$.
There are natural restriction maps from the relative homology on $\Sigma$ to the relative homology  of $X$, with special care taken at homology classes that intersect horizontal vanishing cycles.
We will only work with complex coefficients but all constructions in this section can also be carried out with integral coefficients.

We define the {\em non-horizontal cycles at level $0$ }to be 
\[
W_{[0]}:=\{ \gamma\in H_1(\Sigma\setminus P,Z;\CC)\,|\, \langle \gamma,[\sigma_e]\rangle =0 \text{ for any horizontal node $e$ of level $0$}\}.
\]

Here $\langle\cdot,\cdot\rangle$ denotes the  intersection pairing 
\[
 H_1(\Sigma\setminus P,Z;\CC) \times  H_1(\Sigma\setminus Z,P;\CC)\to \CC
\]
and $[\sigma_e]\in H_1(\Sigma\setminus Z,P;\CC)$ denotes the corresponding cohomology class.

On $W_{[0]}$ every cohomology class can be represented by a sum of simple closed curves disjoint from all horizontal vanishing cycles in top level, which we can then restrict a cohomology class in $H_1^{[0]}(X,\omega)$. This yields the specialization morphism
\[
\spc_{[0]}: W_{[0]}\to H_1^{[0]}(X,\omega).
\]

The kernel $\ker \spc_{[0]}$ consists of homology class that can be represented by classes in $X_{[-1]}$.
We set 
\[
W_{[-1]}:= \{\gamma\in H_1(\Sigma\setminus P,Z;\CC)\,|\, \langle \gamma,[\sigma_e]\rangle =0 \text{ for any horizontal node $e$ of level $-1$} \}.
\] 
On $W_{[-1]}$ we can now restrict a homology class to $X_{[-1]}$ and thus obtain the specialization morphism in level $-1$
\[
\spc_{[-1]}: W_{[-1]}\to H_1^{[-1]}(X,\omega).
\]

We thus have a three step filtration
\[
\{0\} \subseteq W_{[-1]}\subseteq \ker\spc_{[0]}\subseteq W_{[0]}\subseteq H_1(\Sigma\setminus P,Z)
\]
with graded pieces
\[
\begin{split}
&W_{[0]}/ \ker\spc_{[0]}\simeq H_1^{[0]}(X,\omega),\\
& W_{[-1]}\simeq H_1^{[-1]}(X,\omega).
\end{split}
\]

\begin{definition} \label{def:rlg} Suppose $x=(X,\omega)$ is a twisted differential compatible with a level graph $\lGh$.
We define 
\[
\begin{split}
&\vb_{\lGh,x}^{[0]}:= \spc_{[0]}(\calV_{x'}\cap W_{[0]}),\\
&\vb_{\lGh,x}^{[-1]}:= \spc_{[-1]}(\calV_{x'}\cap W_{[-1]}),
\end{split}
\]
where $\calV_{x'}$ is the fiber of $\calV$ at $x'$.
\end{definition}

The spaces $\vb_{\lGh,x}^{[0]}$ and $\vb_{\lGh,x}^{[-1]}$ can be interpreted as limits of homology classes as $\Sigma$ degenerates to $X$.

So far we have only treated the case of a two level graph, but everything can be generalized to an arbitrary profile $\pf$.
In that case one has two filtrations
\[
0\subseteq W_{[-L]}\subseteq \ker \spc_{[-L+1]}\subseteq\ldots\subseteq W_{[-1]}\subseteq \ker\spc_{[0]}\subseteq W_{[0]}\subseteq H_1(\Sigma\setminus P,Z),
\]
where $\spc_{[i]}$ now restricts a cohomology class in $W_{[i]}$ to the $i$-th level subsurface.
We define
\[
\vb_{\pf,x}^{[i]}:= \spc_{[i]}(W_{[i]}\calV_{x'}).
\]

On the interior $\bdint$ of a boundary stratum $\bd$, the vector spaces $\vb_{\pf,x}^{[i]}$ glue together to a flat vector bundle $\vb_{\pf}^{[i]}$.

\begin{remark}
The boundary stratum $\bd[\pf]$ can have several components, so in theory $\vb_{\pf}^{[i]}$ could have different ranks on each component. To see that the rank is indeed independent of the component, observe first that for any level graph $\lGh$ 
the rank of $\vb_{\lGh}^{[0]}$ can be computed from $\lG_1=\delta_1(\lGh)$, since we only need to know which paths are in level $0$ and which paths are in lower level.
In the next step we consider $\lG_2:=\delta_2(\lGh)$. The paths on top level of $\lG_2$ consists of all paths in  $\vb_{\lGh}^{[0]}\oplus \vb_{\lGh}^{[-1]}$, thus the dimension of $\vb_{\lGh}^{[-1]}$ only depends on $\lG_1$ and $\lG_2$.
By induction it follows that the dimension of $\vb_{\lGh}^{[i]}$ only depends on the profile $\delta(\lGh)=(\lG_1,\ldots,\lG_{L(\lGh-1)})$.
\end{remark}

\section{The class of strata of exact differentials}
For the rest of this section we fix a globally defined local system $\calV\subseteq \calH$ on a projectivized stratum $\PSt$.
We are now setting up our computation of the closure $\clAnn{}$ in the Chow ring of $\PMSDS$. 
The goal is the following, which has \Cref{thm:main} as a corollary.

\begin{theorem}\label{thm-main-general}
Let $\vb$ be a globally defined local system on a generalized boundary stratum $B$. Then the class of the closure $[\cl{\Ann{B}}]$ is contained in the divisorial tautological ring $\trd{\PMSDS}$.
Furthermore, the class can be computed explicitly.
\end{theorem}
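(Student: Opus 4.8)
The plan is to extend the vector bundle $\calV^*\otimes\calO(1)$ and its defining section from $\PSt$ to $\PMSDS$ (more generally to $B$), analyze the extraneous components of the extended zero locus, and remove them by a controlled sequence of blowups. By \Cref{prop:classStratum} we already know $[\Ann{\PSt}] = \taut^r$ on the open stratum, so on $\PMSDS$ the extended section $\cl s$ has zero locus $Z(\cl s) = \cl{\Ann{}} \cup (\text{extraneous locus})$, and the first step is to identify the extraneous locus precisely. Using the specialization morphisms $\spc_{[i]}$ and the limit local systems $\vb_{\pf,x}^{[i]}$ from \Cref{def:rlg}, I would show that at a point of $\bdint[\pf]$ the extended section vanishes exactly when the \emph{top-level} restriction $\vb_{\pf}^{[0]}$ fails to have full rank, i.e.\ when the twisted differential is "exact on top level but the lower levels absorb some of the rank." Concretely this should exhibit the extraneous locus as a union, over certain profiles $\pf$, of loci of the form (closure of $\Ann{}$ on the generalized stratum $B_\pf^{[i]}$) pulled back via the clutching maps in diagram \cref{diag:roof} — this is where the reduction to generalized boundary strata $B$ in the statement (rather than just $\PMSDS$) becomes essential for an inductive argument.

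Next I would order the extraneous components so that each is blown up before any component it contains, blow them up one at a time, and track the vanishing order of $\cl s$ along each new exceptional divisor. The key local computation is that near a generic point of an extraneous component the section $\cl s$ vanishes to a predictable order in the transversal coordinates $t_{\lG}$ (controlled by the prong data $\ell_{\lG}$ and the rank drop of $\vb_{\pf}^{[0]}$), so after dividing $\cl s$ by the appropriate power of the defining equation of the exceptional divisor one obtains a section $\cl s'$ of a twisted bundle $\calV^*\otimes\calO(1)\otimes\calO(-\sum a_j E_j)$ whose zero locus, on the (iterated) blowup, is precisely the proper transform of $\cl{\Ann{}}$ and nothing else. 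Its class is then the top Chern class of that twisted bundle, computed using the vanishing of $c_\bullet(\calV)$ (the \JaM-style argument / \Cref{prop:classStratum}) together with the excess-intersection formula for blowups; pushing forward along the composition of blow-down maps (Fulton's blowup formula, expressing $[\text{proper transform}]$ via Segre classes of the normal cones of the centers) yields $[\cl{\Ann{}}] \in \CH(\PMSDS)$ as an explicit polynomial in $\taut$, the $\bd[\lG]$, and the classes recursively produced from the generalized strata $B_\pf^{[i]}$ — which by induction on dimension already lie in $\trd{}$. Finiteness of the recursion follows because each blowup center lives on a boundary stratum of strictly smaller dimension.

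The main obstacle, as flagged in the outline, is that the extraneous components need not be reduced or smooth: the natural scheme structure on them carries the non-reduced stacky multiplicities $\ell_{\lG}$ coming from the prong-matching, so the iterated blowup is a priori singular and the naive blowup formula does not apply. I would handle this exactly as the outline promises — by writing down explicit local defining equations (monomials $t_{\lG}^{\ell_\lG}$ in transversal coordinates, as in the definition of the generalized boundary stratum $\bd[\pf,\pfQ]$) and checking that the singularities are of a mild toric/quotient type for which the Chow groups still carry an intersection product and proper transforms of cycles are computable; the factors $\ell_{\lG}$ and $\ell_\pf$ then appear as explicit multiplicities in the final formula. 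A secondary subtlety is verifying that blowing up in the chosen order really does separate all extraneous components from $\cl{\Ann{}}$ (and from each other) — this requires the partial-order combinatorics of profiles from \cite{CMZ} recalled above, ensuring that the centers, once we reach them, are smooth (or mildly singular) divisors in the running blowup. Once these local analyses are in place, the tautological conclusion and the explicit algorithm both follow formally.
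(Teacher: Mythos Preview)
Your proposal is correct and follows essentially the same route as the paper: extend the section via the Deligne extension, identify the extraneous components of its zero locus levelwise (the paper's \Cref{prop:zerolocus}), blow them up in an order compatible with the partial order on two-level graphs, divide out the exceptional divisors to obtain a regular section on the final blowup (the paper's \Cref{prop:finalblowup}), and push down using Segre classes, recursing on the depth of the profile. The only substantive thing the paper adds that you leave implicit is a packaged formalism for the blowup bookkeeping --- it tracks not just the class but the full \emph{total Chern class} $\chp{V}{Z}$ (class plus normal bundle) and shows that the blowup centers together with all their mutual intersections form a \emph{system of regular embeddings} of \emph{Alexander stacks} (an appendix develops this), which is exactly what is needed to make ``the singularities are mild enough'' and ``the chosen order separates the components'' precise; you would need to build something equivalent to carry out your final two paragraphs.
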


The outline for this section is as follows. We will only work with a stratum $\PMSDS$ instead of a generalized boundary stratum, but the argument goes through verbatim in the more general case. We start by extending both the vector bundle $\calV$ as well as the evaluation section to $\PMSDS$ by passing to the Deligne extension $\overline{\calV}$ as carried out in \cite[Sec. 6.3]{CMZ}. The zero locus of the the extended section contains $\cl{\Ann{\PMSDS}}$ as well as several extraneous component of varying dimensions.  We determine the extraneous components explicitly as  zero loci of sections of  vector bundles on a generalized boundary stratum; similar to the original bundle $\vb$. This  sets up the beginning of a recursive computation.

\subsection{The Deligne extension of $\calV$}\label{sec:Deligne}
The local system $\calV$ has unipotent monodromy along the normal crossing divisor $\PMSDS\setminus \PSt$.
In this situation the Deligne extension $\overline{\calV}$ is an extension of $\calV$ to $\PMSDS$ to a vector bundle on $\PMSDS$ admitting a connection with regular singularities along the boundary.
We will need a local description of $\overline{\calV}$. By choosing a local analytic coordinate system for $\PMSDS$ near the boundary, the complement $\PMSDS\setminus \PSt$ is isomorphic to $(\Delta^*)^k\times \Delta^l$.
Let $T_i$ be the monodromy of $\calV$ corresponding to the $i$-th generator of $(\Delta^*)^k$. We have $(I-T_i)^2=0$ and thus $N_i:= - \log T_i= I-T_i$.
 We choose a base point $x'\in (\Delta^*)^k\times \Delta^l$ and set $V=\calV_{x'}$.
 Any cycle $v\in V$ can be extended to flat section in a small neighborhood of $x$. In order to extend $v$ to a section on all of $(\Delta^*)^k\times \Delta^l$ we need to make $v$ monodromy invariant.
Consider the universal cover 
\[
\pi:\mathbb{H}^k\times \Delta^l \to (\Delta^*)^k\times \Delta^l, (z_1,\ldots,z_k,q_1,\ldots,q_l)\mapsto (e^{2\pi iz_1},\ldots, e^{2\pi iz_k},q_1,\ldots,q_l).
\]
The section $e^{2\pi i \sum_{i=1}^k z_iN_i}v$ of $\pi^*\calV$ is monodromy invariant and thus descends to a section $\overline{v}$ for $\calV_{|(\Delta^*)^k\times \Delta^l}$.
The Deligne extension $\overline{\calV}$ is the unique extension of $\calV$ that is locally over $\Delta^k\times \Delta^l$ trivialized by the sections $\overline{v}$ for $v\in \calV_{x'}$.
We call $\cl{v}$ the Deligne extension of the cycle $v$ and stress that it depends on the choice of local coordinates for $\PMSDS$ near the boundary.

The following was proven in \cite{CMZ} for the local system $\calH$ and then follows for any sub-local system $\calV\subseteq \calH$ by restriction.
\begin{proposition}
The evaluation morphism $\ev_{\calV}:\calV\otimes \calO_{\PSt}(-1)\to\calO_{\PSt}$ extends to a morphism
\[
\overline{\ev}_{\calV}:\overline{\calV}\otimes\calO_{\PMSDS}(-1)\to\calO_{\PMSDS}.
\]
Suppose $x=(X,\omega)\in\bd[\pf]^{\circ}$, $x'\in\PSt$ a nearby point and $v\in \calV_{x'}\cap W_{[0]}$. Then
\[
\overline{\ev}_{\calV}(\overline{v}\otimes\omega)= \int_{\spc_{[0]}(v)}\omega.
\]
In particular, $\overline{\ev}_{\calV}$ vanishes if $v\in \calV_{x'}\cap W_{[-1]}$.
\end{proposition}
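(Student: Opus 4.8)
The plan is to reduce the statement for a general sub-local system $\calV\subseteq\calH$ to the case $\calV=\calH$, which is exactly the result proven in \cite{CMZ}, using the functoriality of the Deligne extension with respect to morphisms of local systems with unipotent monodromy.

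First I would check that $\overline{\calV}$ is naturally a subbundle of $\overline{\calH}$, compatibly with the inclusion $\calV\subseteq\calH$ over the open part $\PSt$. In the local model $\PMSDS\setminus\PSt\cong(\Delta^*)^k\times\Delta^l$ used to define the Deligne extension, a flat section is made monodromy invariant by the recipe $v\mapsto e^{2\pi i\sum z_iN_i}v$ with $N_i=I-T_i$. Since the monodromy operators $T_i$ of $\calH$ preserve the subspace $\calV_{x'}$ (these $T_i$ being precisely the monodromy of $\calV$), so do the $N_i$, and hence the monodromy-invariant section $\overline{v}$ attached to $v\in\calV_{x'}$ is among the trivializing sections $\overline{w}$ of $\overline{\calH}$. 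Thus the trivializing sections of $\overline{\calV}$ span a subbundle of $\overline{\calH}$, and moreover $\overline{v}$ and $\spc_{[0]}(v)$ are the same whether $v$ is viewed inside $\calV_{x'}$ or inside $\calH_{x'}$.

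Granting this, the extended morphism $\overline{\ev}_{\calH}\colon\overline{\calH}\otimes\calO_{\PMSDS}(-1)\to\calO_{\PMSDS}$ of \cite[Sec.~6.3]{CMZ} restricts along $\overline{\calV}\hookrightarrow\overline{\calH}$ to a morphism $\overline{\ev}_{\calV}\colon\overline{\calV}\otimes\calO_{\PMSDS}(-1)\to\calO_{\PMSDS}$; this restriction extends $\ev_{\calV}$ because it already does so on the dense open stratum $\PSt$, and since $\PMSDS$ is smooth and $\calO_{\PMSDS}$ has no sections supported on the boundary, such an extension is unique, so it is the asserted $\overline{\ev}_{\calV}$. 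The period formula $\overline{\ev}_{\calV}(\overline{v}\otimes\omega)=\int_{\spc_{[0]}(v)}\omega$ for $x=(X,\omega)\in\bd[\pf]^{\circ}$ and $v\in\calV_{x'}\cap W_{[0]}$ is then literally the formula of \cite{CMZ} applied to the same cycle $v\in\calH_{x'}\cap W_{[0]}$, since both $\overline{v}$ and $\spc_{[0]}(v)$ are unchanged under the inclusion. Finally, the ``in particular'' statement follows from the three-step filtration $W_{[-1]}\subseteq\ker\spc_{[0]}\subseteq W_{[0]}$ recalled before \Cref{def:rlg}: if $v\in\calV_{x'}\cap W_{[-1]}$ then $v\in\ker\spc_{[0]}$, so $\spc_{[0]}(v)=0$ and the right-hand side vanishes.

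The substantive input, which we import rather than reprove, is the case $\calV=\calH$ in \cite{CMZ}: that argument is a period estimate showing that in the Deligne trivialization the entries of the evaluation matrix extend holomorphically across the boundary divisor, together with the identification of the boundary value of $\int_{\overline{v}}\omega'$ with the limit period $\int_{\spc_{[0]}(v)}\omega$ as $x'$ degenerates to $x$ (the cycle ``drops to lower level'' and contributes $0$ precisely when $v\in W_{[-1]}$). I expect the only point in the present argument that needs a word of justification to be the functoriality of the Deligne extension for $\calV\hookrightarrow\calH$, which as indicated is immediate from the fact that $\calV_{x'}$ is stable under the monodromy operators.
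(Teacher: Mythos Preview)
Your proposal is correct and follows exactly the approach indicated in the paper: the sentence immediately preceding the proposition states that the result ``was proven in \cite{CMZ} for the local system $\calH$ and then follows for any sub-local system $\calV\subseteq \calH$ by restriction,'' and you have simply fleshed out this restriction argument (functoriality of the Deligne extension under $\calV\hookrightarrow\calH$, compatibility of $\overline{v}$ and $\spc_{[0]}$, and the filtration for the vanishing claim). There is nothing to add.
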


It follows that the evaluation morphism vanishes identically at $x$ if and only if the periods over all cycles in $\vb_{\pf,x}^{[0]}$ vanishes.

Everything so far can be repeated for the vector bundles $\vb_{\pf}^{[i]}$ on $\bdint$. The boundary $\bd\setminus\bdint$ is a normal crossing divisor and both $\vb_{\pf}^{[i]}$ as well as the natural evaluation morphism
\[
\ev_{\pf,[i]}:\vb_{\pf}^{[i]} \otimes \calO_{\bdint}(-1)\to \calO_{\bdint}: \gamma\otimes\omega\mapsto \int_{\gamma} \omega,
\]
can be extended to $\bd$ using the Deligne extension $\cl{\vb}_{\pf}^{[i]}$.

\begin{proposition}\label{prop:vanishing} Let $\vb$ be a globally defined local system on a stratum $\PSt$.
The Deligne extension $\cl{\vb}_{\pf}^{[i]}$ has vanishing Chern classes in $\CH(\bd)$.

\end{proposition}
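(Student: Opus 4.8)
The plan is to propagate the structure from the earlier lemma (globally defined local systems have vanishing Chern classes on $\PSt$) to the boundary. On $\PSt$ the point was that a globally defined $\calV$ is built, via extensions, out of trivial local systems and the local system $\calH_1$ with fiber $H_1(X;\CC)$, and that $\calH_1$ extends to $E\oplus E^*$ with total Chern class $c(E)c(E^*)=1$ by Mumford's relation. I would repeat this on the boundary stratum $\bd$, replacing ``extension of local systems'' by ``extension of their Deligne extensions''. This is legitimate because the Deligne extension is an exact functor on the category of local systems with unipotent monodromy along the boundary divisor, and every sub-local system of $\calH$ has unipotent monodromy (see \Cref{sec:Deligne}); consequently a short exact sequence, and more generally a finite filtration by sub-local systems, passes to a finite filtration of the Deligne extensions by subbundles with the expected graded pieces, along which total Chern classes are multiplicative.

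The first step is to identify $\vb_{\pf}^{[i]}$, a local system on the interior $\bdint$, as a globally defined local system relative to the level-$i$ generalized stratum. Recall $\vb_{\pf,x}^{[i]}=\spc_{[i]}(W_{[i]}\cap\calV_{x'})$. If $\calV\subseteq\calH_{\res}$ then $\calV$ is trivial, hence so is $\vb_{\pf}^{[i]}$. If instead $\calH_{\abs}\subseteq\calV$, then $\spc_{[i]}(W_{[i]}\cap\calH_{\abs})$ is the absolute homology local system of the level-$i$ components of the normalized curves, it is contained in $\vb_{\pf}^{[i]}$, and the quotient is spanned by images of relative cycles joining labelled marked zeros, hence is a trivial local system; the needed facts about the specialization maps $\spc_{[i]}$ are proved in \cite{Fred}. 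Applying the proof of the earlier lemma to the level-$i$ generalized stratum, $\vb_{\pf}^{[i]}$ then admits a finite filtration by sub-local systems whose graded pieces are trivial local systems together with the local system $\calH_1^{[i]}=\bigoplus_{\ell(v)=i}H_1(\tilde{X}_v;\CC)$ of first homology of the level-$i$ components. By the first paragraph it therefore suffices to show that the Deligne extensions of a trivial local system and of $\calH_1^{[i]}$ have vanishing Chern classes in $\CH(\bd)$; for a trivial local system this is immediate, as its Deligne extension is the trivial bundle.

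For $\calH_1^{[i]}$ I would argue as follows. Using the diagram~\eqref{diag:roof} — whose vertical maps are finite, so that $\QQ$-coefficient Chow pullback along them is injective — it is enough to prove the vanishing after pulling back to the cover $D^s_{\pf}$; there $\calH_1^{[i]}$ becomes the pullback along $f_{\pf}\circ p_{\pf}$ of the local system $\bigoplus_{\ell(v)=i}R^1\pi_{v,*}\CC$ on $\prod_{\ell(v)=i}\Mgnbar[g_v,n_v]$. The canonical extension of $R^1\pi_{v,*}\CC$ over $\Mgnbar[g_v,n_v]$ is, up to Chern-class-preserving extensions, $E_v\oplus E_v^*$, and canonical extensions commute with pullback along morphisms of normal-crossing compactifications with matching log structures — exactly the kind of compatibility established for $\calH$ in \cite[Sec. 6.3]{CMZ}. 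Hence the pullback of $\cl{\vb}_{\pf}^{[i]}$ to $D^s_{\pf}$ is the pullback of $\bigoplus_{\ell(v)=i}(E_v\oplus E_v^*)$, whose total Chern class is $\prod_{\ell(v)=i}c(E_v)c(E_v^*)=1$ by Mumford's relation in $\CH(\Mgnbar[g_v,n_v])$, and we conclude $c(\cl{\vb}_{\pf}^{[i]})=1$ in $\CH(\bd)$. I expect the main obstacle to be the bookkeeping in the first two steps: carefully identifying the graded pieces of $\vb_{\pf}^{[i]}$ after specialization with honest local systems on the level-$i$ generalized stratum (this requires the fine structure of the restriction maps from \cite{Fred}), and checking that these identifications, the filtrations, and the several forgetful maps involved are all mutually compatible with passage to Deligne extensions, so that the exact-functor argument can be applied fibrewise over $\bd$.
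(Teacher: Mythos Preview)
Your proposal is correct and follows essentially the same route as the paper: reduce to Mumford's relation $c(E)c(E^*)=1$ by passing to a finite cover of $\bd$ on which $\cl{\vb}_{\pf}^{[i]}$ is pulled back from a product of moduli spaces of curves. The paper's version is terser --- it simply marks all edges to obtain a cover $\widetilde{\bd[\lGh]}$ on which the bundle is a pullback from $\Mgn[\lGh]$ --- whereas you spell out the filtration with trivial and $\calH_1^{[i]}$ graded pieces and invoke the roof diagram~\eqref{diag:roof}; but the cover $D^s_{\pf}$ you use plays the same role as the paper's $\widetilde{\bd[\lGh]}$, and the underlying argument is identical.
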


\begin{proof}
We first note that the statement is immediate in rational {\em cohomology} since the Deligne extension has a connection with nilpotent curvature.
To see the statement in the Chow ring we first consider the case of $B=\PSt$ an ordinary stratum and $\pf$ the empty profile.
In this case $\vb_{\star}^{[0]}$ is an extension of a trivial bundle by the bundle of absolute cohomology, which has trivial Chern classes by Mumfords computation \cite{mumford-enumerative} of the Chern classes of the Hodge bundle.

Now for the general case $\bd[\pf]$, it suffices to show that the statement for every connected component $\bd[\lGh]$ of $\bd[\pf]$. We consider the finite covering  $\widetilde{\bd[\lGh]}$ by marking all edges. On  $\widetilde{\bd[\lGh]}$ the bundle $\vb_{\pf}^{[i]}$ is the pullback of the Deligne extension from $\Mgn[\lGh]$, which has zero Chern classes by Mumfords computation again.
\end{proof}

Thus far we have only worked with a projectivized stratum but
all the local systems $\calH_{\abs},\calR,\calH_{\ramp}$ can be defined in an analogous way on the smooth part of  generalized stratum $B$. For any flat vector bundle $\vb$ on $B$ we can define $\vb^{[i]}_{\pf}$ on the smooth part of a boundary stratum $\bd$ and can be extended to $\bd$ using the Deligne extensions. The computation of \Cref{prop:vanishing} for globally defined local systems extends to this more general situation and shows that $c_i(\cl{\vb^{[j]}_{\pf}})=0$ for all $j$.

 \subsection{Coordinates near the boundary and log periods}
Using the Deligne extension of $\calH$ we can construct a local analytic (orbifold) coordinate system  $\PMSDS$ near  a boundary point $(X,\omega)\in \bd[\lGh]^{\circ}$ for some level graph $\lGh$.

First assume that $\lGh$ has no horizontal nodes.

We write $\omega = (\omega_i)_{i\in 0,\ldots, -L(\lG)}$.
For every level $i$ we choose a basis $a_i=(a_{i,1},\ldots,a_{i,l_i})$  for the homology $\calH_{\lGh}^{[i]}$ of level $i$.
For each cycle $a_{i,k}$ on $X$  we let $\cl{a}_{i,k}$ be a cycle on the Deligne extension $\cl{\calH}$ that specializes to $a_{i,k}$.
We also need the rescaling factor of level $i$
\[
\rescl:= \prod_{i=1}^{L(\lGh)} t_i^{\ell_i},
\]
where $\ell_i=\ell_{\delta_i(\lGh)}$ and $t_i$ is a coordinate transversal to the divisor $\bd[\delta_i(\lGh)]$.

We then define the {\em log periods of level $i$} $\alpha_i = (\alpha_{i,1},\ldots,\alpha_{i,l_i})$ by the formula 
\[
\alpha_{i,k} = \dfrac{1}{\rescl} \int_{\cl{a}_{i,k}} \Omega,
\]
where $\Omega$ is the universal family of differentials.

It was shown in \cite{Fred}
that 
\[
(t_1,\ldots,t_{L(\lG)}, \alpha_0,\ldots,\alpha_{L(\lG)})
\]
is a local coordinate system near $(X,\omega)$ and furthermore
\[
\alpha_{i,k}(X,\omega) = \int_{a_{i,k}} \omega.
\]

Let $\pf$ be some profile and $\lGh$  a level graph, which is a  degeneration of $\pf$.  We now describe the evaluation morphism 
\[
\cl{\ev}_{\pf}^{[i]}:\cl{\vb}_{\pf}^{[i]} \otimes \calO_{\bd}(-1)\to\calO_{\bd}
\]
 near a point in $\bd[\lGh]$ in terms of log periods.
 
 On $\lGh$ the $i$-th level has split into levels $j,\ldots, j-k$ for some $j,k$ and we choose a basis of cycles $a_l$ for $\vb_{\lGh}^{[l]},\, j \geq l \geq j-k$.
In order to compute the evaluation morphism $\cl{\ev}_{\pf}^{[i]}$ we need to work with a  frame for $\cl{\vb}_{\pf}^{[i]}$.
On the other hand to compute log periods we use a frame for $\cl{\vb}$. 
For a cycle $a_{l,m}$ of level $l$, the different extensions to cycles on the Deligne extensions $\cl{\vb}$ and $\cl{\vb}_{\pf}$ only differ in levels lower than $l$ and thus $(X,\omega)\in \bd[\lGh]^{\circ}$ has the same periods over both possible extensions.

 In a local frame the evaluation morphism $\cl{\ev}_{\pf}^{[i]}$ is given by 
\begin{equation}\label{eq-rescaled-evaluation}
\left(\top_{l,j}\alpha_{l,m}\right)_{j-k \leq l \leq j,m},
\end{equation}
where we recall
$\top_{l,j}:= \dfrac{\rescl[l]}{\rescl[j]} = \prod_{s=j-k}^{j} t_s^{\ell_s}$.

{\em Extension to horizontal nodes:}
The treatment of horizontal nodes is slightly different.
 For  every horizontal node $e$ of level $i$ we pick a cycle $\rho_e$ crossing $e$ once and not crossing any other horizontal nodes. 
 We can make it almost monodromy invariant by considering 
 \[
 \hat{e} = e - \sum_{e'\neq e} \langle \rho_e,\lambda_{e'}\rangle [\lambda_{e'}]\ln(s_{e'})
 \]
 The cycle $\hat{e}$ has only monodromy around $\lambda_e$.
 Then
 \[
 q_e := \exp\left({2\pi i\dfrac{\int_{\hat{e}} \Omega}{\int_{\lambda_e} \Omega}}\right)
 \]
 
is a coordinate.
(Our normalization differs slightly from \cite{CMZ}).

\subsection{The zero locus of the evaluation section}

Our next goal is to describe the zero locus of the evaluation morphism $\cl{\ev}_{\pf,[i]}:\vb_{\pf}^{[i]}\otimes\calO(-1)\to\calO$.
To lighten the notation we fix a projectivized stratum $B=\PSt$ and a globally defined flat vector bundle $\vb$.

We first need to introduce a few more spaces and infinitesimal thickenings, which will appear in the computation. We define 
\[
\bdexl{i} := \cl{\{ (X,\omega)\in\bd[\pfP,\pfQ]^{\circ}\,:\, \omega \in \ANN(\vb_{\pf+\pfQ}^{[i]}) \}},
\]
where $\bd[\pfP,\pfQ]^{\circ}$ is the open set corresponding to differentials with profile $\pfP+\pfQ$.
Here we implicitly pushed $\vb_{\pf+\pfQ}^{[i]}$ forward under the inclusion $\bd[\pf+\pfQ]\subseteq \bd[\pf,\pfQ]$,
where we recall the generalized boundary stratum $\bd[\pf,\pfQ]$ from \Cref{sec-generalized-boundary}.

If $\pfQ$ is the empty profile, we simply write $\bdexl[\pfP]{i}$.
Rephrased in this notation our main goal is to compute $\bdexl[\star]{0}$ but it turns out that in our recursive computation we will need to compute the classes of all subspaces $\bdexl{i}$.

The space $\bdexl{i}$ is an example of a linear subvariety in the sense of \cite{BDG}. Thus the description of \cite[Sec. 4]{BDG} allows to describe the equations near a boundary point explicitly.
In our situation the equations are particularly simply, since if a cycle in $\vb_{\pf}^{[i]}$ crosses a horizontal node, the corresponding vanishing cycle is also contained in $\vb_{\pf}^{[i]}$. 
Thus if on the open part $\bd[\pf,\pfQ]^{\circ}$ the defining equations for $\bdexl{i}$ are given by the vanishing of a collection of periods $a_1,\ldots, a_n$, the local equations near the boundary are given by the corresponding log period $\alpha_1,\ldots,\alpha_n$.
We now give an explicit description of the equations.

Let $\pf,\pfQ$ two complementary profiles and consider $\bdexl{i}\subseteq \bd[\pf,\pfQ]$. Fix $(X,\omega)\in \bd[\lGh]$ for some level graph  $\lGh$, which is a degeneration  of $\pf+\pfQ$ and let  $I\subseteq \{0,\ldots,-L(\lGh)\}$ be the collection of all levels of $\lGh$ that undegenerate to level $i$, when undegenerating $\lGh$ to the profile $\pf+\pfQ$.
We choose $\alpha= (\alpha_{1},\ldots,\alpha_{n})$ to be the collection of all log  periods over a basis of  $\oplus_{j\in I}\vb^{[j]}_{\lGh}$.
Then local equations for $\bdexl{i}\subseteq \bd[\pf,\pfQ]$ near $(X,\omega)$ are given by 
\begin{equation}\label{eq:bdexl}
\alpha_1=\ldots=\alpha_n =0.
\end{equation}

Since the log periods $\alpha$ are part of a local coordinate system, $\bdexl{i}$ is regularly embedded in $\bd[\pf,\pfQ]$.

Given a subset $J\subseteq \{0,\ldots, -L(\pf)\}$. We also define
\[
\bdexl{J} = \cap_{i\in J} \bdexl{i}.
\]
Note the intersection is algebraically transverse inside $\bd[\pf,\pfQ]$ (see \Cref{appendix-transverse} for the definition of algebraically transverse).

If $\pfQ$ is non-empty, then $\bdexl{i}$ is non-reduced and $\bdexl[\pf+\pfQ]{i}$ is the  induced reduced substack. Furthermore, $\bdexl{i}$ has geometric multiplicity $\ell(\pfQ)$ along  $\bdexl[\pf+\pfQ]{i}$ and $\bdexl[\pf,\pfQ]{i}$  is regularly embedded in $\bd[\pf,\pfQ]$ with normal bundle $\nb{\bdexl{i}}{\bd[\pf,\pfQ]}$.
We thus have 
\begin{gather*} 
[\bdexl{i}] = \ell(\pfQ)[\bdexl[\pf+\pfQ]{i}]\in \CH(\bd),\\c(\nb{\bdexl{i}}{\bd[\pf,\pfQ]})\cap [\bdexl{i}]  = \ell(\pf[Q]) c(\nb{\bdexl[\pf+\pfQ]{i}}{\bd[\pf+\pfQ]})\cap [\bdexl[\pf+\pfQ]{i}]\in \CH(\bdexl[\pf+\pfQ]{i}).
\end{gather*}

For later use we notice that the collection of substacks $\bdexl{i}$ behaves well with respect to intersection.
\begin{proposition}\label{prop:closedintersection}
Let $\pf$ be a profile and $\pfQ,\pfR\in \PF[(\pf,i)]$.
Suppose \[
I\subseteq \{i,\ldots,i-L(\pfQ)+1\}, J\subseteq \{i,\ldots, i-L(\pfR)+1\}
\] are subsets. Then
\[
\bdexl{I}\cap\bdexl[\pf,\pfR]{J} = \bdexl[\pf,\pfQ+\pfR]{K}
\]
for some subset $K\subseteq\{i,\ldots,i-L(\pfQ+\pfR)+1\}$.
\end{proposition}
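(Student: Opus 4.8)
The plan is to reduce everything to the local description of the substacks $\bdexl{I}$ in terms of log periods, as recorded in \eqref{eq:bdexl}, and then to identify the intersection by comparing defining ideals at a common boundary point. First I would observe that both sides of the claimed identity are closures of their restrictions to the open locus $\bd[\pf,\pfQ+\pfR]^{\circ}$, so it suffices to check the equality on the level of reduced substacks after intersecting, together with the statement that the right-hand side is of the asserted form; since all the spaces involved are regularly embedded in $\bd[\pf,\pfQ+\pfR]$ (again by \eqref{eq:bdexl}, the log periods being part of a coordinate system) and the intersections in question are algebraically transverse, it is enough to match the supports and then invoke transversality to upgrade to an equality of substacks with the correct multiplicities.

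Next I would fix a level graph $\lGh$ that is a degeneration of $\pf+\pfQ+\pfR$ and a point $(X,\omega)\in\bd[\lGh]$, and spell out the local equations of the two factors. Let $I'\subseteq\{0,\ldots,-L(\lGh)\}$ be the set of levels of $\lGh$ undegenerating into the levels indexed by $I$ when $\lGh$ is undegenerated to $\pf+\pfQ$, and similarly $J'$ for $J$ relative to $\pf+\pfR$. By the description preceding \eqref{eq:bdexl}, near $(X,\omega)$ the substack $\bdexl{I}$ is cut out by the vanishing of all log periods over a basis of $\bigoplus_{j\in I'}\vb^{[j]}_{\lGh}$, and likewise $\bdexl[\pf,\pfR]{J}$ by the log periods over a basis of $\bigoplus_{j\in J'}\vb^{[j]}_{\lGh}$. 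Hence the intersection is locally cut out by the vanishing of all log periods over a basis of $\bigoplus_{j\in I'\cup J'}\vb^{[j]}_{\lGh}$. I would then let $K$ be the set of levels of the profile $\pf+\pfQ+\pfR$, among those in $\{i,\ldots,i-L(\pfQ+\pfR)+1\}$, that contain at least one level in $I'\cup J'$ after undegeneration; by construction the levels in $I'\cup J'$ are exactly the levels of $\lGh$ undegenerating into $K$, so the local equations for $\bdexl[\pf,\pfQ+\pfR]{K}$ at $(X,\omega)$ coincide with those just obtained for the intersection.

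The one point requiring care — and what I expect to be the main obstacle — is verifying that this combinatorially defined $K$ is well-defined independently of the chosen point $(X,\omega)$ and the chosen degeneration $\lGh$, i.e.\ that $I'\cup J'$ really is saturated with respect to undegeneration to $\pf+\pfQ+\pfR$ (otherwise the left-hand side would not be a union of $\bdexl{}$-type strata). This follows because $I$ is saturated for undegeneration to $\pf+\pfQ$ and $J$ for undegeneration to $\pf+\pfR$, and because the lattice structure on profiles (via the partial order $\succ$ and the direct-sum decomposition, as in \cite[Section 5]{CMZ}) makes the operations ``pass to a common refinement $\lGh$'' and ``saturate'' commute: a level of $\pf+\pfQ+\pfR$ either sits inside a single level of $\pf+\pfQ$ and a single level of $\pf+\pfR$ or is itself obtained by splitting, and in either case membership of its preimages in $I'\cup J'$ depends only on $I$ and $J$. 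Once $K$ is seen to be well-defined, the supports match on the open dense locus, closures agree, and algebraic transversality of $\bdexl{I}\cap\bdexl[\pf,\pfR]{J}$ inside $\bd[\pf,\pfQ+\pfR]$ (the log periods over $I'$ and those over $J'$ being part of a joint coordinate system, with overlap exactly the log periods over $I'\cap J'$) gives the equality of substacks, completing the proof.
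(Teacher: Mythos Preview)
Your approach is correct and essentially the same as the paper's: both compare local defining ideals via log periods inside $\bd[\pf,\pfQ+\pfR]$ and identify the intersection with $\bdexl[\pf,\pfQ+\pfR]{K}$. The paper sidesteps your well-definedness concern by defining $K$ directly via the undegeneration maps $\pf+\pfQ+\pfR\rightsquigarrow\pf+\pfQ$ and $\pf+\pfQ+\pfR\rightsquigarrow\pf+\pfR$ (a level of $\pf+\pfQ+\pfR$ lies in $K$ iff it maps into $I$ or into $J$), which is manifestly independent of any auxiliary degeneration $\lGh$; your transversality step is likewise unnecessary, since matching the log-period equations already yields equality as substacks.
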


\begin{proof}
We have the containment $\bdexl{I}\cap\bdexl[\pf,\pfR]{J}\subseteq  \bd[\pf,\pfQ+\pfR]$.
At a point  $(X,\omega)$  in $\bd[\pf,\pfQ+\pfR]$ the local defining equations for $\bdexl{I}\cap\bdexl[\pf,\pfR]{J}\subseteq  \bd[\pf,\pfQ+\pfR]$  are given by log periods
$\alpha=0$
for $\vb_{\pf+\pfQ+\pfR}$ 
 over a basis of cycles corresponding to levels that undegenerate to a level in $I$ under $\pf+\pfQ+\pfR\rightsquigarrow \pf+\pfQ$ or to a level in $J$ under $\pf+\pfQ+\pfR\rightsquigarrow\pf+\pfR$.
If we denote $K$ the set of all these levels of $\pf+\pfQ+\pfR$, then
\[
\bdexl{I}\cap\bdexl[\pf,\pfR]{J} = \bdexl[\pf,\pfQ+\pfR]{K}.
\]
\end{proof}

\begin{proposition}\label{prop:zerolocus}
The zero locus $Z(\cl{\ev}_{\pfP,i}) \subseteq\bd$ is the scheme-theoretic union of 
\[
Z(\cl{\ev}_{\pfP,i}) =  \bdexl[\pfP]{i}\cup \bigcup_{\lG\in \PFi[(\pfP,i)]{1}} \bdexl[\pfP,\lG]{i},
\]
where we recall that $\PF[{}^1(\pfP,i)]$ is the collection of non-horizontal two-level graphs  $\lG$  such that the degeneration $\pfP\degen\pf+\lG$ is obtained by splitting the $i$-th level.

\end{proposition}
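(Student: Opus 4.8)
The statement is local on $\bd=\bd[\pfP,\pfQ]$ near a point $(X,\omega)$ lying in some stratum $\bd[\lGh]$ with $\lGh$ a degeneration of $\pfP+\pfQ$. So the plan is to fix such a point, write down the local coordinate system $(t_1,\ldots,t_{L(\lGh)},\alpha_0,\ldots,\alpha_{-L(\lGh)})$ from the previous subsection, express $\cl{\ev}_{\pfP,i}$ in this chart via formula \eqref{eq-rescaled-evaluation}, and then read off the scheme-theoretic zero locus directly from the resulting monomial-times-coordinate equations. Concretely, if $j,\ldots,j-k$ are the levels of $\lGh$ into which level $i$ of $\pfP$ has split, then in a local frame $\cl{\ev}_{\pfP,i}$ is given by the collection of functions $\top_{l,j}\,\alpha_{l,m} = \bigl(\prod_{s=j-k}^{j} t_s^{\ell_s}\bigr)\alpha_{l,m}$ for $j-k\le l\le j$ and $m$ running over a basis of $\vb_{\lGh}^{[l]}$. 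The ideal generated by these functions is the key object to analyze.

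\textbf{Key steps.} First I would note that among the transversal coordinates $t_s$ appearing in $\top_{l,j}$, the ones that vanish identically on $\bd[\pfP,\pfQ]$ are exactly those $t_{\lG_r}$ with $\lG_r$ a two-level graph in the profile $\pfP+\pfQ$; the remaining $t_s$ correspond to two-level graphs $\lG$ with $\pfP+\pfQ\degen (\pfP+\pfQ)\oplus\lG$ obtained by splitting some level, and these are precisely (for level $i$) the elements of $\PFi[(\pfP,i)]{1}$. So on $\bd$ the function $\top_{l,j}$ restricts to a product $\prod_{\lG\in\PFi[(\pfP,i)]{1}} t_{\lG}^{\ell_{\lG}}$ of transversal coordinates to the boundary divisors $\bd[\lG]$, independent of $l$ (this uses that the rescaling exponent at a level passage is $\ell_{\lG}$ and that for complementary profiles $\ell$ is multiplicative, as recalled in the Preliminaries). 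Second, I would factor the ideal: $(\top\cdot\alpha_1,\ldots,\top\cdot\alpha_n) = \bigl(\prod_\lG t_\lG^{\ell_\lG}\bigr)\cdot(\alpha_1,\ldots,\alpha_n)$ is not literally a product of ideals, but its zero locus is the union $V(\alpha_1,\ldots,\alpha_n)\cup\bigcup_\lG V(t_\lG)$, and one checks the scheme structure on each piece. On $V(\alpha_1,\ldots,\alpha_n)$ the $\alpha$'s being part of a coordinate system give exactly the reduced equations \eqref{eq:bdexl} for $\bdexl[\pfP]{i}$. On the component $V(t_\lG)=\bd[\lG]$ for $\lG\in\PFi[(\pfP,i)]{1}$, after restricting to $\bd[\pfP+\lG,\pfQ]$ one sees the remaining equations cut out $\bdexl[\pfP,\lG]{i}$ — here I would invoke the description of the generalized boundary stratum $\bd[\pfP,\lG]$, whose defining ideal carries the exponent $t_\lG^{\ell_\lG}$, matching the exponent appearing in $\top_{l,j}$, so that the non-reduced structure of the zero locus along $\bd[\lG]$ is exactly the non-reduced structure built into $\bdexl[\pfP,\lG]{i}$. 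Third, I would check that the claimed union is scheme-theoretic and not merely set-theoretic by verifying the intersection multiplicities locally, again using that $\alpha$ and $t$ are jointly part of an orbifold coordinate system so everything reduces to a monomial-ideal computation in a polynomial ring.

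\textbf{Main obstacle.} The routine part is the coordinate bookkeeping; the genuinely delicate point is matching the \emph{scheme structure} of $Z(\cl{\ev}_{\pfP,i})$ along the extraneous components $\bd[\lG]$ with the non-reduced structure of $\bdexl[\pfP,\lG]{i}$, i.e.\ confirming that the factor $t_\lG^{\ell_\lG}$ produced by the rescaling in \eqref{eq-rescaled-evaluation} is exactly the $\ell_\lG$-th power appearing in the defining ideal of the generalized boundary stratum. This requires care about horizontal nodes: one must use the observation already made in the excerpt that if a cycle in $\vb_{\pf}^{[i]}$ crosses a horizontal node then the corresponding vanishing cycle also lies in $\vb_{\pf}^{[i]}$, so the horizontal coordinates $q_e$ do not contribute new equations and the local equations are governed purely by the log periods $\alpha$ and the vertical transversal coordinates $t_s$. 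A second subtlety is that $\bd[\pfP]$ may be reducible, so the computation must be done at a point of every component $\bd[\lGh]$; but the rank-independence remark in \Cref{sec-coho-boundary} ensures the equations \eqref{eq:bdexl} have the same shape on each component, so the union decomposition is uniform.
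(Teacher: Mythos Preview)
Your proposal has a genuine gap in the central computation. You assert that the rescaling factor $\top_{l,j}$ is ``independent of $l$'' and equal to $\prod_{s=j-k}^{j} t_s^{\ell_s}$; this reproduces a visible typo in \eqref{eq-rescaled-evaluation}. The correct value $\top_{l,j}=\rescl[l]/\rescl[j]$ is the product of $t_s^{\ell_s}$ only over the level passages lying between $j$ and $l$. In particular $\top_{j,j}=1$, so the top-level log periods $\alpha_{j,m}$ appear in $\cl{\ev}_{\pfP,i}$ with no $t$-factor whatsoever. The ideal of the zero locus is therefore not $(\top\cdot\alpha_1,\ldots,\top\cdot\alpha_n)$ for a single common factor $\top$, but the staircase $(\alpha_j,\; t_{j-1}^{\ell_{j-1}}\alpha_{j-1},\; t_{j-1}^{\ell_{j-1}}t_{j-2}^{\ell_{j-2}}\alpha_{j-2},\;\ldots)$, and the paper's proof proceeds by writing its primary decomposition as the intersection $(\alpha_j,t_{j-1}^{\ell_{j-1}})\cap (\alpha_j,\alpha_{j-1},t_{j-2}^{\ell_{j-2}})\cap\cdots\cap(\alpha_j,\ldots,\alpha_{j-k})$.

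This difference is exactly what makes the proposition true as stated. Your claimed decomposition $V(\alpha)\cup\bigcup_\lG V(t_\lG)$ is wrong: the extraneous component attached to $\lG=\delta_{l-j+1}(\lGh)$ is not the entire thickened divisor $V(t_{j-l}^{\ell_{j-l}})$, but only its sublocus where the periods $\alpha_j,\ldots,\alpha_{j-l+1}$ \emph{above} that level passage vanish as well --- and that sublocus is precisely $\bdexl[\pfP,\lG]{i}$. Your attempted recovery (``after restricting\ldots one sees the remaining equations cut out $\bdexl[\pfP,\lG]{i}$'') is inconsistent with your own setup: if every entry of $\cl{\ev}_{\pfP,i}$ carried the same factor $\top$, then on $V(t_\lG)$ the section would vanish identically and there would be nothing left to cut. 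A secondary gap is your treatment of horizontal nodes, which omits the key point in the paper's argument: a horizontal node cannot lie at the top level $j$ of the range, because its vanishing cycle belongs to $\vb_{\pf}^{[i]}$ (using that $\vb$ is globally defined) and carries a nonzero period there, so $\cl{\ev}_{\pfP,i}$ cannot vanish at such a point.
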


\begin{proof}

Let $(X,\omega)\in\bd[\lGh]^{\circ}$ for some level graph $\lGh$.
On $\lGh$ the $i$-th level of $\pf$ has split into  levels $j,\ldots, j-k$ for some choice of $j,k$.

We first deal with the case that $\lGh$ has no horizontal nodes.
In order to compute the evaluation section in local coordinates we choose a tuple 
  $\alpha= (\alpha_j,\ldots,\alpha_{j-k})$, where $\alpha_l$ is a basis for log periods  for the bundle $\vb_{\lGh}^{[l]}$.
In \cref{eq-rescaled-evaluation} we computed the local equations for $\cl{\ev}_{\pf,i}$, thus the ideal $\calI$ of the zero locus of $\cl{\ev}_{\pf,i}$ evaluation section is given by 
\[
\begin{split}
&\calI = (\top_{j}\alpha_j,\ldots,\top_{j-k}\alpha_{j-k}),
\end{split}
\]
where we set  $\top_{l}:= \dfrac{\rescl[l]}{\rescl[j]} = \prod_{s=j-k}^{j} t_s^{\ell_s}$.
The ideal can be rewritten as an intersection of ideals
\[
\calI = (\alpha_j,t_{j-1}^{\lexp[j-1]})\cap (\alpha_j,\alpha_{j-1}, t_{j-2}^{\lexp[j-2]}) \cap\ldots\cap (\alpha_j,\ldots,\alpha_{j-k-1},t_{j-k}^{\lexp[j-k]})\cap (\alpha_j,\ldots,\alpha_{j-k}).
\]

The last ideal  $(\alpha_j,\ldots,\alpha_{j-k})$ is the ideal of $\bdexl{i}$. The remaining ideals are the defining ideals of $\bdexl[\pf,\lG]{i}$ for some $\lG\in\BIC$, where $\lG$ is  constructed as follows. Fix some ideal
$\calJ=(\alpha_j,\ldots,\alpha_{j-l+1},t_{j-l}^{\lexp[j-l]})$. 
The two-level graph $\lG$ is obtained by only keeping the level passage between the level $j-l+1$ and $j-l$ and contracting all other levels, in other words $\lG=\delta_{l-j+1}(\lGh)$.
By comparing $\calJ$ with the local defining equations for $\bdexl[\pf,\lG]{i}\subseteq \bd[\pf]$ from \cref{eq:bdexl}, we see that the ideals coincide.

We now address the remaining case where $\lGh$ has horizontal nodes.
As before let $I=\{j,\ldots,j-k\}$ be the levels undegenerating to level $i$.
First note that if $\lGh$ has some horizontal node  in a level $l$ not contained in $I$, we can smooth out level $j$ and still stay inside the zero locus of $\calI$.
We also claim that $\lGh$ cannot have a horizontal node $e$ in level $j$. To see this note that the vanishing cycle $\lambda_e$ has a non-zero period and is contained in $\vb_{\pf}^{[i]}$, since we assumed $\vb_{\pf}^{[i]}$ to be a globally defined vector bundle. Thus the evaluation morphism cannot vanish on the $j$-th level.
The remaining case is that $\lGh$ has a horizontal node $e$ contained in a level in $I\setminus \{j\}$. In this case all periods appearing in $\cl{\ev}_i$ involving the horizontal node $e$ are multiplied by some rescaling factor $t_l^{\lexp[k]}$.
Thus we can smooth out the horizontal node while staying inside $Z(\cl{\ev}_{\pf}^{[i]})$. Hence we reduced to the case of non-horizontal level graphs.
\end{proof}

\subsection{Residue subspaces}
We can  now give a quick proof of \Cref{prop:residue-subspaces}.
Consider the residue map
\[
\Res:\calO_{\PSt}(-1)\to \calO_{\PSt}^r,
\]
where $r$ is the number of poles. Let $W\subseteq \calO^r$ be a subbundle.
A residue subspace $Z\subseteq\PMSDS$ is the closure of  the zero locus of the composition $\calO_{\PSt}(-1)\to \calO_{\PSt}^r\to \calO_{\PSt}^r/W$.
Thus a generic point in $Z$ consists of differentials whose residues are contained in $W$.

Dually we can also describe a residue subspace using vanishing of periods. For this let $V=(\calO^r/W)^*$ be such that $\ANN(V)= W$.
Recall the trivial local system $\calH_{\res}$ which is generated by small loops around the marked poles and let $\calV\subseteq\calH_{\res}$ be the sub local system corresponding to $V$. Here any element in $V$ defines a linear equation among residues, which we consider as an element of the dual of cohomology and thus as a relative homology class.
Then $\calV$ is a globally defined local system on the stratum and $Z=\cl{\Ann{\PSt}}$.
We say $Z$ is a {\em residue subspace defined by } $V$.

Let $k=\dim V$ and choose a complete flag
\[
\{0\}=V_0 \subsetneq V_1\subsetneq\ldots\subsetneq V_k=V.
\]
This induces a flag of subbundles $\vb_{\bullet}:\vb_0\subsetneq \vb_1\subsetneq\ldots\subsetneq\vb_k=\vb$.

\begin{proposition}\label{prop:residue-closed} Let $Z$ be a residue subspace defined by a $k$-dimensional linear subspace $V\subseteq \CC^r$.
The class of $Z$ in the Chow ring of $\PMSDS$ is given by
\[
[Z] = (-1)^k\prod_{i=1}^{k}\left( \taut+\sum_{\lG\in W_i(\vb_{\bullet})} \ell_{\lG}\bd[\lG]\right)\in \trd[k]{\PMSDS},
\]
where 
 $W_i(\vb_{\bullet})\subseteq\BIC$ is the collection of all non-horizontal two level graphs $\lG$ such that the restrictions $ (\vb_i)_{\lG}^{[0]} = (\vb_{i-1})_{\lG}^{[0]}$ agree.
 
\end{proposition}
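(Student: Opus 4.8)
The plan is to induct on the length $k$ of the flag $\vb_{\bullet}$. Write $Z_j:=\cl{\Ann[(\vb_j)]{\PSt}}\subseteq\PMSDS$, so that $Z_0=\PMSDS$, $Z=Z_k$, and $Z_k\subseteq\cdots\subseteq Z_0$; the inductive claim is
\[
[Z_j]=(-1)^j\prod_{i=1}^{j}\Bigl(\taut+\sum_{\lG\in W_i(\vb_{\bullet})}\ell_{\lG}\bd[\lG]\Bigr)\in\trd[j]{\PMSDS},
\]
and it would be proved by showing that $Z_j$ is cut out of $Z_{j-1}$ by a single section of a line bundle whose zero locus can be read off from the behaviour of periods near the boundary. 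For $j=0$ both sides equal $1$, which starts the induction; note also that $W_i(\vb_{\bullet})$ depends only on the pair $(\vb_{i-1},\vb_i)$, as is needed for the stated factorisation.

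For the inductive step, first I would fix a cycle $v_j\in V_j$ completing a basis of $V_{j-1}$ to one of $V_j$. On the open stratum $\Ann[(\vb_j)]{\PSt}$ is the zero locus of $\ev_{\calV}(v_j)$ inside $\Ann[(\vb_{j-1})]{\PSt}$, and, the annihilator always having the expected codimension (see the discussion around \Cref{prop:classStratum}), this is a Cartier divisor there. Passing to the Deligne extension of the trivial local system $\calV$ (\Cref{sec:Deligne}), the cycle $v_j$ extends to $\cl{v_j}$ and $\ev_{\calV}(v_j)$ extends to a section $\sigma_j$ of the line bundle $\calL_j:=(\cl{\vb_j}/\cl{\vb_{j-1}})^{*}\otimes\calO(1)$ restricted to $Z_{j-1}$; by \Cref{prop:vanishing} the Deligne extension has vanishing Chern classes, so $c_1(\calL_j)=c_1(\calO(1))=-\taut$. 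By \Cref{prop:zerolocus}, applied with the empty profile and level $0$, the zero locus of $\sigma_j$ on $Z_{j-1}$ is $Z_j$ together with boundary pieces of the form $Z_{j-1}\cap\bdexl[\star,\lG]{0}$ with $\lG\in\BIC$.

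The real work, and the step I expect to be the main obstacle, is to identify these boundary pieces and their multiplicities; this is also exactly where the residue case is easier than \Cref{thm-main-general} in general. Near a point of $\bd[\lG]^{\circ}$ the vanishing of $\sigma_j$ is governed by the log period of $\cl{v_j}$, via \Cref{eq:bdexl} and the rescaling formula \Cref{eq-rescaled-evaluation}. If $(\vb_j)_{\lG}^{[0]}\supsetneq(\vb_{j-1})_{\lG}^{[0]}$, then, after imposing the equations of $\vb_{j-1}$, the specialisation $\spc_{[0]}(v_j)$ is a nonzero top-level cycle, so its log period is part of a local coordinate system; hence $\sigma_j$ does not vanish identically on $Z_{j-1}\cap\bd[\lG]$ and this divisor contributes nothing. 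If instead $(\vb_j)_{\lG}^{[0]}=(\vb_{j-1})_{\lG}^{[0]}$, i.e.\ $\lG\in W_j(\vb_{\bullet})$, then $\spc_{[0]}(v_j)$ already lies in $(\vb_{j-1})_{\lG}^{[0]}$, so modulo the equations of $\vb_{j-1}$ the log period of $\cl{v_j}$ equals the level-$(-1)$ rescaling factor $t_{\lG}^{\ell_{\lG}}$ times a level-$(-1)$ period; thus $\sigma_j$ vanishes to order exactly $\ell_{\lG}$ along $Z_{j-1}\cap\bd[\lG]$, which is the thickening $\bdexl[\star,\lG]{0}$ of class $\ell_{\lG}[\bd[\lG]]$. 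Because $\calV\subseteq\calH_{\res}$ is globally defined and trivial, no extraneous component ever appears in less than expected codimension, so the divisors $\bd[\lG]$ with $\lG\in W_j(\vb_{\bullet})$ are the only codimension-one components of $Z(\sigma_j)$ apart from $Z_j$; this is precisely the feature that fails, and forces blow-ups, in the proof of \Cref{thm-main-general}.

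Granting this, as divisor classes on $Z_{j-1}$ one has
\[
c_1(\calL_j)|_{Z_{j-1}}=[Z_j]+\sum_{\lG\in W_j(\vb_{\bullet})}\ell_{\lG}\,[\bd[\lG]\cap Z_{j-1}],
\]
and since each $\bd[\lG]$ meets $Z_{j-1}$ properly — which I would secure by strengthening the inductive hypothesis to record this transversality, using the regular-embedding statement after \Cref{eq:bdexl} together with \Cref{prop:closedintersection} — this rearranges to $[Z_j]=\bigl(-\taut-\sum_{\lG\in W_j(\vb_{\bullet})}\ell_{\lG}\bd[\lG]\bigr)\cdot[Z_{j-1}]$ in $\CH(\PMSDS)$. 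Feeding in the inductive hypothesis gives the stated product, and membership in $\trd{\PMSDS}$ is automatic since $\taut$ and the $\bd[\lG]$ generate that ring.
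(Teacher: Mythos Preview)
Your approach is essentially the same as the paper's: induct along the flag, and at each step show that $Z_j\subseteq Z_{j-1}$ is the divisor cut out by the evaluation section of the rank-one quotient, after subtracting the boundary divisors $\bd[\lG]$ (with multiplicity $\ell_{\lG}$) on which the residue has dropped to lower level. The paper states this step by pointing to \cite[Prop.~8.3]{CMZ} and \cite[Prop.~7.6]{sauvaget-cohomology} and then recording the multiplicity-$\ell_{\lG}$ computation, whereas you spell it out via the Deligne extension and log periods; one small correction is that \Cref{prop:zerolocus} as written concerns the full evaluation section of $\vb_{\pf}^{[i]}$ on $\bd[\pf]$, not the rank-one quotient restricted to $Z_{j-1}$, so it is your direct log-period analysis in the following paragraph---not that citation---that actually establishes the decomposition of $Z(\sigma_j)$.
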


\begin{proof}
We define a chain of subspaces by letting  $Z_i=\cl{\Ann[\vb_i]{\PSt}}$. In particular we have $\PSt= Z_0, Z=Z_r$. Note that $Z_i\subseteq Z_{i-1}$ is a divisor, since it equals the closure of the annihilator of $\left(\vb_{i}/\vb_{i-1}\right)_{|Z_{i-1}\cap\PSt}$.

The argument  in \cite[Prop. 8.3]{CMZ} (which follows closely \cite[Prop 7.6]{sauvaget-cohomology}) is only stated for linear subspaces $V\subseteq \CC^r$ of a special form induced by global residue conditions but can be modified to show 
\[
[Z_i] = -(\taut+\sum_{\lG\in W_{i-1}} \ell_{\lG}\bd[\lG])\in\CH[1](Z_{i-1}).
\]
To modify the argument we note that the evaluation section $\vb_i/\vb_{i-1}\otimes\calO(-1)\to\calO$ vanishes on $Z_i$ with multiplicity one, which can be seen in period coordinates. Furthermore it vanishes identically on a boundary divisor $\bd[\lG] \cap Z_{i-1}$ if and only if all the residues in $V_i$ are on the lower level of $\bd[\lG]$, at least modulo $ (\vb_{i-1})_{\lG}^{[0]}$. On the lower level the universal family of multi-scale differentials is given by the differential $t^{\ell_{\lG}}\omega$ and thus the residue of a marked point in the bottom level vanishes with multiplicity $\ell_{\lG}$. The proof now follows by recursively using push-pull.
\end{proof}

\section{The arrangement of exact differentials}
For the rest of this section we fix a globally defined local system $\calV$ on a generalized stratum $B$ as well as a profile $\pf$ and a level $i$. Our goal is  to compute the class of $\bdexl[\pf]{i}$.

We order all non-horizontal  two-level graphs $\lG_1,\ldots,\lG_n\in \PF[{}^1(\pf,i)]$ in a way that respects the partial ordering $\succ$ on $\BIC$. Recall the definition of $\PF[{}^1(\pf,i)]$ from \Cref{prop:zerolocus}. We then blow $B$ iteratively up  in $\bdexl[\pf,\lG_1]{i},\ldots, \bdexl[\pf,\lG_n]{i}$.
On the final blowup $\ptf{Z}$ there will be a vector bundle $\calE$ and a regular section whose zero locus is the proper transform $\ptf{\bdexl[\pf]{i}}$.
By pushing forward along $\tilde{Z}\to B$ we then obtain a formula for the class and the Chern class of the normal bundle of $\bdexl[\pf]{i}$ in terms of the classes and normal bundles of the blowup centers $\bdexl[\pf,\lG_j]{i}$. We then repeat the procedure for  $\bdexl[\pf,\lG_j]{i}$ and the globally defined vector bundle $\vb_{\pf+\lG_j}^{[i]}$.
At each step the codimension of the underlying boundary stratum $\bd[\pf]$ increases and therefore the procedure has to terminate at some point, allowing us to compute the class $\bdexl[\pf]{i}$ recursively.

For the remainder of the section we will freely make use of the definitions and statements from the appendix.

\subsection{The blowup construction}
We start with the explicit construction of $\ptf{Z}$. We now write $Z=B$.
For brevity set $W_k := \bdexl[\pf,\lG_k]{i}$ and $A:= \bdexl[\pf]{i}$, $\ev=\cl{\ev}_{\pf}^{[i]}$.
We then define the first blowup $\ptf{Z}_1:=\Bl_{W_1} Z$. Afterwards we let $\ptf{W}^{(1)}_i$ be the proper transform of $W_i$ for $i>1$ and $\ptf{W}_1^{(1)}$ and the exceptional divisor. Define the second blowup $\ptf{Z}_2:= \Bl_{\ptf{W}_2^{(1)}} \ptf{Z}_1$.
 We then proceed inductively. If we have already constructed $\ptf{Z}_i$, we  let  $\ptf{W}_{j}^{(i)}$ be the iterated proper transform of $W_j$ for $j>i$ and $\ptf{j}^{(i)}, j\leq i$ the pullback of the exceptional divisor. 
 We then define the $(i+1)$-th blowup by
\[ \ptf{Z}_{i+1}:= \Bl_{\ptf{W}_{i+1}^{(i)}} \ptf{Z}_i.\] We write $\ptf{Z}:=\ptf{Z}_n$ for the final blowup.

Recall that on $Z$ we have the vector bundle $\calE:=\left(\vb_{\pf}^{[i]}\right)^*\otimes \calO_Z^{[i]}(1)$ and the evaluation morphism $\ev:\calE^* \to\calO_Z$, which we consider a global section of $\calE$.
We can pullback $\calE$ and $\ev$ to all partial blowups $\ptf{Z}_i$ and usually omit the pullbacks in our notation. Since $\ev$ vanishes on $A_{\pf,\lG_1}$, the pullback section vanishes with some multiplicity $m(\lG_1)$ on the exceptional divisor $\ptf{W}_1^{(1)}$. Dividing $\ev$ by the defining equation for $\ptf{W}_1^{(1)}$, we obtain a section $ 
\ptf{\ev}_1$ of the vector bundle $ \calE \otimes \calO(\ptf{W}_1^{(1)})^{\otimes -m(\lG_1)}$.

We now claim that 
\[
Z(\ptf{\ev}_1) = \ptf{A}^{(1)} \cup_{j\geq 2} \ptf{W}_j^{(1)}.
\]
In particular we removed one of the exceptional components.

This will be proven as part of \Cref{prop:finalblowup}.
From here we proceed inductively.
We now blow $\ptf{W}_2^{(1)}$ up, divide the pullback of $\ptf{\ev}_1$ by the defining equation for the exceptional divisor and obtain a section $\ptf{\ev}_2$ of
$\calE \otimes  \calO(\ptf{W}_1^{(2)})^{\otimes -m(\lG_1)}
\otimes  \calO(\ptf{W}_2^{(2)})^{\otimes -m(\lG_2)}$.
The vanishing locus of $\ptf{\ev}_2$ will then be 
\[
Z(\ptf{\ev}_2) = \ptf{A}^{(2)} \cup_{j\geq 3} \ptf{W}_{j}^{(2)}.
\]
After the final blowup we will thus have constructed a section $\ptf{\ev} = \ptf{\ev}_n$ of 
\[
\calE \otimes  \bigotimes_{i=1}^n \calO(\ptf{W}_i)^{\otimes -m(\lG_i)}
\]
whose zero locus is 
\[
Z(\ptf{\ev}) = \ptf{A}
\]
the proper transform of $A= \bdexl[\pf]{i}$.
Since $\codim_{\ptf{Z}} \ptf{A} = \rank \calE$ has the expected codimension, the section $\ptf{\ev}$ is regular and thus we can compute both the class $[\ptf{A}]$ and the Chern classes of the normal bundle of $\ptf{A}$.
All this information is summarized in the \CHP $\chp{\ptf{A}}{\ptf{Z}}$ (see \Cref{sec:chp}).

\begin{proposition}
\label{prop:finalblowup}
On the final blowup $\ptf{Z}$ we have 
\[
\chp{\ptf{A}}{\ptf{Z}} = c( \calO^r  \otimes\calO(1)\otimes \bigotimes_{i=1}^n \calO(-\ptf{W}_i)),
\]
where $r$ is the rank of $\calE$.

\end{proposition}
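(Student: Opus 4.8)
The plan is to prove \Cref{prop:finalblowup} together with the successive zero-locus identities announced before it, by induction on the number of blow-ups $a=0,1,\dots,n$, and then to read off the \CHP from the observation that on $\ptf{Z}$ the section $\ptf{\ev}$ is a \emph{regular} section of an explicit line-bundle twist of $\calE$, whose total Chern class is forced by the vanishing Chern classes of $\vb$.

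First I would establish, by induction on $a$, the identity
\[
Z(\ptf{\ev}_a)\;=\;\ptf{A}^{(a)}\ \cup\ \bigcup_{b>a}\ptf{W}_b^{(a)},
\]
the case $a=0$ being precisely \Cref{prop:zerolocus}. For the inductive step I would work in the log-period coordinates of \Cref{sec:Deligne} near a point of $\bd[\lGh]^{\circ}$, for a level graph $\lGh$ degenerating $\pf$, exactly as in the proof of \Cref{prop:zerolocus}. There the ideal of $Z(\ev)$ factors as $\calI=\calJ_1\cap\dots\cap\calJ_n\cap\calI_A$, where $\calI_A=(\alpha_j,\dots,\alpha_{j-k})$ is the ideal of $A=\bdexl[\pf]{i}$ and each $\calJ_b=(\alpha_j,\dots,\alpha_{j-l_b+1},\,t_{j-l_b}^{\ell_{j-l_b}})$ is the ideal of $W_b$; in particular each $W_b$ is a regularly embedded complete intersection, cut out by a subset of the coordinate functions $\alpha_\bullet$ together with a single power $t_\bullet^{\ell_\bullet}$. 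Blowing up $\ptf{W}_a^{(a-1)}$ turns $\calJ_a$ into the invertible ideal $\calO(-\ptf{W}_a^{(a)})$, and --- because all the loci involved are coordinate subschemes and the $\lG_a$ are ordered compatibly with $\succ$, so that the earlier centres do not perturb the local shape of the later ones --- the total transform of the ideal of $Z(\ptf{\ev}_{a-1})$ factors as $\calO(-\ptf{W}_a^{(a)})$ times the ideal of $\ptf{A}^{(a)}\cup\bigcup_{b>a}\ptf{W}_b^{(a)}$. Dividing $\pi_a^*\ptf{\ev}_{a-1}$ by the defining section of $\ptf{W}_a^{(a)}$ therefore removes exactly the component $\ptf{W}_a^{(a)}$, which is the inductive claim. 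In the same local picture I would also check that the multiplicity $m(\lG_a)$ of the construction equals $1$: the top-level log periods $\alpha_j$ occur among the components of $\ev$ (with coefficient $\top_j=1$) and are minimal generators of the ideal of $\ptf{W}_a^{(a-1)}$, so $\pi_a^*\ptf{\ev}_{a-1}$ vanishes on the exceptional divisor to order exactly one. Thus $\ptf{\ev}$ is a section of $\calF:=\calE\otimes\bigotimes_{a=1}^n\calO(-\ptf{W}_a)$.

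After the last step $Z(\ptf{\ev})=\ptf{A}$, the proper transform of $A=\bdexl[\pf]{i}$. Since $\bdexl[\pf]{i}$ is regularly embedded of codimension $r=\rank\calE$ in $B$ and the intervening blow-ups do not change this, $\codim_{\ptf{Z}}\ptf{A}=r=\rank\calF$, so $\ptf{\ev}$ is a regular section of $\calF$; hence $\ptf{A}\hookrightarrow\ptf{Z}$ is a regular embedding with normal bundle $\calF|_{\ptf{A}}$, and by the definition of the \CHP in \Cref{sec:chp} we obtain $\chp{\ptf{A}}{\ptf{Z}}=c(\calF)$. Here one should note that $\ptf{Z}$ may be singular, since the centres $W_a$ are non-reduced along their supports; but these are the mild, toric-type singularities treated in the appendix, for which the operational Chow ring still carries the intersection product needed for $\chp{\ptf{A}}{\ptf{Z}}$. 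To finish, write $\calF=(\vb_{\pf}^{[i]})^{*}\otimes M$ with $M:=\calO(1)\otimes\bigotimes_{a=1}^n\calO(-\ptf{W}_a)$ a line bundle. By \Cref{prop:vanishing} the Deligne extension $\cl{\vb_{\pf}^{[i]}}$, hence its dual, has vanishing Chern classes, and the same holds after pull-back to $\ptf{Z}$; and for any rank-$r$ bundle $V$ with $c(V)=1$ and any line bundle $M$ the splitting principle gives $c(V\otimes M)=(1+c_1(M))^{r}=c(\calO^{r}\otimes M)$. Applying this to $V=(\vb_{\pf}^{[i]})^{*}$ yields
\[
\chp{\ptf{A}}{\ptf{Z}}=c(\calF)=c\!\left(\calO^{r}\otimes\calO(1)\otimes\bigotimes_{a=1}^n\calO(-\ptf{W}_a)\right),
\]
as asserted.

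The main obstacle is the inductive bookkeeping of the first step: showing that ``blow up $\ptf{W}_a^{(a-1)}$, then divide by the exceptional equation'' strips off exactly the one component $\ptf{W}_a^{(a)}$ and leaves $\ptf{A}$ and the remaining $\ptf{W}_b$ untouched. This requires controlling the local ideal $\calI$ and all of its successive proper transforms simultaneously --- this is where the $\succ$-compatible ordering of the $\lG_a$ really enters --- and it requires verifying that the blow-ups of the non-reduced complete-intersection centres are tame enough that normal bundles, push-forwards and the \CHP formalism of \Cref{sec:chp} all remain valid on the (singular) $\ptf{Z}$. Once that is in place, the regularity of $\ptf{\ev}$, the identification $m(\lG_a)=1$, and the Chern-class computation are all essentially formal.
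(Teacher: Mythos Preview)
Your proposal follows essentially the same route as the paper: induction on the number of blow-ups using the explicit log-period ideals from \Cref{prop:zerolocus}, verification that each $m(\lG_a)=1$, and then the Chern-class identification via \Cref{prop:vanishing} and the splitting principle.

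One step is incomplete. Your argument for $m(\lG_a)=1$ relies on the top-level log periods $\alpha_j$ appearing with coefficient $\top_j=1$ among the components of $\ev$ and among the generators of $\calI(W_a)$. This presupposes that the tuple $\alpha_j$ is nonempty, i.e., that $\vb_{\pf+\lG_a}^{[i]}\neq 0$. The paper treats the complementary case separately: if every cycle of $\vb$ specializes to the lower level of $\lG_a$, then near a generic point of $W_a$ the evaluation section takes the form $t^{\ell_a}\beta$ with $\beta$ generically nonvanishing, and the ideal of $W_a$ is just $(t^{\ell_a})$. Here the multiplicity along the \emph{reduced} divisor $\bd[\lG_a]$ would be $\ell_a$, and it is precisely the choice to blow up the non-reduced centre $W_a=\bdexl[\pf,\lG_a]{i}$---where $t^{\ell_a}$ is a uniformizer for the exceptional divisor---that forces the vanishing order to be $1$. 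You note the non-reducedness later, in connection with the singularities of $\ptf{Z}$, but you do not invoke it where it is actually needed, namely to justify the twist $\calO(-\ptf{W}_a)$ rather than $\calO(-\ptf{W}_a)^{\otimes\ell_a}$ in the second case. Without this case distinction the identification of $\calF$ is not fully established.
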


\begin{proof}
First note that $\vb_{\pf}^{[i]}$ has vanishing Chern classes by \Cref{prop:vanishing} so for the purpose of Chern class computation we replace it by the trivial bundle.
We now address the multiplicity $m(\lG_k)$.
It can be computed as the lowest degree term of $\ptf{\ev}^{(k-1)}$ along a generic point of $\ptf{W}_k^{(k-1)}$. Since $\ptf{W}_k^{(k-1)}$ and $W_k$ are birational it suffices to compute the multiplicity of $\ev$ along $W_k$.
A generic point of $W_k$ has profile $\pf+\lG_k$.
There are now two possibilities. Either there exists some period of $\vb$ in level $i$, in which case the multiplicity along that period is $1$. Therefore the evaluation section vanishes with multiplicity $1$ in this case.
In the remaining case all periods of $\vb$ are in lower level, thus the evaluation section can be written as $t_i^{\ell_k}\alpha$, where $\alpha$ denotes the log periods of $\vb$ in the lower level and $\ell_k= \ell(\lG_k)$. Generically some lower level period is non-zero. Thus the multiplicity is $1$, since
 we are working with the non-reduced space $\bdexl[\pf,\lG_k]{i}$, where  in the blowup $t_i^{\ell(\lG_i)}$ is a uniformizer.

It remains to show the claim that at each step
\[
Z(\ptf{\ev}_k) = \ptf{A}^{(k)} \cup \bigcup_{j=k+1}^n \ptf{A}_{\pfP,\lG_j}^{(k)}.
\]
We argue by induction. The case $k=0$ follows from \Cref{prop:zerolocus}.
For the induction step we assume the statement has been proven for some $k$. We claim that on $\ptf{Z}_k$  local equations for $\ptf{W}_{k+1}^{(k)}=\ptf{A}_{\pfP,\lG_{k+1}}^{(k)}$ are given by a regular sequence 
\[
(\alpha,T_k)
\]
where $\alpha$ is a tuple corresponding to the periods specializing to level $i$ and $T_k$ corresponds to the transversal coordinate $t_{i+1}^{\ell(\lG_{k+1})}$.
This can be seen by induction, using the observation that local equations for the proper transform are obtained by replacing some terms in the regular sequences by corresponding projective coordinates; see \Cref{sec:equationproper} for a description in local coordinates.
Similarly the evaluation section can now be written in the form
\[
\ptf{\ev}_k=(\alpha,T_k\cdot \beta),
\]
where $\beta$ is a tuple such that $(\alpha,\beta,T_k)$ is a regular sequence.

 Thus on the blowup, after dividing by the exceptional divisor, the zero locus of $\ptf{\ev}_{k+1}$ is given by $(\ptf{\alpha},\beta)=0$, where $\ptf{\alpha}$ denotes the corresponding projective coordinates. In particular there is no component of the zero substack of the evaluation section that is supported entirely on the new exceptional divisor $\ptf{W}_{k+1}^{(k+1)}$ and the remaining components are of the form $\ptf{A}_{\pfP,\lG_j}^{(k)}$.
In order to reduce the amount of bookkeeping we discuss the details only in  the following case. Suppose $(X,\omega)\in W_2$ with underlying   $3$-level graph $\lGh$. Additionally suppose the profile of $\lGh$ is $\lG_1,\lG_2$ and we have $W_i = \bdexl[\pf,\lG_1]{i}$.  Let  $\alpha,\beta,\gamma$ be a basis of the restriction $\vb^{[i]}_{\lGh}$ for levels $0, -1$ and $-2$ respectively and let $t_1$ and $t_2$ be the level parameters for level $-1$ and $-2$, respectively. For simplicity we assume $i=0$ and $\ell(\lG_1)=\ell(\lG_2)=2$.
The ideal sheaves for $W_1$ and $W_2$ are locally generated by $\calI(W_1)= (\alpha,t_1), \, \calI(W_2)= (\alpha,\beta,t_2)$, while the evaluation section is given by
\[
\ev = (\alpha,t_1\beta,t_1t_2\gamma).
\]

On the exceptional divisor we have to introduce projective coordinates $\alpha'$ and $t_1'$.
At a point supported on the exceptional divisor of the blowup $\ptf{Z}_1$ the equation for the proper transform of $W_2$ are given by a regular sequence 
$(\alpha',\beta,t_2)$ where $\alpha'$ denotes the projective coordinates corresponding to $\alpha'$.
On the chart where $\alpha'_j\neq 0$, the evaluation section $\ptf{\ev}_{(1)}$ is given by
\[
\ptf{\ev}^{(1)} = (\hat{\alpha},t_1'\beta,t_1't_2\gamma),
\]
where $\hat{\alpha} = (\alpha'_1,\ldots,\alpha'_{j-1},1,\alpha'_{j+1},\ldots,\alpha'_d)$. Thus $\ptf{\ev}_{(1)}$ is non-vanishing.
On the chart where $t_1'\neq 0$, we have
\[
\ptf{\ev}_{(1)} = (\alpha',\beta,t_2\gamma).
\]
Thus the zero locus is the union of $Z(\alpha', \beta, t_2) = \ptf{W}_2^{(1)}$ and $Z(\alpha', \beta, \gamma) = \ptf{A}^{(1)}$.

\end{proof}

We now have all the ingredients to finish the proof of the main theorem.
\begin{proof}[Proof of \Cref{thm:main}]
In our new language we are trying to compute the \CHP of $\bdexl[\star]{0}$, where $\star$ denotes the unique level graph with a single vertex.

We will show more generally that the \CHP for $\bdexl[\pf]{I}$ in $\bd$ can be computed explicitly and lies in the divisorial tautological ring $\divR(\bd)$.
As we have already remarked the intersection
\[
\bdexl[\pf]{I} = \cap_{i\in I} \bdexl[\pf]{i}\subseteq\bd,
\]
is algebraically transverse. Thus it suffices to compute the \CHP of $\bdexl[\pf]{i}$ for all profiles and all levels $i$.

\begin{claim}
Suppose that we have already shown $\bdexl[\pfP+\pfQ]{j}\in\divR(\bd[\pf+\pfQ])$ for any $\pfQ\in\PF[(\pf,i)]$ and  any level $j$ that undegenerates to the $i$-th level.
Then $\bdexl[\pf]{i}\in\divR(\bd[\pf])$ and can be  computed explicitly.
\end{claim}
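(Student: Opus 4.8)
The plan is to run the blowup construction of the previous subsection with $Z=\bd[\pf]$, $A=\bdexl[\pf]{i}$ and $\ev=\cl{\ev}_{\pf}^{[i]}$, taking as centers $W_k=\bdexl[\pf,\lG_k]{i}$, where $\lG_1,\dots,\lG_n$ enumerate $\PF[{}^1(\pf,i)]$ in an order refining $\succ$. By \Cref{prop:zerolocus} the zero locus of $\ev$ on $\bd[\pf]$ is $A\cup\bigcup_k W_k$, and by \Cref{prop:finalblowup} on the final blowup $\ptf Z=\ptf Z_n$ the twisted section $\ptf\ev$ is regular with zero locus the proper transform $\ptf A$ of $A$, and its \CHP $\chp{\ptf A}{\ptf Z}$ is given by an explicit formula. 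So on $\ptf Z$ both $[\ptf A]$ and the Chern classes of its normal bundle are known; what the claim demands is to transport this information down the tower $\ptf Z=\ptf Z_n\to\cdots\to\ptf Z_0=\bd[\pf]$ and to recognize the output as lying in $\divR(\bd[\pf])$ and as explicitly computable.

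The descent I would carry out one blowup at a time. At the $k$-th step the map $\pi_k\colon\ptf Z_k\to\ptf Z_{k-1}$ is the blowup along the regularly embedded center $\ptf W_k^{(k-1)}$, whose ideal has the local-coordinate form recorded in \Cref{sec:equationproper}, and I would invoke the push-forward formulas for total Chern classes under blowups developed in the appendix: these express $\chp{\ptf A^{(k-1)}}{\ptf Z_{k-1}}$, together with $\pi_{k,*}[\ptf A^{(k)}]$, in terms of $\chp{\ptf A^{(k)}}{\ptf Z_k}$ and the \CHP of the center $\ptf W_k^{(k-1)}$ in $\ptf Z_{k-1}$, all the correction terms being push-forwards from the center (Segre-class contributions of its normal bundle). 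The \CHP of $\ptf W_k^{(k-1)}$ is itself obtained from the \CHP of $W_k$ in $\bd[\pf]$ by running the same blowup formulas for the earlier centers $\lG_1,\dots,\lG_{k-1}$; the choice of a $\succ$-compatible ordering is exactly what guarantees that the defining equations stay regular sequences through all of these proper transforms, so this bookkeeping is finite and explicit.

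It then remains to feed in the \CHP of each center $W_k=\bdexl[\pf,\lG_k]{i}$ in $\bd[\pf]$, and this is where the induction hypothesis enters. Using the multiplicity relations from \Cref{sec-generalized-boundary},
\begin{align*}
[\bdexl[\pf,\lG_k]{i}] &=\ell(\lG_k)\,[\bdexl[\pf+\lG_k]{i}],\\
c(\nb{\bdexl[\pf,\lG_k]{i}}{\bd[\pf,\lG_k]})\cap[\bdexl[\pf,\lG_k]{i}] &=\ell(\lG_k)\,c(\nb{\bdexl[\pf+\lG_k]{i}}{\bd[\pf+\lG_k]})\cap[\bdexl[\pf+\lG_k]{i}],
\end{align*}
the \CHP of $W_k$ in $\bd[\pf]$ equals $\ell(\lG_k)$ times the push-forward along $j_k\colon\bd[\pf+\lG_k]\hookrightarrow\bd[\pf]$ of the \CHP of $\bdexl[\pf+\lG_k]{i}$ in $\bd[\pf+\lG_k]$; if the split level $i$ produces more than one level of $\pf+\lG_k$ I would first combine the corresponding factors using the algebraic transversality of \Cref{prop:closedintersection}. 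Since $\lG_k\in\PF[(\pf,i)]$ and the levels occurring are exactly those undegenerating to the $i$-th level, the induction hypothesis gives that this \CHP lies in $\divR(\bd[\pf+\lG_k])$ and is explicitly computed. Finally, $\bd[\pf+\lG_k]=\bd[\pf]\cap\bd[\lG_k]$ is the restriction of a tautological boundary divisor to $\bd[\pf]$, and $\divR(\bd[\pf+\lG_k])$ is by definition the pull-back of $\divR(\bd[\pf])$, so the projection formula shows $j_{k,*}$ sends divisorial tautological classes to divisorial tautological classes; combined with the vanishing of the Chern classes of the Deligne extensions (\Cref{prop:vanishing}), which makes $\chp{\ptf A}{\ptf Z}$ a polynomial in $\taut$, the $\bd[\lG]$ and the exceptional divisors, every term produced in the descent lies in $\divR(\bd[\pf])$. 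Termination is by codimension: each $\bd[\pf+\lG_k]$ carries one more level passage than $\bd[\pf]$, so after finitely many recursive steps $\PF[{}^1(\pf,i)]$ is empty, $\ev$ is already a regular section on $\bd[\pf]$, and $[\bdexl[\pf]{i}]$ is just the top Chern class of $\calE$, which provides the base case.

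The step I expect to be the main obstacle is exactly the one the appendix is designed to handle: when $\lG_k$ is non-trivial the center $W_k=\bdexl[\pf,\lG_k]{i}$ is non-reduced, of geometric multiplicity $\ell(\lG_k)$, so the intermediate blowups $\ptf Z_k$ need not be smooth, and one has to check that the local defining equations are mild enough — regular sequences of the shape $(\alpha,T_k\beta)$ as in the proof of \Cref{prop:finalblowup} — that the Chow groups still carry an intersection product, that the proper transforms keep the expected codimension, and that $\ptf\ev_k$ vanishes with precisely the multiplicities $m(\lG_k)$ claimed there. Keeping this local analysis coherent across the whole tower, in particular making sure that blowing up a $\succ$-smaller center never destroys the regular-sequence form of the equations of a $\succ$-larger one, is the delicate point; everything else reduces to push-pull together with the Chern-class vanishing already established.
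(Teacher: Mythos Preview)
Your approach is essentially the paper's, but the paper packages the descent through the iterated blowups into the appendix machinery of \emph{systems of regular embeddings} and \Cref{prop:arrangement}, rather than descending one step at a time by hand. Concretely, the paper takes
\[
\calS = \{\bd\}\cup\{ \bdexl[\pf, \pfQ]{I}\,:\, \pfQ \in \PF[(\pf,i)],\ L(\pfQ)\geq 2\},\qquad B=\{\bdexl[\pf,\lG]{i}:\lG\in\PF[{}^1(\pf,i)]\},
\]
verifies once and for all that $\calS$ is closed under intersection (via \Cref{prop:closedintersection}), that every element is an Alexander stack, and that for \emph{every} containment $\bdexl[\pf,\pfQ]{I}\subseteq\bdexl[\pf,\pfS]{K}$ in $\calS\cup\{A\}$ the \CHP is defined over $\divR$. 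Then \Cref{prop:arrangement} gives the conclusion in one stroke.

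The place where your write-up is thinner than the paper is exactly this last verification. To compute the \CHP of the iterated proper transform $\ptf W_k^{(k-1)}$ via \Cref{cor:3outof4}, you need not only $\chp{W_k}{\bd[\pf]}$ but also $\chp{W_j\cap W_k}{W_j}$ and $\chp{W_j\cap W_k}{W_k}$ for all earlier $j<k$, and more generally the \CHPs of all mutual intersections inside one another. By \Cref{prop:closedintersection} these intersections are again of the form $\bdexl[\pf,\pfQ]{I}$ with $\pfQ$ a longer profile, so the induction hypothesis does apply; but you have to actually say this, and you also have to check that the relevant \CHP inside a \emph{member of $\calS$} (not just inside $\bd[\pf]$) is defined over $\divR$. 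The paper handles this via \Cref{prop:chern} and the transversality argument in \cref{eq:normal1,eq:normal2}: one factors each inclusion through a transversal intersection with a boundary stratum $\bd[\pf,\pfR]$, which reduces the relative \CHP to a product of \CHPs already covered by the hypothesis. Your sentence ``running the same blowup formulas for the earlier centers'' presupposes this input without supplying it.

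A small phrasing issue: a \CHP is a class in the ambient Chow ring, not a push-forward from the center, so ``$\ell(\lG_k)$ times the push-forward of the \CHP'' is not literally correct. What you want is that $\chp{\bdexl[\pf,\lG_k]{i}}{\bd[\pf,\lG_k]}$ is obtained by pulling back $\chp{\bdexl[\pf+\lG_k]{i}}{\bd[\pf+\lG_k]}$ (same Chern classes, top term scaled by $\ell(\lG_k)$), and then $\chp{\bdexl[\pf,\lG_k]{i}}{\bd[\pf]}$ follows by \Cref{prop:chern}(1) applied to the chain $\bdexl[\pf,\lG_k]{i}\subseteq\bd[\pf,\lG_k]\subseteq\bd[\pf]$, the second factor being $\ell(\lG_k)\chp{\bd[\pf+\lG_k]}{\bd[\pf]}\in\divR$.
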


Before we prove the  claim we explain how it finishes the proof of the theorem. For a profile $\pf$ with no degenerations that split level $i$, it follows from \Cref{prop:zerolocus}
that
\[
Z(\cl{\ev}_{\pf,i}) = \bdexl[\pf]{i}
\]
and thus $\cl{\ev}_{\pf,i}$ is a regular section. In particular the \CHP of $\bdexl[\pf]{i}$ is given by the \CHP of $(\vb_{\pf}^{[i]})^*\otimes\calO_{\bd}^{[i]}(1)$ and thus $\chp{\bdexl[\pf]{i}}{\bd[\pf]} = c(\calO_{\bd}^r\otimes \calO_{\bd}^{[i]}(1))\in\divR(B)$,
where $r=\rank \vb_{\pf}^{[i]}$.
The general case follows now from the claim by induction on the maximal length of a profile in $\PF[(\pf,i)]$. Since the length of profiles is bounded the process terminates.

We now prove the claim.
Our goal is to apply \Cref{prop:arrangement} with the system of regular embeddings given by
\[
\calS = \{\bd\}\cup\{ \bdexl[\pf, \pfQ]{I}\,:\, \pfQ \in \PF[(\pf,i)], L(\pfQ)\geq 2\text{ and any level in $I$ undegenerates to level $i$}\},
\]
the building set $B=  \{ \bdexl[\pf, \lG]{i}\,:\, \lG \in \PF[{}^1(\pf,i)]\}$
and $A= \bdexl[\pf]{i}, \Rtr = \divR$. To see that $\calS$ is indeed a system of regular embeddings, we need to check that $\calS$ is closed under finite intersections and if $\bdexl[\pf,\pfQ]{I}\subseteq\bdexl[\pf,\pfR]{J}$ is a containment, then the inclusion is a regular embedding. Closedness under finite intersection follows from \Cref{prop:closedintersection}.
By inspecting the local equations for  $\bdexl[\pf,\pfQ]{I}$ and $\bdexl[\pf,\pfR]{J}$ inside $B$, we see that $\bdexl[\pf,\pfQ]{I}$ is defined by the additional vanishing of log periods in the levels in $I$ that do not undegenerate to $J$ and some additional level scaling parameters $t_i^{\ell_i}$, which are part of a regular sequence and thus showing that inclusion is a regular embedding. We also need to verify that all $\bdexl[\pf,\pfQ]{I}$ are Alexander stacks. By \Cref{prop:Alexander}, 2) it suffices to show the claim for the reduced substack $\bdexl[\pf+\pfQ]{I}$. But $\bdexl[\pf+\pfQ]{I}$ is smooth, since the local equations are part of a system of parameters for $B$ and any smooth \DM stack is Alexander, see \Cref{prop:Alexander}, 3).

We now address the \CHPs. First, we show that for any $\bdexl[\pf+ \pfQ]{I}\in\calS$ the \CHP $\chp{\bdexl[\pf+ \pfQ]{I}}{\bd[\pf]}$ is defined over $\divR$.
By the  assumption of the claim we know $\chp{\bdexl[\pf+ \pfQ]{I}}{\bd[\pf+\pfQ]} = \prod_{j\in I} \chp{\bdexl[\pf+ \pfQ]{j}}{\bd[\pf+\pfQ]} $ is defined over $\divR$ for all $\bdexl[\pf+\pfQ]{I}\in\calS$ and hence
\[
\chp{\bdexl[\pf, \pfQ]{I}}{\bd[\pf,\pfQ]} = \cdot \chp{\bdexl[\pf+ \pfQ]{I}}{\bd[\pf+\pfQ]} \in \trd{\bd[\pf+\pfQ]}.
\]

On the other hand $\chp{\bd[\pf+\pfQ]}{\bd}\in\divR(\bd[\pf])$. We then apply \Cref{prop:chern} to the inclusions $\bdexl[\pf,\pfQ]{I}\subseteq\bd[\pf,\pfQ]\subseteq\bd$.

It only remains to show that for any inclusion $\bdexl[\pf, \pfQ]{I}\subseteq \bdexl[\pf, \pfS]{K}$ with  $\bdexl[\pf, \pfQ]{I}\in\calS, \bdexl[\pf, \pfS]{K}\in\calS\cup\{\bdexl[\pf]{i}\}$ the \CHP $\chp{\bdexl[\pf, \pfQ]{I}}{\bdexl[\pf, \pfS]{K}}$ is defined over $\divR$.

We can write $\pfQ= \pfR + \pfS$ for some profile $\pfR$ and then
\[ \bdexl[\pf,\pfS]{K} \cap \bd[\pf,\pfQ]= \bdexl[\pf,\pfS]{K} \cap \bd[\pf,\pfR]= 
 \bdexl[\pf,\pfQ]{J} 
\] for some set $J\subseteq I$.
Since $\pfR$ and $\pfS$ are disjoint, the intersection $ \bdexl[\pf,\pfQ]{J} =\bdexl[\pf,\pfS]{K} \cap \bd[\pf,\pfR]$ is algebraically transverse inside $\bd[\pf]$. In particular we see
\begin{equation}
\label{eq:normal2}\chp{\bdexl[\pf,\pfQ]{J}}{\bdexl[\pf,\pfS]{K}} = \chp{\bd[\pf,\pfR]}{\bd[\pf]} = \ell(\pfR) \chp{\bd[\pf+\pfR]}{\bd}\in \Rtr(\bdexl[\pf,\pfS]{K}).
\end{equation}

Additionally,
\[
\bdexl[\pf,\pfQ]{I} = \bdexl[\pf,\pfQ]{J} \cap \bigcap_{j \in I \setminus J}\bdexl[\pf,\pfQ]{j}\subseteq \bd[\pf,\pfQ] 
\]
is an algebraically transverse intersection and thus
\begin{equation}
\label{eq:normal1}
\chp{\bdexl[\pf,\pfQ]{I} }{\bdexl[\pf,\pfQ]{J} } =\prod_{i\in I\setminus J} \chp{\bdexl[\pf,\pfQ]{i}}{\bd[\pf,\pfQ]} = \prod_{i\in I\setminus J} \chp{\bdexl[\pf+\pfQ]{i}}{\bd[\pf+\pfQ]}  \in \Rtr(\bdexl[\pf,\pfQ]{J}),
\end{equation}
omitting evident pullbacks.

The situation is summarized in the following diagram of Cartesian squares.
\[
\begin{tikzcd}
\bdexl[\pf,\pfQ]{I}\ar[r]\ar[d]& \bdexl[\pf,\pfQ]{J} \ar[r]\ar[d] & \bdexl[\pf,\pfS]{K}\ar[d]\\
 \cap_{j\in I/J} \bdexl[\pf,\pfQ]{j} \ar[r]&\bd[\pf,\pfQ] \ar[r] \ar[d]& \bd[\pf,\pfS]\ar[d]\\
& \bd[\pf,\pfR] \ar[r]& \bd[\pf]
\end{tikzcd}
\]

By  \cref{eq:normal1,eq:normal2}, it now follows
from \Cref{prop:chern} (2) that
$\chp{\bdexl[\pf, \pfQ]{I}}{\bdexl[\pf, \pfS]{K}}\in \trd{\bdexl[\pf, \pfS]{K}}$.

This finishes the proof of the induction step.
\end{proof}

\section{Low genus formulas and crosschecks}\label{section-crosschecks}
The iterative procedure for  the computation of the class of $[\cl{\Exc}]$ has been implemented in \verb|SAGE|, building on the packages \verb|admcycles| and 
\verb|diffstrata|. The code is available on the authors website.
For the algorithm we need to list all two-level graphs as well as the partial ordering between them.
The number of  level graphs grows exponentially and thus slows down the computation.
In this section we explain a few cases where we can avoid the complicated recursion using blowups and obtain more explicit formulas.

\subsection*{Imposing ramification}

If one has already computed the class $[\cl{\Excmu}]$ inside the Chow ring of $\PMSDS$, then imposing additional ramification imposes divisorial conditions.
In particular we can obtain a closed formula for the class of $[\cl{\Exc}]\in \CH(\cl{\Excmu})$ as follows.

Let $\ramp$ be a ramification profile. Recall the sublocal system $\calH_{\ramp}\subseteq \calH$ which is spanned by the absolute homology bundle $\calH_{\abs}$ whose fiber over a point $(X,\omega)$ is $H_1(X - P(\omega))$, together with all relative cycles whose end points are contained in a fiber corresponding to a partition $\lambda_i$ of $\ramp$.
Restricted to $\Excmu$ the local systems $\calH_{\ramp}/ \calH_{\abs}$ has trivial monodromy and we can thus choose a flag of subbundles
\[
\calU_{\bullet}: \calH_{\abs} = \calU_0 \subsetneq \ldots \subsetneq \calU_l = \calH_{\ramp}.
\]
In fact we can choose the flag such that $\calU_i/\calU_{i-1}$ is generated by a single cycle with endpoints in the same fiber.
We now let $W_i(\calU_{\bullet})\subseteq \BIC$ be the collection of two level graph such that for the top level restrictions we have  $\calU_{i,\lG}^{[0]} = \calU_{i-1,\lG}^{[0]}$. Recall the definition of the top level restriction from \Cref{def:rlg}. In other words $W_i(\calU_{\bullet})$ consists of the level graphs where the generator of $\calU_{i}/\calU_{i-1}$ is on the bottom level modulo cycles in $\calU_{i-1}$. We stress that $W_i(\calU_{\bullet})$ depends on the choice of flag $\calU_{\bullet}$.
Exactly as in the proof of \Cref{prop:residue-closed} we obtain 
\begin{proposition}\label{prop-torsion}
Let $\ramp$ be a ramification profile. The class of the closure of $\Exc$ inside the Chow ring of $\cl{\Excmu}$ is given by
\[
[\cl{\Exc}] =  (-1)^l\prod_{i=1}^{l}\left( \taut+\sum_{\lG\in W_i(\calU_{\bullet})} \ell_{\lG}\bd[\lG]\right)\in \trd[l]{\cl{\Excmu}},
\]
where $l= \codim_{\Excmu} \Exc$.
\end{proposition}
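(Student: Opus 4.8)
The plan is to run the proof of \Cref{prop:residue-closed} essentially verbatim, with $\cl{\Excmu}$ — whose class in $\CH(\PMSDS)$ is supplied by \Cref{thm:main} — playing the role played there by $\PMSDS$, and with the relative period conditions cutting out $\cl{\Exc}$ imposed one cycle at a time along the flag $\calU_{\bullet}$ from the statement, chosen so that each $\calU_i/\calU_{i-1}$ is generated by a single relative cycle $\gamma_i$ with endpoints in a common fiber of $\ramp$. Set
\[
Z_i:=\cl{\Ann[\calU_i]{\PSt}}\subseteq\PMSDS\qquad(0\le i\le l),
\]
the closure of the annihilator taken inside the open stratum $\PSt$. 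Then $Z_0=\cl{\Excmu}$, $Z_l=\cl{\Exc}$, and $Z_{i-1}\cap\PSt=\Ann[\calU_{i-1}]{}\subseteq\Excmu$ since $\calH_{\abs}\subseteq\calU_{i-1}$. The whole statement now reduces to the per-step claim: $Z_i\hookrightarrow Z_{i-1}$ is a Cartier divisor with
\[
[Z_i]=-\Bigl(\taut+\sum_{\lG\in W_i(\calU_{\bullet})}\ell_{\lG}\bd[\lG]\Bigr)\in\CH[1](Z_{i-1}).
\]
Granting this, each such class equals $c_1(L_i)\cap[Z_{i-1}]$ for the line bundle $L_i:=\calO(1)\otimes\calO\bigl(-\sum_{\lG\in W_i(\calU_{\bullet})}\ell_{\lG}\bd[\lG]\bigr)$, which is the restriction of a line bundle on all of $\PMSDS$, so iterating and applying the projection formula yields
\[
[\cl{\Exc}]=\prod_{i=1}^l c_1(L_i)\cap[\cl{\Excmu}]=(-1)^l\prod_{i=1}^l\Bigl(\taut+\sum_{\lG\in W_i(\calU_{\bullet})}\ell_{\lG}\bd[\lG]\Bigr)\in\trd[l]{\cl{\Excmu}}.
\]

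To prove the per-step claim I would restrict the Deligne-extended evaluation morphism $\cl{\ev}:\calO_{Z_{i-1}}(-1)\otimes(\calU_i/\calU_{i-1})\to\calO_{Z_{i-1}}$ (constructed as in \Cref{sec:Deligne}) to $Z_{i-1}$ and regard it as a section $s$ of $\calO_{Z_{i-1}}(1)\otimes(\calU_i/\calU_{i-1})^{*}$. On $Z_{i-1}\cap\PSt$ the section $s$ is, in period coordinates, the single period over $\gamma_i$; since $\gamma_i\notin\calU_{i-1}$, this is a coordinate function on $Z_{i-1}\cap\PSt$, hence vanishes on no component and cuts out $\Ann[\calU_i]{}\cap\PSt$ with multiplicity one. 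Near a point of $\bd[\lG]\cap Z_{i-1}$ the log-period description of \Cref{sec-coho-boundary} gives $s$, up to a unit, as $\alpha^{[0]}$ when the class of $\gamma_i$ has a nonzero top-level part on $\lG$ modulo $\calU_{i-1}$, and as $t_{\lG}^{\ell_{\lG}}\alpha^{[-1]}$ when $\gamma_i$ lies on the lower level modulo $\calU_{i-1}$ — that is, when $\calU_{i,\lG}^{[0]}=\calU_{i-1,\lG}^{[0]}$, i.e.\ $\lG\in W_i(\calU_{\bullet})$ — the factor $t_{\lG}^{\ell_{\lG}}$ appearing because the universal multi-scale differential on the bottom level of $\bd[\lG]$ is $t^{\ell_{\lG}}\omega^{[-1]}$; here $\alpha^{[0]},\alpha^{[-1]}$ are log periods belonging to a local coordinate system. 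Thus $s$ vanishes identically along $\bd[\lG]\cap Z_{i-1}$ exactly for $\lG\in W_i(\calU_{\bullet})$, and there to order exactly $\ell_{\lG}$; dividing $s$ by $\prod_{\lG\in W_i(\calU_{\bullet})}t_{\lG}^{\ell_{\lG}}$ produces a regular section of $L_i\otimes(\calU_i/\calU_{i-1})^{*}$ whose zero locus is precisely $Z_i$. Since $\calU_i$ and $\calU_{i-1}$ are globally defined, their Deligne extensions have vanishing Chern classes (\Cref{prop:vanishing}), hence so does that of the quotient $\calU_i/\calU_{i-1}$; combined with $\taut=c_1(\calO(-1))$ this gives the asserted formula for $[Z_i]$.

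I expect the only genuine work to be the boundary bookkeeping above: confirming that $W_i(\calU_{\bullet})$ is exactly the set of boundary divisors along which the extended evaluation section vanishes identically and that the order of vanishing there is exactly $\ell_{\lG}$. This is, however, nothing more than the rank-one case of the zero-locus analysis already carried out in \Cref{prop:zerolocus}, so it requires no new input. A minor caveat is that $\cl{\Excmu}$ need not be smooth; but every $Z_i$ is the zero scheme of a section of a line bundle on $Z_{i-1}$, so all intersections that occur are with Cartier divisors, and the operational Chow formalism on $\cl{\Excmu}$ — the same one invoked in the proof of \Cref{thm:main} — suffices.
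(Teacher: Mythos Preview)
Your proposal is correct and follows exactly the approach the paper takes: the paper's proof is literally the one sentence ``Exactly as in the proof of \Cref{prop:residue-closed} we obtain'', and you have faithfully unpacked that reference, defining the chain $Z_i=\cl{\Ann[\calU_i]{\PSt}}$, analyzing the Deligne-extended evaluation section on the rank-one quotient $\calU_i/\calU_{i-1}$ in log-period coordinates, reading off the vanishing order $\ell_{\lG}$ along $\bd[\lG]$ for $\lG\in W_i(\calU_{\bullet})$, and concluding by push-pull.
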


In particular we 
have
\[
[\cl{\Exc}] =  (-1)^l\prod_{i=1}^{l}\left( \taut+\sum_{\lG\in W_i(\calU_{\bullet})} \ell_{\lG}\bd[\lG]\right)\cdot [\cl{\Excmu}]\in \CH(\PMSDS).
\]

\subsection*{Genus zero and realizability of branched covers of the sphere}

Every stratum of  $\ramp$-exact differentials can be decomposed as
\[
\cl{\Exc} \subseteq \cl{\Excmu} \subseteq \PMSDS^{\res}\subseteq \PMSDS,
\]
where $\PMSDS^{\res}$ is the closure of the stratum of residueless differentials.
We have closed formulas for the class of $\Exc$ inside $\Excmu$ and for $\PMSDS^{\res}\subseteq \PMSDS$. Thus if we let $\iota: \PMSDS^{\res}\subseteq \PMSDS$ be the inclusion and $\alpha \in \divR(\PMSDS)$ be a tautological class such that $\iota^*\alpha = [\cl{\Excmu}]\in \divR\PMSDS^{\res}$, then
\[
[\cl{\Exc}] =  (-1)^{k+l}\prod_{i=1}^{l}\left( \taut+\sum_{\lG\in W_i(\calU_{\bullet})} \ell_{\lG}\bd[\lG]\right)  \prod_{j=1}^{k}\left( \taut+\sum_{\lG\in W_j(\vb_{\bullet})} \ell_{\lG}\bd[\lG]\right)\cdot \alpha\in\divR(\PMSDS),
\]
where we combined \Cref{prop:residue-closed} and \Cref{prop-torsion}.

For general $g$, we can only use the recursive blowup procedure to compute $\alpha$ but for $g=0,1$ the codimension is small enough that we can obtain closed formulas.

For genus zero, every residueless differential is exact and thus $\PMSDS^{\res} = \cl{\Excmu}$, i.e., we have $\alpha=1$.

\begin{proposition}\label{prop-g-0}
Let $\Exc$ be a stratum of $\ramp$-exact differentials in genus $g=0$. Then
\[
[\cl{\Exc}] =  (-1)^{k+l}\prod_{i=1}^{l}\left( \taut+\sum_{\lG\in W_i(\calU_{\bullet})} \ell_{\lG}\bd[\lG]\right)  \prod_{j=1}^{k}\left( \taut+\sum_{\lG\in W_j(\vb_{\bullet})} \ell_{\lG}\bd[\lG]\right)\in \trd[k+l]{\PMSDS},
\]
where \begin{itemize}

\item $k = \codim_{\Excmu} \Exc,\, l = \codim_{\PMSDS} \Excmu$,
\item $\vb_{\bullet}$ is a complete flag of subbundles of the residue local system $\calH_{\res}=\calH_{\abs}$,
\item $\calU_{\bullet}$ is a complete flag of subbundles of $(\calH_{\ramp}/ \calH_{\abs})_{|\PMSDS^{\res}}$ 
\end{itemize}
\end{proposition}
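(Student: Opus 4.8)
The plan is to derive \Cref{prop-g-0} by combining the two closed formulas already established — \Cref{prop:residue-closed} for residue (equivalently, residueless) subspaces and \Cref{prop-torsion} for the effect of imposing a ramification profile — the only new ingredient being the single feature special to genus zero. That feature is the identity of sub-local systems $\calH_{\abs}=\calH_{\res}$ on $\PSt$: when $g=0$ one has $H_1(X;\CC)=0$, so the natural map $H_1(X\setminus P(\omega);\CC)\to H_1(X;\CC)$ is the zero map and its kernel, which is $\calH_{\res}$ by definition, equals all of $H_1(X\setminus P(\omega);\CC)=\calH_{\abs}$. Taking annihilators gives $\Excmu=\Ann[(\calH_{\abs})]{\PSt}=\Ann[(\calH_{\res})]{\PSt}$, i.e.\ in genus zero every residueless differential is exact; and since $\PSt$ is dense in $\PMSDS$ this persists after taking closures, so $\cl{\Excmu}=\PMSDS^{\res}$. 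Consequently, in the general decomposition $\cl{\Exc}\subseteq\cl{\Excmu}\subseteq\PMSDS^{\res}\subseteq\PMSDS$ the last two inclusions are equalities and the correction class $\alpha$ may be taken to be $1$.

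With this in hand, I would first compute $[\cl{\Excmu}]=[\PMSDS^{\res}]$ in $\CH(\PMSDS)$ by applying \Cref{prop:residue-closed} to the residue subspace defined by the full space $V=\CC^{r}$ (so that $\calV=\calH_{\res}$), which is exactly $\PMSDS^{\res}$; choosing a complete flag $\vb_{\bullet}$ of $\calH_{\res}=\calH_{\abs}$ this yields
\[
[\cl{\Excmu}]=[\PMSDS^{\res}]=(-1)^{l}\prod_{j}\bigl(\taut+\sum_{\lG\in W_j(\vb_{\bullet})}\ell_{\lG}\,\bd[\lG]\bigr)\in\trd[l]{\PMSDS},
\]
with one factor per step of $\vb_{\bullet}$ and $l=\rank\calH_{\abs}=\codim_{\PMSDS}\Excmu$. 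Next I would invoke \Cref{prop-torsion}: choosing a complete flag $\calU_{\bullet}$ of $(\calH_{\ramp}/\calH_{\abs})|_{\cl{\Excmu}}$ — possible because this quotient local system has trivial monodromy over $\Excmu$ — that proposition already outputs $[\cl{\Exc}]$ in $\CH(\PMSDS)$ as a product over the $k=\codim_{\Excmu}\Exc$ steps of $\calU_{\bullet}$ times $[\cl{\Excmu}]$. Substituting the formula just obtained gives
\[
[\cl{\Exc}]=(-1)^{k}\prod_{i}\bigl(\taut+\sum_{\lG\in W_i(\calU_{\bullet})}\ell_{\lG}\,\bd[\lG]\bigr)\cdot[\cl{\Excmu}]=(-1)^{k+l}\prod_{i}\bigl(\taut+\sum_{\lG\in W_i(\calU_{\bullet})}\ell_{\lG}\,\bd[\lG]\bigr)\prod_{j}\bigl(\taut+\sum_{\lG\in W_j(\vb_{\bullet})}\ell_{\lG}\,\bd[\lG]\bigr),
\]
and since every factor is a divisorial tautological class on $\PMSDS$ the product lies in $\trd[k+l]{\PMSDS}$, which is the asserted formula.

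I do not expect a real obstacle here: the genus-zero case is precisely the case in which the recursive blow-up computation of $\alpha$ is vacuous, so the argument collapses to two applications of earlier propositions. The one thing to get right is the bookkeeping, namely checking that the building blocks named in the statement — the collections $W_i(\calU_{\bullet})\subseteq\BIC$ and $W_j(\vb_{\bullet})\subseteq\BIC$, defined through the top-level restriction of \Cref{def:rlg} — are literally the ones produced by \Cref{prop:residue-closed} and \Cref{prop-torsion} for these chosen flags, and that the flag $\calU_{\bullet}$ can be taken on all of $\cl{\Excmu}=\PMSDS^{\res}$ rather than only on the open stratum (it can, since $\calH_{\ramp}/\calH_{\abs}$ has trivial monodromy over $\Excmu$). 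The same scheme handles the $g=1$ case, with the single difference that there $\alpha$ is a non-trivial but low-codimension class of $\cl{\Excmu}$ inside $\PMSDS^{\res}$ rather than $1$.
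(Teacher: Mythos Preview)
Your proposal is correct and follows essentially the same route as the paper: the paper also reduces to combining \Cref{prop:residue-closed} and \Cref{prop-torsion}, and its only genus-zero input is exactly the observation you spell out, that $\calH_{\res}=\calH_{\abs}$ (equivalently, every residueless differential is exact) so that $\cl{\Excmu}=\PMSDS^{\res}$ and $\alpha=1$. Your explicit justification via $H_1(X;\CC)=0$ is a bit more detailed than what the paper writes, but the argument is the same.
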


It is still an open question which ramification profiles $\ramp$ can be realized as branched covers of $\PP^1 \to \PP^1$, see for example \cite{hurwitz-existence} for the current state. A necessary condition is that $\ramp$ satisfies the conditions  of the Riemann-Hurwitz theorem but in general this is not sufficient.
For example in degree $d=4$, the ramification profile $\ramp = (3,1),(2,2),(2,2)$ cannot be realized by a branched covering.
We can verify this by computing that the class of the corresponding stratum of $\ramp$-exact differentials has zero class in $\CH(\PMSDS)$. We do not have an efficient way of checking that a class is zero in the Chow ring of $\PMSDS$ and instead it is easier to pushforward along  $\rho:\PMSDS \to \Mgnbar[0,n]$.
Using the \verb|SAGE| package \verb|admcycles| and \Cref{prop-g-0} we can verify that indeed for $\ramp = (3,1),(2,2),(2,2)$ the class $[\cl{\Exc}]$ is zero in $\CH(\Mgnbar[0,6])$.
Similarly we can check for the remaining degree $d=4$ ramification profiles that the class is non-zero.

Since for a projective variety a subvariety is non-empty if and only if its class in the Chow ring is zero, we get the following criterion for when a branched cover can be realized.

\begin{corollary}Let $\ramp$ be a ramification profile in genus zero. Then $\ramp$ can be realized by a branched covering $\PP^1\to\PP^1$ if and only if the class 
\[
\rho_*\left[\prod_{i=1}^{l}\left( \taut+\sum_{\lG\in W_i(\calU_{\bullet})} \ell_{\lG}\bd[\lG]\right)  \prod_{j=1}^{k}\left( \taut+\sum_{\lG\in W_j(\vb_{\bullet})} \ell_{\lG}\bd[\lG]\right)\right] \neq 0\in \CH(\Mgnbar[0,n]).
\]

\end{corollary}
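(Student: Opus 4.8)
The plan is to establish the chain of equivalences
\[
\ramp\text{ realizable}\;\Longleftrightarrow\;\cl{\Exc}\neq\emptyset\;\Longleftrightarrow\;\rho(\cl{\Exc})\neq\emptyset\;\Longleftrightarrow\;\rho_*[\cl{\Exc}]\neq 0\text{ in }\CH(\Mgnbar[0,n]),
\]
and then to substitute for $[\cl{\Exc}]$ the closed expression from \Cref{prop-g-0}, noting that the sign $(-1)^{k+l}$ there does not affect nonvanishing.

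For the first equivalence I would unwind the definition of the stratum of $\ramp$-exact differentials: a point of $\Exc$ is an exact differential $\omega=df$ on a genus-zero curve $X\cong\PP^1$ whose divisor realizes $\mu$ and whose relative periods between marked points in a common fiber vanish, that is, precisely a degree-$d$ rational map $f\colon\PP^1\to\PP^1$ with ramification profile $\ramp$ and no further branch points, taken up to rescaling; conversely any such $f$ produces a point of $\Exc$. Hence $\Exc\neq\emptyset$ exactly when $\ramp$ is realizable, and since $\Exc$ is dense in $\cl{\Exc}$ this is equivalent to $\cl{\Exc}\neq\emptyset$. The second equivalence is immediate, $\rho$ being a morphism.

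The third equivalence is where the genus-zero hypothesis enters, and I expect this to be the only substantial step. On $\PP^1$ a meromorphic differential is determined up to a scalar by its divisor, since the ratio of two such differentials is a global function with empty divisor, hence constant; equivalently, the restriction of $\rho$ to the projectivized stratum $\PSt$ is injective onto $\Mgnbar[0,n]$ (consistently, $\PSt$ and $\Mgnbar[0,n]$ have the same dimension in genus zero). Thus $\rho$ is injective on the dense open $\Exc\subseteq\cl{\Exc}$, so on each irreducible component of $\cl{\Exc}$ it is birational onto its image, and $\rho_*[\cl{\Exc}]$ is the fundamental cycle of $\rho(\cl{\Exc})$ (an effective cycle, all multiplicities one). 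Since $\Mgnbar[0,n]$ is projective, this cycle vanishes in the rational Chow ring iff $\rho(\cl{\Exc})=\emptyset$ — otherwise one caps a power of an ample class with it and takes degrees — which closes the chain. Finally \Cref{prop-g-0} gives $[\cl{\Exc}]=(-1)^{k+l}$ times the product appearing in the corollary, so $\rho_*$ of this product is $(-1)^{k+l}\rho_*[\cl{\Exc}]$; combined with the chain above, this is exactly the asserted criterion.

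The main obstacle is the third step: one must be sure $\rho$ does not collapse $\cl{\Exc}$, i.e.\ that it is generically injective there, which relies on the rigidity of meromorphic differentials on $\PP^1$ together with the dimension coincidence $\dim\PSt=\dim\Mgnbar[0,n]$. In higher genus this breaks down — $\rho$ can strictly lower the dimension of $\cl{\Exc}$ — which is precisely why the corollary is stated only for covers of $\PP^1$ by $\PP^1$.
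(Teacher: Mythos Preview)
Your proof is correct and follows the same line as the paper, which simply records before the corollary that ``for a projective variety a subvariety is non-empty if and only if its class in the Chow ring is [non]zero'' and invokes \Cref{prop-g-0}. You have supplied the one detail the paper leaves implicit: in order to conclude nonvanishing after pushing forward along $\rho$ one needs $\rho$ to be generically finite on $\cl{\Exc}$, and your observation that in genus zero $\rho$ restricts to an isomorphism on the open stratum $\PSt\supseteq\Exc$ handles this cleanly.
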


\subsection*{Genus one}

For genus $1$ the codimension of $\cl{\Excmu}$ inside $\PMSDS^{\res}$ is $2$. In particular the zero locus of the evaluation section has expected codimension, after we remove divisorial components.

We recall the local systems $\calH_{\abs}\supseteq \calH_{\res}$ of absolute homology and residues, respectively. We let $N_i\subseteq \BIC $ be all the two level graphs such that $\rank \calH_{\abs,\lG}^{[0]}-i = \calH_{\res,\lG}^{[0]}, i=0,1$. Here $ \calH_{\abs,\lG}^{[0]}, \calH_{\res,\lG}^{[0]}$ are the images of the specialization morphism of the Deligne extension of $\calH_{\abs}$ and $\calH_{\res}$, respectively. Then $N_i$ corresponds exactly to all two level graphs such that $A_{\lG}^{[0]}$ has codimension $1+i$ inside $\PMSDS$.

\begin{proposition}\label{prop-g-1} Let $\Exc$ be a stratum of $\ramp$-exact differentials in genus $g=1$. Then
\[
\begin{split}
[\cl{\Exc}] = &(-1)^{d}\prod_{i=1}^{l}\left( \taut+\sum_{\lG\in W_i(\calU_{\bullet})} \ell_{\lG}\bd[\lG]\right)  \prod_{j=1}^{k}\left( \taut+\sum_{\lG\in W_j(\vb_{\bullet})} \ell_{\lG}\bd[\lG]\right)\times \\
&\left [\left( \taut+\sum_{\lG\in N_0} \ell_{\lG}\bd[\lG]\right)^2 +
\sum_{\lG\in N_1}\ell_{\lG}\bd[\lG]\left( \taut+\sum_{\lG'\in N_1(\lG)} \ell_{\lG'}\bd[\lG']\right)
\right]\in \trd[*]{\PMSDS},
\end{split},
\]
where \begin{itemize}
\item $d = \codim_{\PMSDS} \Exc$,
\item $N_i\subseteq \BIC $ is the set of two level graphs such that $\rank \calH_{\abs,\lG}^{[0]}-i = \calH_{\res,\lG}^{[0]}$,
\item  $N_1(\lG)$ is the set of all two level graphs $\lG'$ with $\lG' \succ \lG'$ and 
\[\rank \calH_{\abs,\lG+\lG'}^{[0]}-1 = \rank\calH_{\res,\lG+\lG'}^{[0]}.\]
\end{itemize}
\end{proposition}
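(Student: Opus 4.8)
The plan is to exploit the factorisation
\[
\cl{\Exc}\subseteq\cl{\Excmu}\subseteq\PMSDS^{\res}\subseteq\PMSDS
\]
and to reduce the statement to \Cref{prop-torsion}, \Cref{prop:residue-closed}, and a single missing ingredient: the class of $\cl{\Excmu}$ inside $\PMSDS^{\res}$. As explained just before the proposition, \Cref{prop-torsion} gives $[\cl{\Exc}]$ inside $\cl{\Excmu}$ as $(-1)^{l}\prod_{i}\big(\taut+\sum_{\lG\in W_i(\calU_{\bullet})}\ell_{\lG}\bd[\lG]\big)$, and \Cref{prop:residue-closed} applied to the residueless locus gives $[\PMSDS^{\res}]$ inside $\PMSDS$ as $(-1)^{k}\prod_{j}\big(\taut+\sum_{\lG\in W_j(\vb_{\bullet})}\ell_{\lG}\bd[\lG]\big)$. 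By the projection formula it then suffices to exhibit a class $\alpha\in\divR[2](\PMSDS)$ with $\iota^{*}\alpha=[\cl{\Excmu}]\in\divR(\PMSDS^{\res})$ ($\iota$ the inclusion) and to identify $\alpha$ with the bracketed expression. Since $d=\codim_{\PMSDS}\Exc=k+l+2$, the total sign is $(-1)^{d}=(-1)^{k+l}$, so no further sign is attached to $\alpha$.

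For $[\cl{\Excmu}]$ inside $\PMSDS^{\res}$ I would run the construction of \Cref{sec:Deligne}--\Cref{prop:finalblowup} (a special case of \Cref{thm-main-general}) for the globally defined local system $\vb:=\calH_{\abs}/\calH_{\res}$ on $\PMSDS^{\res}$, which in genus $1$ has $\rank\vb=2g=2$. The Deligne-extended evaluation morphism yields a section $\cl{\ev}$ of the rank-$2$ bundle $\vb^{*}\otimes\calO(1)$ whose zero locus, by \Cref{prop:zerolocus}, is $\cl{\Excmu}\cup\bigcup_{\lG\in\BIC}\bdexl[\lG]{0}$. The genus-$1$ collapse of the recursion is the observation that a component $\bdexl[\lG]{0}$ is a divisor (equal to $\bd[\lG]$) precisely when $\rank\vb_{\lG}^{[0]}=0$, i.e.\ $\lG\in N_0$; has codimension $2$ when $\rank\vb_{\lG}^{[0]}=1$, i.e.\ $\lG\in N_1$; and when $\rank\vb_{\lG}^{[0]}=2$ it already forces the top-level differential to be exact, hence lies inside $\cl{\Excmu}$ (the lower level has genus $0$ and is residueless, so automatically exact). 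Using that the non-horizontal boundary is simple normal crossing and that the universal differential on the lower level of $\bd[\lG]$ is $t^{\ell_{\lG}}\omega$, both coordinates of $\cl{\ev}$ are simultaneously divisible by the monomial cutting out $\sum_{\lG\in N_0}\ell_{\lG}\bd[\lG]$; no genuine blow-up is needed, and dividing produces a section $s_{1}$ of $\calE_{1}:=\vb^{*}\otimes\calO(1)\otimes\calO\big(-\sum_{N_0}\ell_{\lG}\bd[\lG]\big)$ with $Z(s_{1})=\cl{\Excmu}\cup\bigcup_{\lG\in N_1}\bdexl[\lG]{0}$, now of pure codimension $2$.

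Because $\vb$ is flat it has trivial Chern classes by \Cref{prop:vanishing}, so $c_{2}(\calE_{1})=\big(\taut+\sum_{N_0}\ell_{\lG}\bd[\lG]\big)^{2}$. Since $s_{1}$ has the expected codimension, $c_{2}(\calE_{1})=[Z(s_{1})]=[\cl{\Excmu}]+\sum_{\lG\in N_1}\ell_{\lG}[\bdexl[\lG]{0}]$, the multiplicity $\ell_{\lG}$ being read off exactly as in \Cref{prop:finalblowup}: in log-period coordinates $s_{1}=(\alpha_a,\,t^{\ell_{\lG}}\alpha_b)$ near a generic point of $\bdexl[\lG]{0}$, which enters the zero scheme with multiplicity $\ell_{\lG}$ (equivalently one works with the non-reduced space of \Cref{sec-generalized-boundary}). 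Finally, for $\lG\in N_1$ the locus $\bdexl[\lG]{0}$ is the annihilator inside $\bd[\lG]$ of the rank-$1$ top-level system $\vb_{\lG}^{[0]}$, so the rank-$1$ case of \Cref{prop:residue-closed} applied on $\bd[\lG]$ gives $[\bdexl[\lG]{0}]=-\bd[\lG]\cdot\big(\taut+\sum_{\lG'\in N_1(\lG)}\ell_{\lG'}\bd[\lG']\big)$, with correction set exactly the $N_1(\lG)$ of the statement, i.e.\ the further two-level degenerations $\lG'$ with $\rank\calH_{\abs,\lG+\lG'}^{[0]}-1=\rank\calH_{\res,\lG+\lG'}^{[0]}$. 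Substituting and solving for $[\cl{\Excmu}]$ produces
\[
[\cl{\Excmu}]=\big(\taut+\textstyle\sum_{N_0}\ell_{\lG}\bd[\lG]\big)^{2}+\sum_{\lG\in N_1}\ell_{\lG}\bd[\lG]\big(\taut+\textstyle\sum_{\lG'\in N_1(\lG)}\ell_{\lG'}\bd[\lG']\big),
\]
a class in $\divR[2](\PMSDS)$ restricting to $[\cl{\Excmu}]$; taking this as $\alpha$ in the first paragraph gives the asserted formula.

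The main obstacle is the local analysis behind the second and third paragraphs: (i) verifying that at a point on $\bigcap_{s}\bd[\lG_s]$ with several $\lG_s\in N_0$ both coordinates of $\cl{\ev}$ vanish to order exactly the product monomial $\prod_s t_{\lG_s}^{\ell_{\lG_s}}$, so that the simultaneous division is valid and no non-divisorial centre has to be blown up before codimension $2$ is reached; (ii) pinning down the multiplicity of each $\bdexl[\lG]{0}$, $\lG\in N_1$, in $Z(s_{1})$ together with its non-reduced structure; and (iii) matching the correction divisors produced by the rank-$1$ formula on $\bd[\lG]$ with the combinatorial set $N_1(\lG)$, which requires the level-by-level bookkeeping of which homology cycles sit on which level of the associated three-level graphs. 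All three are computations in the log-period coordinates of \Cref{sec-coho-boundary}, entirely parallel to the proofs of \Cref{prop:zerolocus}, \Cref{prop:finalblowup} and \Cref{prop:residue-closed}, but the genus-one tracking of the two "genus" cycles is where the care is needed.
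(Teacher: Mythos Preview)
Your proposal is correct and follows essentially the same route as the paper: factor through $\PMSDS^{\res}$, use the rank-$2$ evaluation section for the genus cycles, twist away the divisorial $N_0$ components, subtract the codimension-$2$ $N_1$ components, and compute each of the latter by a rank-$1$ argument on $\bd[\lG]$. Your treatment is in fact slightly more explicit than the paper's---you spell out why the case $\rank\vb_{\lG}^{[0]}=2$ contributes no extra component and flag the local divisibility check at intersections of several $N_0$ divisors, points the paper leaves implicit.
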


\begin{proof}
We recall that the zero locus of the evaluation section is supported on $A_{\star}^{[0]}$ and $A_{\star,\lG}^{[0]}$ for $\lG\in \BIC$. If $A_{\star,\lG}^{[0]}$ is non-empty, then the codimensions are either one or two.
In the first case  $A_{\star,\lG}^{[0]} = \bd[\lG]^{\bullet}$ and $\lG\in N_0$, where we recall that $\bd[\lG]^{\bullet}$ is an infinitesimal thickening of $\bd[\lG]$ with multiplicity $\ell_{\lG}$. The latter case corresponds to $\lG\in N_1$. 
In particular the zero locus of the evaluation section $\ev$, twisted by $\otimes_{\lG\in N_0}\calO(\bd[\lG])^{-\ell_{\lG}}$  has expected codimension and thus
\[
[\cl{\Excmu}] =[A_{\star}^{[0]}] = c_{2}\left( \calO(1) \otimes \bigotimes_{\lG\in N_0}\calO(\bd[\lG])^{-\ell_{\lG}}\otimes \calO^{\oplus 2} \right)- \sum_{\lG\in N_1} [A_{\star,\lG}^{[0]}]\in \CH(\PMSDS^{\res}).
\]
To compute the class of $A_{\star,\lG}^{[0]}$ we will instead compute  the class of  $[A_{\star,\lG}^{[0]}]\in\CH[1]( \PMSDS^{\res} \cap \bd[\lG]^{\bullet})$ and then use push-pull.
Similarly to before we have an evaluation section defined on  $\PMSDS^{\res} \cap \bd[\lG]^{\bullet}$ vanishing on $A_{\star,\lG}^{[0]}$ as well as \[
\bd[\lG']^{\bullet} \cap   \PMSDS^{\res} \cap \bd[\lG]^{\bullet}\text{ for }\lG\in N_1(\lG).
\]
And we have \[
[\bd[\lG']^{\bullet} \cap   \PMSDS^{\res} \cap \bd[\lG]^{\bullet}] = \ell_{\lG'}\bd[\lG']\in \CH[1]( \PMSDS^{\res} \cap \bd[\lG]^{\bullet}).
\]
\end{proof}

For example we can consider the  ramification profile $\ramp = ((2),(2),(2),(2))$, i.e., we consider 
\[
Z :=\{ (E,p,q,r,s)\in\Mgn[1,4]\,|\, \exists f: E\xrightarrow{2:1}\PP^1 \text{ ramified at } p,q,r,s\},
\]
or in other words elliptic curves with a full $2$-torsion package.
Pushing forward the class of $[\cl{\Exc}]$ forward we obtain
\[
\dfrac{75}{2}\kappa_3 - \dfrac{45}{4}\kappa_1\kappa_2 + \dfrac{3}{4}\kappa_1^3 + \dfrac{3}{2}\kappa_2\left(\psi_1 + \psi_2 + \psi_3 +\psi_4\right)\in \CH[3](\Mgnbar[1,4]),
\]
which agrees with the formula computed by \verb|admcycles|.

With additional work one can also obtain a formula for $g=2$ or even higher genus. The main difference is the for higher genus one has extraneous components which are neither divisorial nor of expected dimension and one cannot avoid the blowup procedure, which makes the formulas much more complicated.

\newpage
\appendix

\section{Intersection theory on iterated blowups}
\subsection*{Alexander stacks}

Let $X,Y$ be  separated Deligne-Mumford stacks over $ \CC$.
For a representable morphism $f:X\to Y$ we let $\CHop(X\to Y)$ be the  bivariant Chow groups.
An element $c\in\CHop(X\to Y)$ is an operator $c:\CH(Z)\to\CH(X \times_Y Z \to Z)$ for any morphism $f:Z\to Y$ from a scheme $Z$
, which is compatible with proper pushforward, flat pullback and Gysin homomorphisms for regular embeddings (see \cite[Chapter 17]{Fulton} for a precise definition).
As explained in Vistoli, an operational Chow class $c\in\CHop(X\to Y)$  also defines an operator $c\in\CHop(F\to X\times_Y F)$ for any representable morphism $F\to Y$ of \DM stacks.
Furthermore we have an evaluation map
\[
\ev_Y:\CHop(F\to Y)\to\CH(F), c \mapsto c \cap [Z].
\]
We denote by $\CHop(X):=\CHop(X\xrightarrow{id} X)$ the operational rational Chow ring. The ring structure on $\CHop(X)$ is induced from the product \[\CHop(X\to Y)\otimes\CHop(Y\to Z)\to \CHop(X\to Z)\] and is commutative.

We call a \DM stack $X$ an {\em Alexander stack} if $X$ is equidimensional and for every representable morphism $f:F\to X$ the evaluation  $\ev_X:\CHop(F\to X)\to \CH(F)$ is an isomorphism.
\begin{remark}
In \cite{VistoliAlexander} an additional commutative condition is required.
For applications in this paper we are working over a field of characteristic zero, so that every \DM stack has a resolution of singularities and thus the commutativity is automatically satisfied.
\end{remark}

We will use the following properties from \cite{VistoliIntersection}. Note that in (loc.cit.) everything is stated for schemes, but the proofs work analogously for \DM stacks. 
\begin{lemma}\label{prop:Alexander}
Let $X, Y, Z$ be \DM stacks and $f:X\to Y$ a representable morphism.
\begin{enumerate}
\item Suppose $f$ is smooth of constant fiber dimension. If $Y$ is an Alexander stack, so is $X$.
\item Suppose $f$ is a universal homeomorphism, then $X$ is an Alexander stack if and only if $Y$ is an Alexander stack. In particular, $X$ is Alexander if and only if the reduced stack $X^{red}$ is Alexander.
\item Every smooth \DM stack is an Alexander stack.
\end{enumerate}
\end{lemma}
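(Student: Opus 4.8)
The plan is to treat all three items as the Deligne--Mumford-stack versions of Vistoli's results on Alexander schemes and to transport his scheme-theoretic proofs essentially verbatim; the only genuinely new input one has to supply is that the bivariant Chow formalism of \cite[Ch.~17]{Fulton} --- flat pullback, proper pushforward, refined Gysin homomorphisms for regular embeddings, their mutual compatibilities, and the orientation classes of flat and of lci morphisms --- is available for representable morphisms of \DM stacks with rational coefficients. This is exactly the content of \cite{VistoliIntersection}, and the passage from schemes to representable \DM morphisms is formal once one has $\CH(-)_{\QQ}$ together with these operations in the stacky setting.

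I would prove (3) first, as it is the foundational case. For a connected (hence equidimensional) smooth \DM stack $X$ the evaluation map admits an explicit inverse built from the diagonal. Since $X\to\Spec\CC$ is smooth, $\Delta_X\colon X\to X\times_{\CC}X$ is a regular embedding, so for a representable $g\colon F\to X$, a class $\alpha\in\CH(F)$, and a test object $Z\to X$, one sets $c_\alpha\cdot\gamma:=\iota^{!}(\alpha\times\gamma)\in\CH(F\times_X Z)$ for $\gamma\in\CH(Z)$, where $\iota\colon F\times_X Z\hookrightarrow F\times_{\CC}Z$ is the base change of $\Delta_X$. One then checks, using the standard compatibilities of refined Gysin maps, that this defines a bivariant class $c_\alpha\in\CHop(F\to X)$ and that $\alpha\mapsto c_\alpha$ is a two-sided inverse of $\ev_X$. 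This is Vistoli's argument, and it goes through for \DM stacks because the products and Gysin maps appearing in it only use representability; alternatively one reduces to smooth schemes and to quotients of smooth schemes by finite groups via a presentation of $X$.

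For (1): equidimensionality of $X$ is immediate from smoothness of $f$ and equidimensionality of $Y$, and for a representable $g\colon F\to X$ the composite $F\to X\xrightarrow{f}Y$ is again representable. Since $f$ is flat of constant relative dimension it carries an orientation class $[f]\in\CHop(X\to Y)$ with $[f]\cap[Y]=[X]$, so associativity of the bivariant product yields $\ev_X=\ev_Y\circ\big((-)\cdot[f]\big)$ on $\CHop(F\to X)$; as $Y$ is Alexander, $\ev_Y$ is an isomorphism, and it remains to see that $(-)\cdot[f]\colon\CHop(F\to X)\to\CHop(F\to Y)$ is one. Here I would use that $f$ smooth makes the relative diagonal $\Delta_f\colon X\to X\times_Y X$ a regular embedding, and build the inverse from the refined Gysin map along the base change of $\Delta_f$, checking that the two composites are the identity via the projection formula and the compatibility of Gysin maps with external products. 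For (2): a universal homeomorphism has zero-dimensional fibres, so $X$ is equidimensional if and only if $Y$ is; and with $\QQ$-coefficients such a morphism identifies operational Chow rings compatibly with all the structure maps, so for a representable $F\to X$ the base change $F\times_Y X\to F$ is again a universal homeomorphism admitting a canonical section, and restriction identifies $\CHop(F\to X)$ with $\CHop(F\to Y)$ compatibly with the evaluation maps; hence $\ev_X$ is an isomorphism precisely when $\ev_Y$ is. The ``in particular'' statement is the special case $f\colon X^{red}\hookrightarrow X$.

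The conceptual content here is slight; the real work is foundational, and the main obstacle is to check that no step in Vistoli's arguments uses a property of schemes unavailable for representable morphisms of \DM stacks. In practice this amounts to invoking, from \cite{VistoliIntersection}, the construction of $\CH(-)_{\QQ}$, of flat pullback, of proper pushforward and of refined Gysin maps for regular embeddings in the \DM setting, together with their compatibilities. A second subtlety, flagged in the Remark preceding the lemma, is that the definition of an Alexander scheme in \cite{VistoliAlexander} includes an extra commutativity axiom; to apply his results one must know that this axiom is automatic in our situation, which holds because over $\CC$ every \DM stack admits a resolution of singularities.
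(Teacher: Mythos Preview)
Your proposal is correct and aligns with the paper's approach: the paper's proof consists entirely of citations to \cite[Prop.~2.2, Prop.~2.7]{VistoliAlexander} and \cite[Prop.~5.6]{VistoliIntersection}, together with the remark preceding the lemma that Vistoli's scheme-theoretic arguments carry over verbatim to representable morphisms of \DM stacks. You have gone further by actually sketching those arguments and identifying the foundational inputs needed from \cite{VistoliIntersection}, which is more than the paper does, but the underlying strategy is identical.
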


\begin{proof}
(1) is \cite[Prop 2.2]{VistoliAlexander}, (2) is \cite[Prop 2.7]{VistoliAlexander} and
(3) is \cite[Prop 5.6]{VistoliIntersection}.

\end{proof}

The following is most likely well known to experts but we could not find a reference.

\begin{proposition}
Suppose that $X\subseteq Y$ are Alexander stacks and $X$ is regularly embedded in $Y$.
Then $\ptf{Y}:=\Bl_{Y} X$ is an Alexander stack.
\end{proposition}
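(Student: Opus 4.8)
The plan is to reduce the statement to the characterization of Alexander stacks in terms of bivariant Chow groups and to exploit the two facts we already have: by \Cref{prop:Alexander}(2) being Alexander is insensitive to universal homeomorphisms, and by \Cref{prop:Alexander}(1) it ascends along smooth morphisms of constant fiber dimension. Since $X \subseteq Y$ is regularly embedded, say of codimension $c$, the blowup $\ptf{Y} = \Bl_X Y$ comes with a projective morphism $\pi:\ptf{Y}\to Y$ which is an isomorphism over $Y\setminus X$, and the exceptional divisor $E = \PP(\nb{X}{Y})$ is a $\PP^{c-1}$-bundle over $X$; in particular $\pi$ is \emph{not} smooth, so we cannot simply invoke \Cref{prop:Alexander}(1) directly on $\pi$.

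The key step I would carry out is to show that for \emph{any} representable morphism $F\to\ptf{Y}$ the evaluation map $\ev_{\ptf{Y}}:\CHop(F\to\ptf{Y})\to\CH(F)$ is an isomorphism, using the standard blowup excision/decomposition of Chow groups. First I would recall the decomposition of $\CH(\ptf{Y})$ (and more generally, after base change, of $\CH(\ptf{Y}\times_Y W)$ for $W\to Y$): writing $U = Y\setminus X$, there is a right-split exact sequence
\[
0 \to \CH(E) \to \CH(E)\oplus\CH(Y) \to \CH(\ptf{Y}) \to 0
\]
coming from $\pi^*$, $j_{E*}$ and the projection $E\to X$, valid because $X\hookrightarrow Y$ is a regular embedding (this is the blowup formula of \cite[Ch.~6]{Fulton}, which holds for Deligne--Mumford stacks). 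The same holds after any base change $W\to Y$ by the blowup formula applied to $\Bl_{X\times_Y W}W$ when the pullback remains a regular embedding; in the general case one uses that $\CHop$ classes are detected on all such base changes. Since $Y$ and $X$ are Alexander, and $E\to X$ is a $\PP^{c-1}$-bundle — hence $E$ is Alexander by \Cref{prop:Alexander}(1) — one can promote this decomposition of ordinary Chow groups, compatibly over all base changes $F\to\ptf{Y}$, to the corresponding decomposition of bivariant groups $\CHop(F\to\ptf{Y})$, and then the isomorphism $\ev_{\ptf{Y}}$ follows from the analogous isomorphisms $\ev_Y$, $\ev_X$, $\ev_E$ that hold by hypothesis, together with naturality of the blowup sequence under pushforward along $E\to X$ and under $\pi^*$.

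The main obstacle I expect is making the bivariant version of the blowup decomposition genuinely functorial in the test stack $F$: one must check that the maps $\pi^*$, $j_{E*}$, and the $\PP^{c-1}$-bundle projection assemble into morphisms of bivariant groups (not just of Chow groups of a fixed stack) and that the resulting short exact sequence of bivariant groups is compatible with $\ev$. A clean way around this is to argue more geometrically: factor the situation through the deformation to the normal cone, or observe that $\ptf{Y}$ admits, \'etale-locally on $Y$, an embedding into a $\PP^{c-1}$-bundle over $Y$ as a regularly embedded divisor-type locus, reducing to cases where \Cref{prop:Alexander}(1)–(3) apply directly; but I would first attempt the direct bivariant blowup-sequence argument since it is the most transparent and all the needed inputs ($Y,X$ Alexander, $E\to X$ a projective bundle, regular embedding) are already in hand.
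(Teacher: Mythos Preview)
Your proposal is correct and follows essentially the same route as the paper: set up the blowup short exact sequence both for ordinary and for bivariant Chow groups, note that the exceptional divisor is Alexander as a projective bundle over $X$ via \Cref{prop:Alexander}(1), and conclude by a five-lemma (the paper says ``4-lemma'') diagram chase. The paper addresses your functoriality concern by simply writing down the explicit bivariant maps $\ptf{\alpha},\ptf{\beta}$ (built from $g^*$, $i_*(\,\cdot\,)[i]$, $j_*(\,\cdot\,)[j]$, $f^*$ in the sense of \cite[Ch.~17]{Fulton}) and checking commutativity with $\ev$; one small slip in your displayed sequence is that the leftmost term should be $\CH(X)$ (resp.\ $\CHop(X'\to X)$), not $\CH(E)$.
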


\begin{proof}
Let $\ptf{X}$ be the exceptional divisor.
For any representable morphism $Y'\to Y$ we set $X'= X\times_Y Y', \, \ptf{X}' = X' \times_X \ptf{X}, \ptf{Y}'= Y'\times_Y \ptf{Y}$.

We have short exact sequences
\[
\begin{tikzcd}[sep=small]
0 \ar[r] & \CHop(X'\to X) \ar[r,"\ptf{\alpha}"] \ar[d,"\ev_X"] &\CHop(\ptf{X}'\to\ptf{X}) \oplus \CHop(Y'\to Y)\ar[r,"\ptf{\beta}"] \ar[d,"\ev_{\ptf{X}}\oplus \ev_Y"]& \CHop(\ptf{Y}'\to\ptf{Y}) \ar[r]\ar[d,"\ev_{\ptf{Y}}"] & 0\\
0 \ar[r] & \CH(X') \ar[r, "\alpha", swap]  &\CH(\ptf{X}') \oplus \CH(Y')\ar[r, "\beta", swap] & \CH(\ptf{Y}') \ar[r] & 0\\
\end{tikzcd}
\]
where the vertical maps are the evaluation maps.

The horizontal maps are given by
\[
\begin{split}
\ptf{\alpha}(c)& = (c_{d-1}(E)\cdot g^*c, i_*(c\cdot [i]), \quad \ptf{\beta}(r,s) = -j_*(r\cdot [j]) + f^*s,\\
\alpha(x) &= c_{d-1}(E)\cap g^*x, \quad \beta(\ptf{x},y) = j_*\ptf{x}+f^*y
\end{split}
\]
and the diagram is commutative. (See \cite[Chapter 17]{Fulton} for the operations on operational Chow groups.)
Since $\ptf{X}$ is a projective bundle, $\ptf{X}$ is Alexander if $X$ is Alexander.
By the 4-lemma it follows that the evaluation morphism $\ev_{\ptf{Y}}$ is an isomorphism.
\end{proof}

From the view point of intersection theory Alexander stacks behave very similar to smooth \DM stacks. In particular Chow rings have an intersection product and satisfy a projection formula.
Given an Alexander stack $Z$ and $\alpha \in \CH(Z)$, we let $T_{\alpha}$ be the unique operator with $T_{\alpha} \cap [Z] = \alpha$.
The intersection product on $\CH(Z)$ takes the form
\[
\alpha\cdot\beta = T_{\alpha}\cap \beta=\alpha\cap T_{\beta}= T_{\alpha}\cap T_{\beta}\cap [Z].
\]
For any proper, representable morphism $f:X \to Z$ there is a  projection formula
\begin{equation}\label{eq:projection}
f_*(f^*\beta\cdot \alpha) = \beta \cdot f_*\alpha.
\end{equation}
induced by the projection formula on operational Chow groups.

For any representable morphism  $f:X\to Y$ we have a pullback on operational Chow groups, on the hand if $f$ is additionally a local complete intersection morphism we also have a pullback on Chow groups.

\begin{lemma}Let $f:X\to Y$ be a representable  local complete intersection morphism of Alexander stacks. Then
\[
f^*T_{\alpha} = T_{f^*\alpha}.
\]
\end{lemma}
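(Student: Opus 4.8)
The plan is to reduce the identity to a computation on fundamental classes. Both $f^*T_\alpha$ and $T_{f^*\alpha}$ are elements of $\CHop(X)$, and since $X$ is an Alexander stack the evaluation map $\ev_X\colon\CHop(X)\to\CH(X)$ is an isomorphism, in particular injective. So it is enough to check that the two operators agree after evaluation on $[X]$. For the right-hand side this is immediate: $T_{f^*\alpha}\cap[X]=f^*\alpha$ by the very definition of $T_{f^*\alpha}$ (recall that $Y$ being Alexander is what makes $T_\alpha$, and hence $T_{f^*\alpha}$, a well-defined operator). Thus the whole lemma reduces to the single identity $(f^*T_\alpha)\cap[X]=f^*\alpha$.

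To prove this I would unwind the definition of the operational pullback. For $c\in\CHop(Y)$ and a morphism $q\colon V\to X$, the class $f^*c$ acts on $\CH(V)$ exactly as $c$ does when $V$ is regarded as a $Y$-scheme via $f\circ q$; taking $q=\id_X$ this gives $(f^*T_\alpha)\cap[X]=T_\alpha\cap^{f}[X]$, where $\cap^{f}$ denotes the action of $T_\alpha\in\CHop(Y)$ on $[X]\in\CH(X)$ through $f\colon X\to Y$. Now I would use that $f$ is a representable l.c.i.\ morphism of equidimensional stacks (equidimensionality being part of the Alexander hypothesis) to write $[X]=f^*[Y]$ for the l.c.i.\ Gysin pullback, and then invoke the structural fact that operational classes commute with l.c.i.\ Gysin maps: applied to the l.c.i.\ morphism $f$ itself this yields $T_\alpha\cap^{f}\bigl(f^*[Y]\bigr)=f^*\bigl(T_\alpha\cap[Y]\bigr)=f^*\alpha$. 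Combining, $(f^*T_\alpha)\cap[X]=f^*\alpha$, which is what was needed, and injectivity of $\ev_X$ concludes.

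The step that requires the most care is the identity $f^*[Y]=[X]$ in the (possibly singular) Deligne--Mumford setting: for $f$ flat it is just flat pullback of the fundamental cycle, and in general one reduces to this by factoring $f$ as a regular embedding followed by a smooth morphism and using that the normal cone of a regular embedding is its normal bundle, so that the Gysin map carries $[Y]$ to $[X]$; here equidimensionality of the Alexander stacks $X$ and $Y$ is what makes the statement meaningful. The remaining subtlety is purely organizational, namely keeping the three notions of ``pullback'' --- the pullback of operational classes, the action of an operational class through a morphism, and the l.c.i.\ Gysin pullback on Chow groups --- matched up consistently; once this is done, every step above is one of the structural axioms of the bivariant theory as in \cite[Chapter~17]{Fulton} and its stacky adaptation by Vistoli, so no further computation is needed.
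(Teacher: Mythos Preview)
Your proposal is correct and follows essentially the same approach as the paper: reduce via injectivity of $\ev_X$ to the identity $(f^*T_\alpha)\cap[X]=f^*\alpha$, then use $f^*[Y]=[X]$ together with compatibility of bivariant classes with l.c.i.\ Gysin maps. Your account is in fact more explicit than the paper's about the factoring argument for $f^*[Y]=[X]$ (which the paper handles by citing \cite[Lemma~5.5]{VistoliIntersection}) and about keeping the various pullbacks straight.
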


\begin{proof}
Since $f$ is a local complete intersection morphism, we can decompose it into a regular embedding and a smooth morphism. In particular we have $f^*[Y] = [X]$ by \cite[Lemma 5.5]{VistoliIntersection}.
Thus
\[
f^*T_{\alpha} \cap [Y] = f^*T_{\alpha}\cap f^*[X] = f^*(T_{\alpha}\cap [X])= f^*\alpha.
\]
Since $X$ is an Alexander stack, we conclude $f^*T_{\alpha} = T_{f^*\alpha}$.
\end{proof}

\begin{convention}
We assume for the rest of the appendix that all \DM stacks are Alexander stacks and from now on we identify $\CHop$ and $\CH$ via the evaluation map.
\end{convention}

\subsection*{Tautological rings}
Suppose $Y$ is an Alexander stack and $X\subseteq Y$ a closed substack.
Recall that $X$ is called regularly embedded if etale locally $Y$ is defined by a regular sequence. It follows that the normal sheaf $\nb{X}{Y}$ is a vector bundle of rank $\codim_Y X$.

Given a subring $\Rtr(Z)\subseteq \CH(Z)$ and a regular embedding $i:W\hookrightarrow Z$ we define $R^*(W)=i^*R^*(Z)$.
Additionally, let $\pi:\ptf{Z}\to Z$ be the blowup of $Z$ along $W$. We define $\Rtr(\ptf{Z})= \pi^*\Rtr(Z)[E]$, where $E$ denotes the exceptional divisor.

For a substack $\ptf{Y}$  of $\ptf{Z}$ we then also let $\Rtr(\ptf{Y})$ be the pullback of $\Rtr(\ptf{Z})$.

Thus for every regularly embedded subscheme of a blowup of $Z$ along a regularly embedded substack we have defined a ring $R^*(Y)$.
We call the assignment $Y\mapsto R^*(Y)$ a {\em tautological ring}.

\begin{remark}
Suppose $R^*: Y\subseteq Z$ is regularly embedded and let $\ptf{Z}=\Bl_W Z, \ptf{Y} = \Bl_{W\cap Y} Y$.  Then the inclusion $j:\ptf{Y}\to\ptf{Z}$ is a regular embedding (see \Cref{sec:equationproper}) and we have a commutative diagram 
\[
\begin{tikzcd}
\ptf{Y}\ar[d,swap, "g"]\ar[r,"j"] & \ptf{Z}\ar[d,"f"]\\
Y\ar[r,swap,"i",] & Z
\end{tikzcd}
\]
Furthermore, 
\[
R^*(\ptf{Y}) = j^*R^*(\ptf{Z}) = g^*(R^*(Y))[E_Y],
\]
where $E$ is the exceptional divisor in $\ptf{Z}$ and $E_Y=j^*E$ the exceptional divisor in $\ptf{Y}$.
In particular there is no ambiguity in defining $R^*$.
\end{remark}

\subsection*{Algebraically clean and transverse intersection}
\begin{definition}\label{appendix-transverse}
We say two substacks  $W,Y\subseteq Z$ intersect {\em algebraically cleanly} inside $Z$ if every morphism in the Cartesian diagram
\[
\begin{tikzcd}
X =W\times_Z Y\ar[r] \ar[d]& Y \ar[d]\\
W \ar[r] & Z
\end{tikzcd}
\]
is a regular embedding.
If furthermore
\[
\codim_Z(X) = \codim_Z(W) + \codim_Z(Y),
\]
we say the intersection is {\em algebraically transverse}.

\end{definition}

\begin{remark}
The analogy to clean and transverse intersection of smooth varieties is as follows.
If $X$ and $W$ have algebraically clean intersection, then etale-locally we can find
regular sequences such that the ideal sheaves $\calI(W)$ and $\calI(Y)$ are generated by 
\[
\calI(W) =  (a_1,\ldots,a_n),\quad
\calI(Y) = (a_1,\ldots,a_m,b_1,\ldots,b_k), m\leq n.
\]
Here we allow $m=0$, in which case the intersection is algebraically transverse.
\end{remark}

\subsection*{Total Chern classes}
\begin{definition} \label{sec:chp}

Suppose $i:V\hookrightarrow Z$ is  regularly embedded of codimension $k$.
We say that $V$ {\em has a \CHP} if there exists a polynomial \[
\chp[(t)]{V}{Z}=\cinbe[0]{V}{Z}+ \cinbe[1]{V}{Z}+\ldots+ \cinbe[k-1]{V}{Z}+\cinbe[k]{V}{Z}\in\CH(Z)
\] such that
\begin{itemize}
\item $\cinbe[0]{V}{Z}\cap[Z]=[Z],\, \cinbe[k]{V}{Z}\cap [Z]=[V]$.
\item
$ i^*\cinbe{V}{Z}=\cinb{V}{Z}$ for all $i$.
\end{itemize}
\end{definition}

Note that \CHPs are  unique only up to elements in  $\ker(i^*:\CH(Z)\to\CH(X))$.
We stress that our convention differs from \cite{FultonMacPherson} where instead a polynomial is used and also the indexing is different.

\begin{proposition} The \CHP has the following properties.
\begin{enumerate}
\item If $D\subseteq Z$ is a divisor, then $\chp{D}{Z}= 1+[D]$.

\item Suppose $X$ and $Y$ intersect algebraically transversely inside $Z$ and $X$ has a \CHP  $\chp{X}{Z}$ in $Z$. Then 
\[
\chp{X\cap Y}{Y} = \chp{X}{Z},
\]
 as well as 
\[
\chp{X\cap Y}{Z}= \chp{X}{Z}\cdot\chp{Y}{Z},
\]
omitting evident pullbacks.
\end{enumerate}
\end{proposition}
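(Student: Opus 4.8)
The plan is to verify both statements directly against the defining properties of a \CHP in \Cref{sec:chp}: exhibit an explicit candidate polynomial in $\CH(Z)$ (resp.\ $\CH(Y)$), check that its bottom term caps the fundamental class to itself, its top term caps the fundamental class to the cycle of the subvariety, and its restriction to the subvariety is the total Chern class of the relevant normal bundle; uniqueness up to $\ker i^*$ then yields the stated equalities, with the standing convention that evident pullbacks are suppressed. For part (1), $D\subseteq Z$ is an effective Cartier divisor, hence regularly embedded of codimension one with $\nb{D}{Z}=i^*\calO_Z(D)$. I would take $1+[D]\in\CH(Z)$, note $1\cap[Z]=[Z]$ and that $[D]\cap[Z]$ is the cycle $[D]$, and note $i^*(1+[D])=1+c_1(i^*\calO_Z(D))$ is the total Chern class of the line bundle $\nb{D}{Z}$; this is exactly the list of axioms for $k=1$, so $\chp{D}{Z}=1+[D]$.

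For part (2) the core is to isolate two standard facts about an algebraically transverse intersection $X\cap Y=X\times_Z Y$ in the sense of \Cref{appendix-transverse}. First, the normal bundle pulls back from either factor: with $p\colon X\cap Y\to X$ the projection, $\nb{X\cap Y}{Y}\cong p^*\nb{X}{Z}$, and there is a short exact sequence $0\to\nb{X\cap Y}{Y}\to\nb{X\cap Y}{Z}\to\nb{Y}{Z}|_{X\cap Y}\to 0$. Second, the top term $\cinbe[k]{X}{Z}$ of a \CHP of $X$ may be taken to be (the class of) the canonical orientation bivariant class of the regular embedding $X\hookrightarrow Z$, whose cap action is refined Gysin pullback; and refined Gysin pullback of a fundamental class along a regular embedding gives the fundamental class of the fiber product precisely when the intersection has the expected dimension, so $\cinbe[k]{X}{Z}\cap[Y]=[X\cap Y]$ (this is the content of algebraic transversality; cf.\ \cite{Fulton}, Ch.~6--7, transported to the operational framework of the appendix). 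In applications the \CHP arises as the total Chern class of a vector bundle with a regular section cutting out $X$ (as in \Cref{prop:finalblowup}), in which case this second fact is just the stability of ``top Chern class $=$ class of the regular zero locus'' under restriction of a transverse section.

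Granting these, for the first identity I would write $\iota\colon Y\hookrightarrow Z$ and $j\colon X\cap Y\hookrightarrow Y$, so that $\iota\circ j$ factors as $(X\hookrightarrow Z)\circ p$, and verify that $\iota^*\chp{X}{Z}$ is a \CHP for $X\cap Y$ in $Y$: its bottom term caps $[Y]$ to $[Y]$; its top term $\iota^*\cinbe[k]{X}{Z}$ caps $[Y]$ to $[X\cap Y]$ by the second fact; and $j^*\iota^*\cinbe[i]{X}{Z}=p^*c_i(\nb{X}{Z})=c_i(\nb{X\cap Y}{Y})$ by the first. For the second identity --- which in addition requires that $Y$ carry a \CHP $\chp{Y}{Z}$, as it does in all our applications (e.g.\ when $Y$ is a complete intersection of boundary divisors, where $\chp{Y}{Z}=\prod(1+[D_i])$) --- set $k=\codim_Z X$, $l=\codim_Z Y$, so $\codim_Z(X\cap Y)=k+l$, and I would check that $\chp{X}{Z}\cdot\chp{Y}{Z}$, of top degree $k+l$, is a \CHP for $X\cap Y$ in $Z$: its bottom term is $1$; its top term $\cinbe[k]{X}{Z}\cdot\cinbe[l]{Y}{Z}$ caps $[Z]$ to $\cinbe[k]{X}{Z}\cap[Y]=[X\cap Y]$; and its restriction to $X\cap Y$ is $c(\nb{X}{Z})|_{X\cap Y}\cdot c(\nb{Y}{Z})|_{X\cap Y}$, which equals $c(\nb{X\cap Y}{Z})$ by the short exact sequence above and the Whitney formula.

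I expect the only real obstacle to be the second fact of the previous paragraph: making precise, inside the operational/Alexander-stack framework fixed at the start of the appendix, that algebraic transversality forces the refined product of two fundamental classes to be the fundamental class of their intersection, and that a \CHP can be chosen so that its top term implements this refined product. Once that is pinned down --- either abstractly via canonical orientation classes, or concretely via the vector-bundle-with-section description used throughout the paper --- the rest is the Whitney sum formula together with the functoriality of pullbacks of vector bundles and of operational classes.
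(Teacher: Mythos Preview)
Your argument is correct in substance. The paper itself gives no proof at all: it simply cites \cite[Lemma 5.1]{FultonMacPherson}. So your direct verification against the axioms of \Cref{sec:chp} is more detailed than what appears here, and follows exactly the pattern of that reference.

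One remark on the ``only real obstacle'' you flag at the end: in the Alexander framework fixed in the appendix, $\CHop(Z)\cong\CH(Z)$ via evaluation on $[Z]$, so the axiom $\cinbe[k]{X}{Z}\cap[Z]=[X]$ forces the top term of any \CHP to \emph{equal} the cycle class $[X]\in\CH(Z)$, with no ambiguity. (The non-uniqueness up to $\ker i^*$ affects only the intermediate terms.) Hence $\iota^*\cinbe[k]{X}{Z}=\iota^![X]$, where $\iota^!$ is the refined Gysin map for the regular embedding $Y\hookrightarrow Z$, and algebraic transversality gives $\iota^![X]=[X\cap Y]$ directly from \cite[Ch.~6--7]{Fulton}. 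You do not need to argue via a vector bundle with regular section, nor worry about whether some particular representative of the \CHP implements the refined product; the identification is forced. With that simplification, your verification of both identities in part (2) goes through cleanly.
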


\begin{proof}
\cite[Lemma 5.1]{FultonMacPherson}
\end{proof}

Our main goal is to determine whether the \CHP can be chosen to be contained in some subring $\Rtr(Z)\subseteq \CH(Z)$. For example $R^*$ could be the tautological ring of the moduli space of curves.

\begin{definition}
Let $\Rtr(Z)\subseteq \CH(Z)$ be a tautological subring and $V\subseteq Z$ regularly embedded.

We say that $V$ {\em has a \CHP in $Z$ defined over} $\Rtr$ if there exists a \CHPs $\chp{V}{Z}\in \Rtr(Z)$. 
\end{definition}

\begin{proposition} \label{prop:chern} Suppose $X\subseteq Y\subseteq Z$ are regularly embedded substacks.

\begin{enumerate}
\item If $\chp{X}{Y},\chp{Y}{Z}$ are defined over $\Rtr$, then
$\chp{X}{Z}$ is defined over $\Rtr$.
\item If $X = Y \cap W \subseteq Z$ is an algebraically transverse intersection for some substack $W\subseteq Z$ and $\chp{W}{Z}\in R^*(Z)$, then $\chp{X}{Y}= i^*\chp{W}{Z}\in R^*(Y)$, where $i:Y\hookrightarrow Z$ is the inclusion.
\end{enumerate}

\end{proposition}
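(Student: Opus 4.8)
The plan is to prove the two parts separately. Part (2) will be a quick deduction from the product formula for \CHPs established in the Proposition just above (following \cite[Lemma 5.1]{FultonMacPherson}); the real content is in part (1), where the idea is to imitate the multiplicativity of total Chern classes along a tower of regular embeddings by lifting $\chp{X}{Y}$ from $Y$ to the ambient $Z$ and multiplying by $\chp{Y}{Z}$ --- the point being that such a lift can be kept inside $\Rtr$ since, by definition, $\Rtr$ on a regularly embedded substack is the pullback of $\Rtr$ from the ambient stack.

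For part (2): since $X=Y\cap W$ is an algebraically transverse intersection inside $Z$, and $W$ has a \CHP $\chp{W}{Z}$ in $Z$ (implicit in the hypothesis $\chp{W}{Z}\in\Rtr(Z)$), the preceding Proposition, applied with the two transverse substacks taken to be $W$ and $Y$, yields $\chp{X}{Y}=\chp{W\cap Y}{Y}=i^{*}\chp{W}{Z}$, where $i\colon Y\hookrightarrow Z$ is the inclusion and the ``evident pullback'' has been made explicit. As $\chp{W}{Z}\in\Rtr(Z)$ and $\Rtr(Y)=i^{*}\Rtr(Z)$ by definition, we get $\chp{X}{Y}=i^{*}\chp{W}{Z}\in\Rtr(Y)$, which is the claim.

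For part (1): write $i\colon X\hookrightarrow Y$, $j\colon Y\hookrightarrow Z$ and $\iota=j\circ i\colon X\hookrightarrow Z$, all regular embeddings, and note $\codim_Z X=\codim_Y X+\codim_Z Y$. First I would use the conormal exact sequence of the tower, which, all embeddings being regular, dualizes to a short exact sequence of vector bundles
\[
0\longrightarrow\nb{X}{Y}\longrightarrow\nb{X}{Z}\longrightarrow i^{*}\nb{Y}{Z}\longrightarrow 0,
\]
so that $c(\nb{X}{Z})=c(\nb{X}{Y})\cdot i^{*}c(\nb{Y}{Z})$ in $\CH(X)$ by the Whitney formula. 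Next, since $\chp{X}{Y}\in\Rtr(Y)=j^{*}\Rtr(Z)$ and $\Rtr(Z)$ is graded, choose a homogeneous lift $\gamma\in\Rtr(Z)$ of $\chp{X}{Y}$, supported in codimensions $0,\dots,\codim_Y X$ and with codimension-$0$ part equal to $1$. The candidate is then $P:=\gamma\cdot\chp{Y}{Z}\in\Rtr(Z)$, automatically supported in codimensions $0,\dots,\codim_Z X$, and it remains to check that $P$ is a \CHP for $X$ in $Z$. Its codimension-$0$ part is $1\cdot 1=1$. Pulling back, $\iota^{*}P=(i^{*}j^{*}\gamma)\cdot(i^{*}j^{*}\chp{Y}{Z})=c(\nb{X}{Y})\cdot i^{*}c(\nb{Y}{Z})=c(\nb{X}{Z})$ by the displayed sequence, and reading this off in each codimension $p$ gives exactly $\iota^{*}(P_{p})=c_{p}(\nb{X}{Z})$. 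Finally, the top-codimension component of $P$ is the product of the top components of $\gamma$ and of $\chp{Y}{Z}$; capping with $[Z]$ and using first the cap condition for $\chp{Y}{Z}$ (turning $[Z]$ into the class of $Y$) and then the projection formula \eqref{eq:projection} for the proper embedding $j$ combined with the cap condition for $\chp{X}{Y}$, the result is the class of $X$ in $Z$. Thus $P\in\Rtr(Z)$ is a \CHP for $X$ in $Z$ defined over $\Rtr$.

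The step I expect to be the main obstacle is this last, top-degree, verification in part (1): the cap identity available for $\chp{X}{Y}$ lives on $Y$, whereas what is needed is a statement on $Z$ about $P_{\mathrm{top}}\cap[Z]$, and the bridge is the projection formula $\gamma_{\mathrm{top}}\cap j_{*}[Y]=j_{*}(j^{*}\gamma_{\mathrm{top}}\cap[Y])$, available precisely because $j$, and hence $\iota$, is a proper representable morphism of Alexander stacks. Everything else --- the conormal sequence, Whitney multiplicativity, and the identity $\Rtr(Y)=j^{*}\Rtr(Z)$ --- is formal, and part (2) reduces entirely to the already-established product formula.
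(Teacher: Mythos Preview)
The paper states this proposition without proof, so there is nothing to compare against directly; your argument supplies the omitted details and is essentially correct. Part (2) is indeed an immediate consequence of the preceding proposition (the product formula from \cite[Lemma 5.1]{FultonMacPherson}), and your construction in part (1) --- lifting $\chp{X}{Y}$ along $j^{*}$ to some $\gamma\in\Rtr(Z)$, setting $P=\gamma\cdot\chp{Y}{Z}$, and verifying the three axioms via the conormal sequence, Whitney, and the projection formula --- is the natural one.

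One small point worth flagging: you invoke that $\Rtr(Z)$ is graded in order to choose a homogeneous lift $\gamma$ supported in degrees $0,\dots,\codim_Y X$ with $\gamma_0=1$. The paper's appendix only says ``a subring $\Rtr(Z)\subseteq\CH(Z)$'' without explicitly requiring gradedness, so strictly speaking this is an added hypothesis. It is, however, satisfied in every application in the paper (the divisorial tautological rings $\trd{B}$ are manifestly graded), and the degree-$0$ condition $\gamma_0=1$ is automatic once $Z$ is connected since $\CH^0$ is then one-dimensional and $1\in\Rtr(Z)$. So the gap is purely cosmetic, but if you want the statement at the generality the paper writes it, you might either add ``graded'' to the standing hypotheses on $\Rtr$ or note that the lift can be truncated within $\Rtr$.
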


\subsection{Intersection theory of blowups}
Let $W\subseteq Z$ be a regular embedding of \DM stacks and consider the blowup $\ptf{Z}$ of $Z$ in $W$ with exceptional divisor $\ptf{W}$.
\[
\begin{tikzcd}
\ptf{W} \ar[r,"j"]\ar[d,"\pi'",swap] & \ptf{Z}\ar[d,"\pi"]\\
W\ar[r,"i", swap] & Z
\end{tikzcd}
\]
Let $s(W,Z)=\sum_i s_i(\nb{W}{Z})$ be the total Segre class of the normal bundle $\nb{W}{Z}$.
By the birational invariance of Segre classes we
 have
\[
s(W,Z)=\pi_{\ptf{W},*}s(\ptf{W},\ptf{Z})=\sum_{i\geq 0}\pi'_{*}(\calO(-1)^i\cap [\ptf{W}])=
\sum_{i\geq 0}s_{i-r+1}(\calN_{W/Z})
\]
If $W$ has a \CHP in $Z$, then there exists classes $\cl{s}_i(\nb{W}{Z})\in\CH(Z)$ which restrict to $s_i(\nb{W}{Z})$ on $X$. We can then write

\begin{equation}\label{eq:pfE}
\pi_*[\ptf{W}]^i=(-1)^{i-1}\overline{s}_{i-\codim(W)}(\calN_{W/Z})\cap [W] \text{ for } i\geq 1.
\end{equation}

We have the following useful consequence.
\begin{proposition}
Let $R^*(Z)\subseteq\CH(Z)$ be a tautological ring
and $W\subseteq Z$ regularly embedded.
Furthermore, suppose $\chp{W}{Z}\in \Rtr(Z)$.
Consider the blowup $\pi:\ptf{Z}\to Z$  of $Z$ in $W$. Then $\pi_*$ preserves $R^*$, i.e.,
\[
\pi_*(R^*(\ptf{Z})) = R^*(Z).
\]
\end{proposition}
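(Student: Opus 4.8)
The plan is to combine the explicit description $\Rtr(\ptf{Z})=\pi^*\Rtr(Z)[\ptf{W}]$ with the projection formula \eqref{eq:projection} and the pushforward formula \eqref{eq:pfE} for powers of the exceptional divisor. The whole argument is essentially formal once those two ingredients are in place, so I expect no serious obstacle; the only step that genuinely uses the hypothesis $\chp{W}{Z}\in\Rtr(Z)$ is the verification that Segre classes of $\nb{W}{Z}$ can be represented over $\Rtr$.

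First I would dispatch the inclusion $\Rtr(Z)\subseteq\pi_*\Rtr(\ptf{Z})$. For $\alpha\in\Rtr(Z)$ the pullback $\pi^*\alpha$ lies in $\Rtr(\ptf{Z})$ by definition, and since $\pi$ is proper and birational of degree one we have $\pi_*[\ptf{Z}]=[Z]$; the projection formula \eqref{eq:projection}, applied with $\pi^*\alpha\cdot[\ptf{Z}]=\pi^*\alpha$, then gives $\pi_*\pi^*\alpha=\alpha$, so $\alpha\in\pi_*\Rtr(\ptf{Z})$. (The identity $\pi^*[Z]=[\ptf{Z}]$ coming from the lemma on local complete intersection morphisms can be cited here too, but is not strictly needed.)

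For the reverse inclusion I would write a general element of $\Rtr(\ptf{Z})$ as a finite sum $\beta=\sum_{i\ge 0}\pi^*(\alpha_i)\cdot[\ptf{W}]^i$ with $\alpha_i\in\Rtr(Z)$ — this is the general form of an element of $\pi^*\Rtr(Z)[\ptf{W}]$ since $\pi^*$ is a ring homomorphism — and then push forward term by term. Using the projection formula on each term together with \eqref{eq:pfE},
\[
\pi_*\beta=\alpha_0+\sum_{i\ge 1}(-1)^{i-1}\,\alpha_i\cdot\bigl(\cl{s}_{i-\codim W}(\nb{W}{Z})\cap[W]\bigr),
\]
so it remains only to see that each $\cl{s}_{i-\codim W}(\nb{W}{Z})\cap[W]$ lies in $\Rtr(Z)$. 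This is where $\chp{W}{Z}\in\Rtr(Z)$ is used: the top component of the \CHP equals $[W]$ and $i^*\chp{W}{Z}=c(\nb{W}{Z})$, so the lower components of $\chp{W}{Z}$ are classes $\cl{c}_j(\nb{W}{Z})\in\Rtr(Z)$ restricting to the Chern classes of $\nb{W}{Z}$; formally inverting $\sum_j\cl{c}_j(\nb{W}{Z})$ inside the subring $\Rtr(Z)$ yields classes $\cl{s}_j(\nb{W}{Z})\in\Rtr(Z)$ restricting to the Segre classes, and multiplying by $[W]\in\Rtr(Z)$ stays in $\Rtr(Z)$. Hence $\pi_*\beta\in\Rtr(Z)$, and the two inclusions give equality.

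Finally I would record, once, that all the stacks involved are Alexander stacks — $Z$ and $W$ by hypothesis, $\ptf{Z}$ by the proposition on blowups of Alexander stacks along regular embeddings — so that the intersection product, the projection formula \eqref{eq:projection}, the operator identity $\pi^*T_\alpha=T_{\pi^*\alpha}$, and the identity \eqref{eq:pfE} are all legitimate in this setting. The step I expect to need the most care in the write-up (though it is not deep) is the bookkeeping in the previous paragraph: namely that, despite the ambiguity of a \CHP modulo $\ker(i^*\colon\CH(Z)\to\CH(W))$, one may consistently choose $\cl{c}_j(\nb{W}{Z})$ and hence $\cl{s}_j(\nb{W}{Z})$ inside $\Rtr(Z)$ so that \eqref{eq:pfE} holds as an identity in $\CH(Z)$.
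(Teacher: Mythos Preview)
Your proposal is correct and follows essentially the same approach as the paper: the paper also uses $\pi_*\pi^*=\id$ for one inclusion, invokes the definition $\Rtr(\ptf{Z})=\pi^*\Rtr(Z)[\ptf{W}]$, and reduces the other inclusion to the formula \eqref{eq:pfE} for $\pi_*[\ptf{W}]^i$. Your write-up is more explicit about the projection formula, the passage from Chern to Segre classes inside $\Rtr(Z)$, and the Alexander-stack hypotheses, but there is no substantive difference in strategy.
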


\begin{proof}

From $\pi_*\pi^*=\id$ it follows that $\pi_*(R^*(\ptf{Z}))\supseteq R^*(Z)$. On the other hand we  have by definition that $R^*(\ptf{Z})[\ptf{W}]$, thus it suffices to show that $\pi_*[\ptf{W}^i]\in R^*(Z)$, which is the content of \cref{eq:pfE}.
\end{proof}

\subsection{Local equations for proper transforms}
\label{sec:equationproper}Suppose $W, V\subseteq Z$ intersect algebraically cleanly in $Z$. Let $\ptf{Z}=\Bl_{W} Z$ and $X= W\cap V$.
Locally we can find a regular sequence $x_1,\ldots,x_n,y_1,\ldots,y_m$ such that the ideals of $X$ and $V$ are
$\calI(W)=(x_1,\ldots,x_n),\, \calI(V)=(x_1,\ldots,x_d, y_1,\ldots, y_m)$ with $d\leq n$.

Furthermore, the blowup $\ptf{Z}=\Bl_X Z$ is locally defined in  $Z\times\PP^{n-1}$ by the equations 
\[
\{ x_iT_j=x_jT_i \text{ for } 1\leq i<j\leq n\}.
\]
The proper transform $\ptf{V}$ of $V$ can be identified with $\Bl_{W\cap V} V$.
In the affine chart where $T_i\neq 0$, the equation for the exceptional divisor $\ptf{W}$ is $z_i=0$.
Local equations for the proper transform $\widetilde{V}$ in  $Z \times \PP^{n-1}$ are given by
\begin{equation} \label{prop:equationproper}
\calI(\widetilde{V})=(T_1,\ldots,T_d, y_1,\ldots,y_m),
\end{equation}
where $T_1,\ldots,T_d$ are projective coordinates.
In particular $\ptf{V}\subset\ptf{Z}$ is regularly embedded. (For details see \cite[Lemma 4.1]{Aluffi}).

\subsection{Total Chern classes of proper transforms}
Our next goal is the computation of classes and normal bundles for proper transforms along a blowup.
In the case $W\subseteq V\subseteq Z$ a formula is found in \cite{Fulton}, and was generalized to the general situation in \cite{Aluffi}.

The main result is that the class  of the proper transform $\ptf{V}$ as well as its normal bundle can be computed from the classes of  $W,V,V\cap W$ as well as their normal bundles.

\begin{corollary}\label{cor:3outof4}
Suppose $V$ and $W$ intersect algebraically cleanly in $Z$. Let $\ptf{Z}=\Bl_W Z$ and $\ptf{V}= \Bl_{X} V, X= V\cap W$.

Furthermore assume that 
\[
\chp{X}{V},\, \chp{X}{W},\,\chp{W}{Z}
\]
are defined over $R^*$.

 Then $\chp{V}{Z}\in R^*(Z)$ if and only if $\chp{\ptf{V}}{\ptf{Z}}\in R^*(\ptf{Z})$.

\end{corollary}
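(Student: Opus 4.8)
The plan is to reduce both implications to the explicit blow-up formula of \cite{Aluffi} (generalizing \cite[Ch.~6]{Fulton}), which computes the \CHP of a proper transform when the blow-up center meets the subscheme cleanly. By the local equations of \Cref{sec:equationproper} the proper transform $\ptf{V}\subseteq\ptf{Z}$, the exceptional divisor $\ptf{W}\subseteq\ptf{Z}$, and the embeddings of $X=V\cap W$ into $V$ and into $W$ are all regular, so every total Chern class in sight is defined. Aluffi's result provides a universal identity
\[
\chp{\ptf{V}}{\ptf{Z}}\;=\;F\!\left(\pi^{*}\chp{V}{Z},\ \pi^{*}\chp{W}{Z},\ \chp{X}{V},\ \chp{X}{W},\ [\ptf{W}]\right),
\]
where $F$ is built from sums, products, multiplicative inverses of classes of the form $1+(\text{higher degree})$, pullbacks along $\pi$, along $\pi'\colon\ptf{W}\to W$, and along $\ptf{V}\cap\ptf{W}\to\ptf{V}$, and the pushforward $j_{*}$ along $\ptf{W}\hookrightarrow\ptf{Z}$. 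The first and most important step is to transcribe this into the ``total Chern class of a regular embedding'' bookkeeping of \Cref{sec:chp} and to confirm the structural point that $\chp{V}{Z}$ enters $F$ only through $\pi^{*}\chp{V}{Z}$ and does so invertibly, so that $F$ may be re-solved as $\pi^{*}\chp{V}{Z}=G\!\left(\chp{\ptf{V}}{\ptf{Z}},\ \pi^{*}\chp{W}{Z},\ \chp{X}{V},\ \chp{X}{W},\ [\ptf{W}]\right)$ for a universal $G$ of the same type.

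Granting this, the forward implication is pure substitution. If $\chp{V}{Z}\in\Rtr(Z)$, then $\pi^{*}\chp{V}{Z}$, $\pi^{*}\chp{W}{Z}$ and $[\ptf{W}]=-E$ all lie in $\Rtr(\ptf{Z})=\pi^{*}\Rtr(Z)[E]$; and the contributions of $\chp{X}{V}$ and $\chp{X}{W}$ to $F$ lie in $\Rtr(\ptf{Z})$ because they are obtained by pulling these $\Rtr$-classes back (to $\ptf{V}\cap\ptf{W}$, resp.\ $\ptf{W}$) and then pushing forward along $j$, and both operations preserve $\Rtr$: pullback since $\Rtr$ on blow-up strata is defined by pullback, and $j_{*}$ since $\Rtr(\ptf{W})=j^{*}\Rtr(\ptf{Z})$ and $j_{*}j^{*}(\,\cdot\,)=(\,\cdot\,)\cdot[\ptf{W}]$ with $[\ptf{W}]\in\Rtr(\ptf{Z})$. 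As $\Rtr(\ptf{Z})$ is a ring closed under the inverses occurring in $F$, we conclude $\chp{\ptf{V}}{\ptf{Z}}=F(\cdots)\in\Rtr(\ptf{Z})$.

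For the converse I would push the inverted identity down to $Z$. Assuming $\chp{\ptf{V}}{\ptf{Z}}\in\Rtr(\ptf{Z})$, the same considerations give $G(\cdots)\in\Rtr(\ptf{Z})$, and Aluffi's identity reads $\pi^{*}\chp{V}{Z}=G(\cdots)$ for an appropriate \CHP of $V$ in $Z$; applying $\pi_{*}$ and using $\pi_{*}\pi^{*}=\id$ gives $\chp{V}{Z}=\pi_{*}G(\cdots)$. Finally $\pi_{*}$ carries $\Rtr(\ptf{Z})$ into $\Rtr(Z)$, by the blow-down statement established above (valid because $\chp{W}{Z}\in\Rtr(Z)$), which in turn rests on $\pi_{*}[\ptf{W}]^{\,i}=(-1)^{i-1}\cl{s}_{\,i-\codim(W)}(\nb{W}{Z})\cap[W]\in\Rtr(Z)$ from \cref{eq:pfE}, the Segre classes $\cl{s}_{\bullet}(\nb{W}{Z})$ being the inverse of the Chern part of $\chp{W}{Z}$. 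Therefore $\chp{V}{Z}=\pi_{*}G(\cdots)\in\Rtr(Z)$.

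The step I expect to be the real obstacle is the first one, not the subsequent manipulations. One must extract Aluffi's formula from \cite{Aluffi,Fulton}, where it is phrased through Segre classes and Chern classes of individual normal bundles, and rewrite it in the \CHP normalization of \Cref{sec:chp}; and one must verify the invertibility claim, namely that $F$ depends on $\chp{V}{Z}$ only through the algebraically invertible term $\pi^{*}\chp{V}{Z}$, with no hidden occurrences elsewhere. Once that is settled, both implications are exactly the substitution and the push-forward above, the only remaining subtlety being the $\ker(i^{*})$- and $\ker(j^{*})$-ambiguities intrinsic to \CHPs, which are harmless since a \CHP is only required to exist.
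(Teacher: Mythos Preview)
Your approach is essentially the paper's, and it is correct. Both proofs reduce to Aluffi's blow-up formula and the observation that the dependence on $\chp{V}{Z}$ can be isolated and inverted; the converse then follows by pushing down along $\pi$ using $\pi_*\pi^*=\id$ and the fact that $\pi_*$ preserves $\Rtr$ (your appeal to \cref{eq:pfE} is exactly what the paper uses).

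Two small refinements are worth noting. First, the paper makes your ``invertibility'' claim sharper: in \Cref{prop:propertransform} it shows that the dependence is in fact \emph{additive}, namely the differences $[\ptf V]-\pi_Z^*[V]$ and $c(\nb{\ptf V}{\ptf Z})-\pi_V^*c(\nb{V}{Z})$ are expressible purely in terms of $\chp{X}{V}$, $\chp{X}{W}$, $\chp{W}{Z}$ and $[\ptf W]$. This immediately dispenses with your worry about hidden occurrences of $\chp{V}{Z}$ and the need to invert power series. Second, for the converse the paper separates the class $[V]$ (living on $Z$) from the normal-bundle Chern classes (living on $V$): the former is recovered via $\pi_{Z,*}$ and Segre classes of $\nb{W}{Z}$, the latter via $\pi_{V,*}$ and Segre classes of $\nb{X}{V}$. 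Your write-up only invokes $\pi_*\colon\CH(\ptf Z)\to\CH(Z)$ and $\chp{W}{Z}$; to be complete you should also note that $\pi_{V,*}\colon\Rtr(\ptf V)\to\Rtr(V)$ preserves $\Rtr$, which follows from the same blow-down argument using the hypothesis $\chp{X}{V}\in\Rtr$. With these two points made precise, your proof coincides with the paper's.
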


In fact we we will prove an explicit formula for the \CHP $\chp{\ptf{V}}{\ptf{Z}}$  from which \Cref{cor:3outof4} follows.

We start by setting up the notation for Aluffi's formula \cite{Aluffi}.
As before we consider the blowup $\ptf{Z}$ of $Z$ in $W$ with exceptional divisor $\ptf{W}$.
Furthermore $V$ and $W$ intersect cleanly in $Z$ and we set $X= V \cap W$. The exceptional divisor is $\ptf{W}$ and $\ptf{V}$ is the proper transform of $V$ is $\ptf{V}$. The situation is summarized in the following diagram.

\setlength{\perspective}{8pt}
\[\begin{tikzcd}[row sep={45,between origins}, column sep={40,between origins}]
      &[-\perspective] \ptf{X} \ar["\ptf{i}"]{rr}\ar["\ptf{j}" near end,swap]{dd}\ar["\pi_X",swap]{dl} &[\perspective] &[-\perspective] \ptf{V} \ar["\ptf{k}"]{dd}\ar["\pi_V", swap]{dl} \\[-\perspective]
    X \ar[crossing over,"i"]{rr} \ar["j", swap]{dd} & & V \\[\perspective]
      & \ptf{W}  \ar["\ptf{l}"]{rr} \ar["\pi_W"]{dl} & &  \ptf{Z} \ar["\pi_Z"]{dl} \\[-\perspective]
   W \ar["l",swap]{rr} && Z \ar[from=uu,crossing over,"k" near start]
\end{tikzcd}\]

We define the classes
\[
\begin{split}
C&= \sum_m C_m:=j^*c(\nb{W}{Z})c(\nb{X}{V})^{-1}\in \CH(X),\\
N&=\sum_n N_n:=c(\nb{X}{W})\in \CH(X)
\end{split}
\]

If we formally assume that $C$ and $N$ are the pullbacks of the total Chern class of  vector bundles $\mathbb{C},\mathbb{N}$ on $V$, then we can expand 
\[
 c(\mathbb{N})\cdot c(\mathbb{C}\otimes \calO(-\ptf{X}))= c(\mathbb{N})\cdot c(\mathbb{C}) +  Q(c_m(\mathbb{N}),c_n(\mathbb{C}),[\ptf{X}]) \cdot \ptf{X}\in\CH(V).
\]
for a polynomial $Q= \sum_k Q_k(c_m(\mathbb{N}),c_n(\mathbb{C}))[\ptf{X}]^k$.
While in general we cannot ensure the existence of a vector bundle, the polynomial $Q$ still appears in the following formula.

In the next proposition we omit some pullback maps or sometimes write $\alpha_{|V}$ for the pullback along an inclusion $V\subset V', \alpha\in \CH(V')$.

\begin{proposition}\label{prop:propertransform}Let $Z$ be an Alexander stack.  Assume $V,W\subseteq Z$ intersect cleanly and let $X= V\cap W$.
Furthermore, suppose that $
\chp{X}{W},\chp{X}{V}, \chp{W}{Z},
$
are all defined over $\Rtr$.

Let $\cl{C},\cl{N},\overline{X}\in \Rtr(Z)$ such that
\[
\cl{N}_{|X} = N, \cl{C}_{|X} = C\in\Rtr(X), \overline{X}_{|W} = X\in \Rtr(W).
\]

The class of the proper transform $\ptf{V}$ of $V$ has the following form
\[
[\ptf{V}] - \pi_Z^*[V] = - \sum_{k=1}^{d-e} \cl{C}_{d-e-k} \cdot \cl{X} \cap (-[\ptf{W}]^k)\in R^*(\ptf{Z})
\]
where $d=\codim_Z V,\, e= \codim_{W} X$.

For the normal bundle of the proper transform we have
\[
c(\nb{\ptf{V}}{\ptf{Z}}) - \pi_V^*c(\nb{V}{Z}) = \left[Q( \cl{N}_m, \cl{C}_n, [\ptf{W}]) \cap \ptf{W}\right]_{|\ptf{V}} \in  \Rtr(\ptf{V}).
\]
\end{proposition}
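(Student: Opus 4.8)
The plan is to obtain both displayed identities from Aluffi's blow-up formulas \cite{Aluffi}, recast in the language of \CHPs, and then to verify that under the standing hypotheses every class occurring admits a lift into $\Rtr$. I would begin by recording the geometric input from \Cref{sec:equationproper}: by the explicit local equations of \cref{prop:equationproper}, the proper transform $\ptf{V}$ is regularly embedded in $\ptf{Z}$ of codimension $d=\codim_Z V$, it is canonically the blowup $\Bl_X V$, and its exceptional divisor $\ptf{X}$ satisfies $[\ptf{X}]=\ptf{k}^*[\ptf{W}]$, so that it is cut out on $\ptf{V}$ by the exceptional divisor of $\ptf{Z}$. Moreover $\pi_X\colon\ptf{X}\to X$ is the projectivized normal bundle $\PP(\nb{X}{V})$, embedded inside $\pi_W^{-1}(X)=\PP(\nb{W}{Z}|_X)\subseteq\ptf{W}$ as the vanishing of the $d-e$ ``excess'' directions, consistently with the fact that $\nb{X}{V}\subseteq\nb{W}{Z}|_X$ has corank $d-e$. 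Since every stack in the diagram is Alexander and operational and ordinary Chow are identified (our standing convention), the full formal calculus of Gysin homomorphisms, proper pushforward and l.c.i.\ pullback from \cite{Fulton} is available, so Aluffi's arguments apply verbatim in our setting.

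Next I would assemble the two formulas. For the cycle class, $[\ptf{V}]-\pi_Z^*[V]$ is supported on $\ptf{W}$, and Aluffi's formula realises it as a pushforward to $\ptf{Z}$ of a universal polynomial in the Chern classes of $\nb{W}{Z}$, $\nb{X}{V}$ and the exceptional class; rewriting the pushforwards $\pi_{W*}[\ptf{W}]^k$ through the birational-invariance identity \cref{eq:pfE} and then substituting any lift to $\Rtr(Z)$ puts the result into the stated shape $-\sum_{k=1}^{d-e}\cl{C}_{d-e-k}\cdot\cl{X}\cap(-[\ptf{W}]^k)$, where $C=j^*c(\nb{W}{Z})\,c(\nb{X}{V})^{-1}$ is the class of the statement and $\cl{X}$ is a lift of $[X]\in\CH(W)$ to $\Rtr(Z)$ (note that $C$ is the total Chern class of the rank-$(d-e)$ object $\nb{W}{Z}|_X/\nb{X}{V}$, which is why only the $k\le d-e$ terms survive). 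For the normal bundle I would compare the conormal sequence of $\ptf{V}\subseteq\ptf{Z}$ with the $\pi_V$-pullback of the conormal sequence of $V\subseteq Z$, twisted by $\calO(-\ptf{X})$ in the exceptional directions; the discrepancy between the total Chern classes is precisely the $[\ptf{X}]$-linear part of the formal product $c(\mathbb{N})\,c(\mathbb{C}\otimes\calO(-\ptf{X}))$ used just before the statement to define $Q$, evaluated now on the genuine normal Chern classes $N=c(\nb{X}{W})$ and $C$ and on $[\ptf{W}]$, and then restricted to $\ptf{V}$.

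Finally I would carry out the $\Rtr$-bookkeeping, which is the real content. Aluffi's formulas naturally live on $X$ and on $V$, so one must lift them to $\Rtr(Z)$ and check the lifts restrict correctly; the lifts are unique only modulo $\ker(i^*\colon\CH(Z)\to\CH(X))$, which is harmless because \CHPs are themselves defined only up to that kernel (as noted after \Cref{sec:chp}). The normal Chern classes $c(\nb{X}{W})$, $c(\nb{X}{V})$ and $c(\nb{W}{Z})|_X$ are read off from the hypothesised \CHPs $\chp{X}{W}$, $\chp{X}{V}$, $\chp{W}{Z}$ over $\Rtr$; the class $[X]\in\CH(W)$ lifts because $\chp{X}{W}$ does; and every residual pushforward $\pi_{Z*}[\ptf{W}]^k$, resp.\ $\pi_{V*}[\ptf{X}]^k$, produced by the expansion is a lift of a Segre class of $\nb{W}{Z}$, resp.\ $\nb{X}{V}$, by \cref{eq:pfE}, hence again lies in $\Rtr$. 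Feeding the chosen lifts $\cl{C},\cl{N},\cl{X}$ into the two displays therefore yields elements of $\Rtr(\ptf{Z})$ and $\Rtr(\ptf{V})$ with the prescribed restrictions, which is the assertion, and \Cref{cor:3outof4} is then immediate, since both right-hand sides involve only $\chp{X}{W}$, $\chp{X}{V}$, $\chp{W}{Z}$ and $[\ptf{W}]$. The main obstacle — and the step that deserves the most care — is exactly this translation: matching indices and signs between Aluffi's Segre-class formulation on the blowup centers and the $[\ptf{W}]^k$-expansion over $\CH(\ptf{Z})$, and justifying that the formal bundles $\mathbb{C},\mathbb{N}$, which need not exist globally, may be replaced throughout by their total Chern classes without altering the output modulo the relevant restriction kernel.
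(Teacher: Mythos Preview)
Your approach is essentially the same as the paper's: both formulas are pulled from the literature and then the lifts $\cl{C},\cl{N},\cl{X}$ are substituted using the projection formula. Two small corrections are worth making. First, the cycle-class identity is Fulton's blow-up formula \cite[Thm.~6.7]{Fulton}, not Aluffi; the paper starts from
\[
[\ptf{V}] = \pi_Z^*[V] + \ptf{l}_*\Bigl\{\tfrac{c(\pi_W^*\nb{W}{Z})}{c(\calO(\ptf{W}))} \cap \pi_W^*\bigl(c(\nb{X}{V})^{-1}\cap [X]\bigr)\Bigr\}^{d-1}
\]
and then replaces the Chern data on $X$ by $\ptf{l}^*\pi_Z^*\cl{C}$ and $\ptf{l}^*\pi_Z^*\cl{X}$, after which the projection formula for the closed embedding $\ptf{l}\colon\ptf{W}\hookrightarrow\ptf{Z}$ converts each $\ptf{l}_*\bigl(\ptf{l}^*(\cdots)\cap(-c_1(\calO(\ptf{W})))^{k-1}\bigr)$ directly into $(\cdots)\cap(-[\ptf{W}])^k$. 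In particular the computation never leaves $\CH(\ptf{Z})$: you do not push down via $\pi_W$ and invoke \cref{eq:pfE} as your sketch suggests, since the target formula lives on $\ptf{Z}$ and contains the classes $[\ptf{W}]^k$, which are not pullbacks from $Z$. Second, for the normal bundle the paper simply cites \cite[Thm.~4.2]{Aluffi} and then applies the same lift-and-projection-formula manoeuvre; your conormal-sequence outline is essentially a reproof of that theorem, which is fine but unnecessary.
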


\begin{proof}
By \cite[Thm 6.7]{Fulton} we have
\begin{equation}\label{eq:properFulton}
[\ptf{V}] = \pi_Z^*[V] + \ptf{l}_*\left\{\dfrac{c(\pi_W^*\nb{W}{Z})}{c(\calO(\ptf{W}))} \cap \pi_W^* (c^{-1}(\nb{X}{V})\cap [X])\right\}^{d-1}
\end{equation}

By assumption we have
 $c(\nb{W}{Z})c^{-1}(\nb{X}{Y}) = j^*l^*\cl{C}, [X] = l^*\cl{X}$ for $\cl{C},\cl{X} \in \Rtr(Z)$.

We can now rewrite \cref{eq:properFulton} as
\[
\begin{split}
[\ptf{V}] &= \pi_Z^*[V]  +\ptf{l}_* \left\{ \dfrac{\ptf{l}^*\pi_Z^*\cl{C}}{ c(\calO(\ptf{W}))} \cap \ptf{l}^*\pi_Z^*\cl{X}\right\}^{d-1}\\
& =
 \pi_Z^*[V]  + \ptf{l}_*\sum_{k=1}^{d-e} \ptf{l}^*\cl{C}_{d-e-k}\cdot \ptf{l}^*\cl{X}\cap (-c_1(\calO(\ptf{W}))^{k-1}\\
 &=
\pi_Z^*[V]  -\sum_{k=1}^{d-e} \cl{C}_{d-e-k}\cdot \cl{X}\cap (-[\ptf{W}])^{k}\in\CH[d](\ptf{Z})\\
\end{split}
\]
where $e=\codim_W X$.
In particular the difference $[\ptf{V}]-\pi_Z^*[V]$ lies in  $R^*(\ptf{Z})$.

We now proceed with the normal bundle of the proper transform.
 By \cite[Thm. 4.2]{Aluffi} we can write
\[
\begin{split}
c(\nb{\ptf{V}}{\ptf{Z}})\cap\alpha -  \pi_V^*(\nb{V}{Z})\cap\alpha  &= \ptf{i}_*\left[Q( \pi_X^*c_{m}(\nb{X}{W}), \pi_X^*C_{n}, c_1(\calO_{\ptf{X}}(-1)) \cap \ptf{i}^* \alpha\right]\\=&
\sum_{k=0} \ptf{i}_*\left[Q_{k}(  \cl{N}_m, \cl{C}_{n}) \cdot c_1(\calO_{\ptf{X}}(-1))^k \cap \ptf{i}^*\alpha\right]\\
&=  \sum_{k=0} Q_{k}(\cl{N}_m,\cl{C}_{n}) \cdot [\ptf{X}]^{k+1} \cap \alpha \\
&=  \left[\sum_{k=0} \left(Q_{k}(\cl{N}_m,\cl{C}_{n}) \cdot [\ptf{W}]^{k+1}\right) \cap \alpha \right]_{|\ptf{V}}. 
\end{split}
\]

\end{proof}

\begin{proof}[Proof of \Cref{cor:3outof4}]
By \Cref{prop:propertransform}
the differences
\[
[\ptf{V}] - \pi_Z^*[V] ,\, c(\nb{\ptf{V}}{\ptf{Z}}) - \pi_V^*\nb{V}{Z}
\]
are both defined over $R^*$ and can both be expressed solely in terms of the \CHPs $\chp{X}{V},\chp{X}{W},\chp{W}{Z}$ and the exceptional divisor $\ptf{W}$.
In particular, if $\chp{V}{Z}$ is defined over $\Rtr$ the same is true for $\chp{\ptf{V}}{\ptf{Z}}$.

For the second claim we recall that $\pi_{Z,*}\pi_Z^*=\id, \pi_{V,*}\pi_V^*=\id$
and furthermore that
$\pi_{Z,*}[\ptf{W}]^k$ can be computed in terms of the Segre classes $s(W,Z)= c^{-1}(\nb{W}{Z})\cap [W]$. Similarly
$\pi_{V,*}[\ptf{X}]^k$ can be computed in terms of the Segre classes  $s(X,V)= c^{-1}(\nb{X}{V})\cap [X]$. All these classes can be read off from the \CHPs $\chp{W}{Z}$ and $\chp{X}{V}$.
\end{proof}

\subsection{Systems of regular embeddings}
So far we have discussed \CHPs of proper transforms for a single blowup, we now generalize the computation of for a sequence of blow ups.
Roughly speaking, if we blowup $Z$ iteratively in a sequence of subspaces $W_1,\ldots,W_n$ and can compute the \CHPs of $W_i$ as well as the \CHP of all finite intersections of different $W_j$, then we can compute the \CHP of the proper transform of some subspace $V$, provided we also know the \CHP of all intersections $V\cap W$ where $W$ is any finite intersection of varieties $W_i$.

\begin{definition}
A  collection $S$ of substacks of $Z$ is called a {\em system of regular embeddings} in $Z$ if
\begin{itemize}
\item every stack $Y\in S$ is an Alexander stack,
\item $Z\in S$ and $S$ is closed under finite intersections
\item For any $A,B,C \in S$ with $A, B\subseteq C$ the intersection of $A$ and $B$ inside $C$ is clean.
\end{itemize}

A {\em building set} $B=(W_1,\ldots,W_n), W_i\in S$ is an ordered list of subspaces such that $W_i$ and $W_j$ are not contained in each other for $i\neq j$.
\end{definition}

Starting with a system $S$ of regular embeddings in $Z$ and a building set $B\subseteq S$ we construct an iterative blowup $\ptf{Z}$ as follows.
We start by setting \[
\ptf{Z}_0 = Z, S_0=S, B_0 =B, \ptf{Z}_0:=Z_0, \ptf{W}^{(0)}=W \text{ for all } W\in S_0.
\] 

Suppose we have already constructed  $\ptf{Z}_i, \ptf{W}^{(i)},S_i,B_i$.
We then set $\ptf{Z}_{i+1} = \Bl_{W_{i+1}^{(i)}} \ptf{Z}_i$.
For any subspace $W^{(i)}\in S_i$ we let  $W^{(i+1)}$ be the proper transform of $W^{(i)}$ if $W^{(i)}$ is not contained in $W_{i+1}^{(i)}$ and the preimage under the blowup otherwise.
We call $W^{(i+1)}$ the $i$-th iterated transform of $W$.

Afterwards we define
\[
S_{i+1}:=\{ \ptf{W}^{(i+1)}\,:\, \ptf{W}^{(i)}\in S_i, W\not\subseteq W_{i+1}\},\quad B_{i+1} = (\ptf{W}^{(i+1)}_{i+2},\ldots,\ptf{W}^{(i+1)}_n).
\]

We call the resulting space $\ptf{Z}:=\ptf{Z}_n$ the iterated blowup associated to $B$.

\begin{proposition}\label{prop:regular-system-blowup}
Suppose $S$ is system of regular embeddings in $Z$ with building set $B$.

For any $i$ the equality 
\[
\ptf{A}_1^{(i)} \cap \ptf{A}_2^{(i)}= \ptf{A_1\cap A_2}^{(i)} 
\]
holds for all $\ptf{A}_1, \ptf{A}_2 \in S$.
If $A_1$ and $A_2$ intersect algebraically transversely, then the same is true for $\ptf{A}_1^{(i)},\ptf{A}^{(i)}_2$.

Furthermore, for every $i$ the collection of subspaces
$S_i$ is a system of regular embeddings in $\ptf{Z_i}$ and $B_i$ is a  building set.

\end{proposition}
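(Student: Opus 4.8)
The plan is to prove all three assertions simultaneously by induction on the number of blow-ups performed, i.e.\ on $i$. For $i=0$ there is nothing to prove: $S_0=S$ is a system of regular embeddings by hypothesis, $B_0=B$ is a building set, and $\ptf{A}_1^{(0)}\cap\ptf{A}_2^{(0)}=A_1\cap A_2=\ptf{A_1\cap A_2}^{(0)}$ is the assumption that $S$ is closed under finite intersection. So assume everything has been established for $S_i$ inside $\ptf{Z}_i$, and we pass to $\ptf{Z}_{i+1}=\Bl_{\ptf{W}_{i+1}^{(i)}}\ptf{Z}_i$.

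The heart of the argument is the compatibility of proper transforms with intersection. Write $C:=\ptf{W}_{i+1}^{(i)}$ for the current center. Given $\ptf{A}_1^{(i)},\ptf{A}_2^{(i)}\in S_i$, I would distinguish cases according to which of $A_1,A_2$ are contained in $C$. If neither is contained in $C$, then working \'etale-locally and using the local description of proper transforms from \Cref{sec:equationproper} (equation \eqref{prop:equationproper}): choosing a regular sequence adapted simultaneously to $C$, $A_1^{(i)}$, and $A_2^{(i)}$ — which exists precisely because all three lie in $S_i$ and the induction hypothesis guarantees $S_i$ is a system of regular embeddings in $\ptf{Z}_i$, so their pairwise intersections inside any common ambient member are clean — the equations of the proper transform are obtained by replacing certain of the $x_j$ by projective coordinates $T_j$. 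Since $A_1^{(i)}\cap A_2^{(i)}=(A_1\cap A_2)^{(i)}$ by induction, and the substitution $x_j\rightsquigarrow T_j$ is performed consistently, the ideal of $\ptf{A}_1^{(i+1)}\cap\ptf{A}_2^{(i+1)}$ is exactly the ideal of $\ptf{(A_1\cap A_2)}^{(i+1)}$; moreover this ideal is again generated by (a subset of) a regular sequence, so the intersection is a regular embedding, which is the clean-intersection condition. If one of $A_1,A_2$, say $A_1$, is contained in $C$, then $\ptf{A}_1^{(i+1)}$ is the total transform (preimage) of $A_1^{(i)}$, and again the local equations make the identity $\ptf{A}_1^{(i+1)}\cap\ptf{A}_2^{(i+1)}=\ptf{(A_1\cap A_2)}^{(i+1)}$ transparent, using that the blow-up morphism is flat over $C$ on the exceptional locus. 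The codimension statement is then immediate: in each of these local pictures the number of defining equations is preserved under $x_j\rightsquigarrow T_j$, so if $\codim(A_1\cap A_2)=\codim A_1+\codim A_2$ upstairs it remains so after the blow-up (the exceptional divisor adds one to the dimension of ambient and center alike), giving algebraic transversality.

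Granting the intersection identity, the remaining claims follow formally. Each $\ptf{W}^{(i+1)}\in S_{i+1}$ is an Alexander stack: it is either a proper transform or a total transform of an Alexander stack along a blow-up of an Alexander stack in a regularly embedded Alexander substack, so \Cref{prop:Alexander} together with the proposition on blow-ups of Alexander stacks applies; and $\ptf{Z}_{i+1}\in S_{i+1}$ likewise. Closure of $S_{i+1}$ under finite intersection is the identity just proved (applied repeatedly). The three-term cleanness condition for $\ptf{A},\ptf{B}\subseteq\ptf{C}$ in $S_{i+1}$ reduces, via the same local normal-form analysis, to cleanness of $A,B\subseteq C$ in $S_i$, which holds by induction. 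Finally $B_{i+1}=(\ptf{W}_{i+2}^{(i+1)},\ldots,\ptf{W}_n^{(i+1)})$ is a building set because no two of $W_{i+2},\ldots,W_n$ were contained in one another in $S$, and this non-containment is preserved under proper/total transform (if $\ptf{W}_a^{(i+1)}\subseteq\ptf{W}_b^{(i+1)}$ then pushing forward under the birational blow-up morphism would give $W_a\subseteq W_b$).

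The main obstacle I anticipate is purely bookkeeping rather than conceptual: carefully setting up, in the case where neither center is contained in the blow-up locus, a single \'etale-local regular sequence that is simultaneously adapted to all three members $C$, $A_1^{(i)}$, $A_2^{(i)}$ — this is where the full strength of the "system of regular embeddings" axioms is used, and one must check that the various cleanness hypotheses assemble to produce such a coordinate system (not merely pairwise adapted sequences). Once this normal form is in hand, every assertion in the proposition becomes a transparent manipulation of monomial ideals after the substitution $x_j\rightsquigarrow T_j$, exactly as in \cite[Lemma 4.1]{Aluffi}.
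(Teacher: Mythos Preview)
Your proposal is correct and follows essentially the same approach as the paper: induction on $i$, reduction to an \'etale-local computation near a point of the blow-up center, and the substitution $x_j\rightsquigarrow T_j$ from \Cref{sec:equationproper} to identify the ideals of proper transforms. The paper makes concrete precisely the step you flag as the ``main obstacle'': it constructs the simultaneous regular sequence adapted to $X_1,X_2,X_3$ by lifting a basis of $(\calI_1+\calI_2+\calI_3)/\mathfrak{m}_x(\calI_1+\calI_2+\calI_3)$ and invoking \cite[\href{https://stacks.math.columbia.edu/tag/067N}{Tag 067N}]{stacks-project}, yielding tuples $a,b_{12},b_{13},b_{23},c_1,c_2,c_3$ whose concatenation is regular and which generate the three ideals in the expected combinatorial pattern; after that the verification $\ptf{\calI}_1+\ptf{\calI}_2=\ptf{\calI_1+\calI_2}$ is the one-line monomial computation you describe.
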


\begin{proof}
The proof is by induction on $i$. The case $i=0$ is true by definition.

For the rest of the proof we set $X_1= \ptf{A}_1^{(i-1)},\, X_2= \ptf{A}_2^{(i-1)},\, X_3= W_i^{(i-1)}$.

If we have already proven that $S_{i-1}$ is a system of regular embeddings, then it follows that  $S_i$ is also a system of regular embeddings  since we can explicitly compute regular sequences defining the ideal of the proper transform (see \Cref{prop:equationproper}) and the preimage.

We can reduce the local case where $Z=\Spec R$ and $ X_i, i=1,2,3$ are affine.
Since outside $X_3$ the blowup is an isomorphism, it suffices to check the equality near $x\in X_3$.
Let $\calI_i$ be the ideal defining $X_i$ in a neighborhood of $x$.
We claim that there exists tuples  of regular sequences $a, b_{12},b_{13},b_{23},c_1,c_2,c_3$ such that
the concatenation $(a, b_{12},b_{13},b_{23},c_1,c_2,c_3)$ is a regular sequence in $R$ and 
\begin{gather*}
\calI_1 = (a,b_{12},b_{13},c_1), \quad
\calI_2 = (a,b_{12},b_{23},c_2),\quad
\calI_3 = (a,b_{13},b_{23},c_3),\quad \\
\calI_1+\calI_2 = (a,b_{12},b_{13},b_{23},c_1,c_2),\\
\calI_1+\calI_2+\calI_3 = (a,b_{12},b_{13},b_{23},c_1,c_2,c_3).
\end{gather*}

Let $x\in Z$. To construct such a tuple of  regular sequence in a neighborhood of $x$ we can lift  a suitable  basis for the $\kappa(x)$-vector space $(\calI_1+\calI_2+\calI_3)/\mathfrak{m}_x(\calI_1+\calI_2+\calI_3)$.
By \cite[\href{https://stacks.math.columbia.edu/tag/067N}{Tag 067N}]{stacks-project} the lift is a regular sequence.

Next we can compute the ideal sheaves of the proper transform following \Cref{sec:equationproper}.
For every element $\alpha$ $\calI_3$  we introduce a corresponding projective coordinate $\tilde{\alpha}$.
Let $\ptf{a},\ptf{b}_{13},\ptf{b}_{23},\ptf{c}_3$ be the corresponding tuples of projective coordinates.
As before the sequence $(\ptf{a},b_{12},\ptf{b}_{13},\ptf{b}_{23},c_1,c_2,\ptf{c}_3)$ is a regular sequence on the blowup $\ptf{Z}_i$ and for the ideal sheaves $\ptf{\calI}_i$ of the proper transforms we find
\begin{gather*}
\ptf{\calI}_1 = (\ptf{a},b_{12},\ptf{b}_{13},c_1), \quad
\ptf{\calI}_2 = (\ptf{a},b_{12},\ptf{b}_{23},c_2),\\
\ptf{\calI}_1+\ptf{\calI}_2 = (\ptf{a},b_{12},\ptf{b}_{13},\ptf{b}_{23},c_1,c_2).
\end{gather*}
Therefore $\ptf{\calI}_1+\ptf{\calI}_2 = \ptf{\calI_1+\calI_2}$.

\end{proof}

\begin{definition}
Let $S$ a system of regular embeddings and $B$ a building set.
We say that $S$  {\em has enough \CHPs} in $R^*$ if  for
$V,W \in S$ with $V \subseteq W$ the \CHP  $\chp{V}{W}$ exists and is defined over $\Rtr$.

\end{definition}

\begin{theorem}\label{prop:arrangement}
Let $S$ be a system of regular embeddings that has enough \CHPs and $B$ a building set with iterated blowup $\ptf{Z}\to Z$.

Let $A\subseteq Z$, not necessarily contained in $S$.
We assume that for any $W\in S$  the intersection $W\cap A$ is clean and that all \CHPs
\[
\chp{A\cap W}{A},\chp{A\cap W}{W}, \chp{\ptf{A}}{\ptf{Z}}
\]
are defined over $R^*$.

Then $A$ has a \CHP defined  over $R^*(Z)$ and it can be computed explicitly.
\end{theorem}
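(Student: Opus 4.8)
The plan is to run an induction on the number of blowups $n$ in the iterated blowup $\ptf{Z}\to Z$ associated to the building set $B=(W_1,\ldots,W_n)$, reducing at each stage to a single blowup, where we can invoke \Cref{cor:3outof4} and \Cref{prop:propertransform}. First I would set up the induction: after one blowup $\ptf{Z}_1=\Bl_{W_1}Z$, \Cref{prop:regular-system-blowup} tells us that $S_1$ is again a system of regular embeddings in $\ptf{Z}_1$, that $B_1$ is a building set, and that the iterated blowup of $\ptf{Z}_1$ along $B_1$ reproduces $\ptf{Z}$. The hypotheses of the theorem must be propagated to $(\ptf{Z}_1,S_1,B_1,\ptf{A}^{(1)})$: that $S_1$ has enough \CHPs in $R^*(\ptf{Z}_1)$ follows from \Cref{cor:3outof4} applied to each pair of nested members of $S$, since the relevant \CHPs $\chp{V\cap W_1}{V}$, $\chp{V\cap W_1}{W_1}$, $\chp{W_1}{Z}$ are all defined over $R^*$ by the "enough \CHPs" hypothesis and the cleanness axiom in the definition of a system of regular embeddings; and that the intersections $\ptf{A}^{(1)}\cap \ptf{W}$ are clean for $\ptf{W}\in S_1$ follows from the cleanness statements in \Cref{prop:regular-system-blowup} together with \Cref{prop:equationproper}.

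Second, I would verify that the two "input" \CHPs for the inductive step are available: $\chp{\ptf{A}^{(1)}\cap \ptf{W}}{\ptf{A}^{(1)}}$ and $\chp{\ptf{A}^{(1)}\cap \ptf{W}}{\ptf{W}}$ for $\ptf{W}\in S_1$. These are again instances of \Cref{cor:3outof4} / \Cref{prop:propertransform}: both $\ptf{A}^{(1)}$ and $\ptf{W}$ are proper transforms (or preimages) of $A\cap$(something clean with $W_1$) and of $W$, so the differences $[\ptf{V}]-\pi^*[V]$ and $c(\nb{\ptf{V}}{\ptf{Z}_1})-\pi^*c(\nb{V}{Z})$ are computed by the explicit formula in \Cref{prop:propertransform} in terms of the ground-level \CHPs $\chp{A\cap W_1\cap W}{A\cap W}$, etc., all of which are defined over $R^*$ by hypothesis together with \Cref{prop:chern}. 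By the induction hypothesis applied to $(\ptf{Z}_1,S_1,B_1,\ptf{A}^{(1)})$ — a building set of length $n-1$ — we conclude that $\ptf{A}^{(1)}$ has a \CHP over $R^*(\ptf{Z}_1)$. Finally, one application of \Cref{cor:3outof4} to $A\subseteq Z$, $W_1\subseteq Z$, using that $\chp{\ptf{A}^{(1)}}{\ptf{Z}_1}$ is now defined over $R^*$ and that $\chp{A\cap W_1}{A}$, $\chp{A\cap W_1}{W_1}$, $\chp{W_1}{Z}$ are defined over $R^*$, yields that $\chp{A}{Z}$ is defined over $R^*$. The base case $n=0$ is vacuous, since then $\ptf{Z}=Z$ and the hypothesis directly gives that $\chp{A}{Z}=\chp{\ptf{A}}{\ptf{Z}}$ is defined over $R^*$.

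For the explicit computability statement, I would note that every step above is constructive: \Cref{prop:propertransform} gives closed formulas for the class and the total Chern class of a proper transform in terms of the \CHPs of $W$, $V$, $V\cap W$ and the pushforwards $\pi_*[\ptf{W}]^k$, which by \cref{eq:pfE} are expressed through the Segre classes read off from $\chp{W}{Z}$; unwinding the induction produces an algorithm that assembles $\chp{A}{Z}$ from the ground-level data $\{\chp{A\cap W}{A},\chp{A\cap W}{W},\chp{V\cap W}{V},\ldots\}$ ranging over finite intersections $W$ of members of $B$.

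The main obstacle I expect is bookkeeping rather than a conceptual gap: one must check carefully that after a blowup the members of $S_1$ that happen to be \emph{contained} in $W_1$ (hence replaced by their preimages, not proper transforms) still fit the framework — in particular that the preimage of a clean intersection remains clean and that the relevant \CHPs of preimages are still over $R^*$. This is handled by the local regular-sequence analysis of \Cref{prop:regular-system-blowup} and \Cref{sec:equationproper}, but keeping track of which case each subspace falls into, and checking that the cleanness/transversality hypotheses of \Cref{cor:3outof4} survive each blowup, is where the care is needed.
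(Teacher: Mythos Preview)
Your proposal is correct and follows essentially the same route as the paper: induct on the length of the building set, use \Cref{prop:regular-system-blowup} to ensure that after one blowup $(\ptf{Z}_1,S_1,B_1,\ptf{A}^{(1)})$ again satisfies the hypotheses, apply the inductive hypothesis to obtain $\chp{\ptf{A}^{(1)}}{\ptf{Z}_1}\in R^*$, and then invoke \Cref{cor:3outof4} for the single blowup $\ptf{Z}_1\to Z$. The paper's proof is three lines and leaves exactly this unpacking to the reader; you have written out the details it omits.

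One small correction to your bookkeeping worry: by the paper's definition of $S_{i+1}$, members of $S_i$ that are \emph{contained} in the blowup center $W_{i+1}^{(i)}$ are simply dropped from $S_{i+1}$ rather than carried along as preimages, so you do not need to track \CHPs for them in the inductive step.
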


\begin{proof}
By induction and \Cref{prop:regular-system-blowup} we can reduce to the case of a single blowup, i.e., $B=\{W_1\}$.
Since all \CHPs
\[
\chp{A\cap W_1}{A},\chp{A\cap W_1}{W_1},\chp{W_1}{Z}, \chp{\ptf{A}}{\ptf{Z}}
\]
are defined over $\Rtr$, the claim follows from \Cref{cor:3outof4}.
\end{proof}

\clearpage

\end{document}